\let\oldmarginpar\marginpar
\renewcommand\marginpar[1]{\-\oldmarginpar
{\raggedright\footnotesize #1}}
\newcommand{\lng}{\left\langle}
\newcommand{\rng}{\right\rangle}
\DeclareMathOperator*{\colim}{colim}
\newcommand{\zcb}{\text{\fontfamily{pzc}\selectfont
B}_A
}
\newcommand{\wt}{\widetilde}
\title{Morita Theory For Derived Categories: A Bicategorical Perspective}
\author{Niles Johnson}
\address{University of Chicago\\
Department of Mathematics\\
5734 South University Ave.\\
Chicago, IL 60637}
\urladdr{http://www.math.uchicago.edu/~niles}
\email{niles@math.uchicago.edu}
\begin{document}

\begin{abstract}
  We present a bicategorical perspective on derived Morita theory for
  rings, DG algebras, and spectra.  This perspective draws a
  connection between Morita theory and the bicategorical Yoneda Lemma,
  yielding a conceptual unification of Morita theory in derived and
  bicategorical contexts.  This is motivated by study of Rickard's
  theorem for derived equivalences of rings and of Morita theory for
  ring spectra, which we present in Sections \ref{outline} and
  \ref{proof}.  Along the way, we gain an understanding of the
  barriers to Morita theory for DG algebras and give a conceptual
  explanation for the counterexample of Dugger and Shipley.
\end{abstract}
\maketitle

\section{Introduction}\label{intro}
A bicategorical perspective on Morita theory is rooted in the
observation that Morita theory is a theory of \emph{bi}modules, not
simply left modules or right modules.  To give an incomplete survey,
this perspective has yielded extensions to the theory of distributors
over enriched categories by Fisher-Palmquist and Palmquist
\cite{fisherpalmquist1975mce}, to subfactor theory by M{\"u}ger
\cite{muger2003sca}, to bialgebroids by Szlach{\'a}nyi
\cite{szlachanyi2004mme}, and to von Neumann algebras by Brouwer
\cite{brouwer2003bam}.

A largely disjoint body of work has studied Morita theory in derived
contexts.  This began with the work of Rickard, studying derived
categories of rings in \cite{rickard1989mtd} and
\cite{rickard1991ded}.  Rickard's results were re-treated by Schwede
in \cite{schwede2004mta} following work of Keller in
\cite{keller1994ddc}; cf. \cite[Ch. 8]{konig1998deg} for a very
readable overview.  Dugger, Schwede and Shipley give partial
extensions to ring spectra and differential graded algebras in
\cite{schwede2003smc}, \cite{dugger2007ted}, and related work.
Derived Morita theory for differential graded categories has been
studied by Keller in \cite{keller1994ddc} and To{\"e}n in
\cite{toen2007htd}.  The development of derived Morita theory has
required more delicacy than its bicategorical counterpart, with the
counterexample of \cite{dugger2007ted} being a barrier to expected
generalizations of Rickard's theorem.  The work of Dugger, Shipley,
and To{\"e}n is motivated in part by this unsatisfactory situation.

We present a conceptual unification of bicategorical Morita theory
with Morita theory for derived categories by developing Morita theory
in triangulated bicategories.  In \autoref{context}, we introduce
bicategorical language for those to whom it is unfamiliar, and in
\autoref{trbicat} we describe what is meant by a triangulated
bicategory.  From this vantage, we are able to give a conceptual
explanation of derived Morita theory for which the results (and
counterexample) of \cite{dugger2007ted} and \cite{toen2007htd} become
verifying examples.

This is achieved in three stages.  First, in \autoref{yoneda} we
remind the reader of the bicategorical Yoneda Lemma (\ref{YL}) and
explain that what is often called Morita theory is a corollary
(\ref{MoII}) of this Yoneda Lemma.  We encourage the intuition that
bicategorical Morita theory is as elementary as the bicategorical
Yoneda Lemma; this gives one possible reason that bicategorical
perspectives have yielded such an abundance of generalizations for
classical Morita theory.  Second, we modify a standard observation
from the context of enriched category theory to explain that, in
bicategories with left and right internal homs, Morita theory must
necessarily focus on equivalences which are enriched over the internal
homs (\ref{rtF}).  This gives a reason for the results and examples
mentioned above.  Finally, in \autoref{practical} we apply our
understanding of the Yoneda Lemma.  Our perspective allows us to
re-frame the issue of derived Morita theory and sheds some new light
on the subtleties there.  In \autoref{enrichment} we discuss the
relationship of Morita theory to ambient enrichments.  In classical
Morita theory the ambient abelian enrichment is automatically
preserved (\autoref{enrichmentRmk}), but this is not the case in all
other contexts.  This provides, for example, a reason why the
development of Morita theory has met unexpected barriers in the DG
case.

We foreshadow the bicategorical perspective by outlining a proof of
Rickard's theorem in \autoref{outline}.  After establishing
bicategorical terminology in \autoref{context}, \autoref{proof} gives
the details of this proof.  In \autoref{RiSp} we give a generalization
of this theorem to ring spectra.  The last two sections cover some
basic model-theoretic results for our bicategory of DG-algebras and
their bimodules; they are verifications that expected results from the
theory of monoidal stable model categories generalize to our context
in straightforward ways.  \autoref{Model} gives a bicategorical
development of the standard model structure for DG algebras, and
\autoref{base-change} describes the formal structure arising from
change of base algebra.

\section{Outline}\label{outline}
In this section, we demonstrate our perspective by giving Rickard's
theorem for derived Morita theory of rings, together with an outline
of its proof.  For reference, we give a statement of the classical
Morita theorem together with its proof, and we follow these with some
remarks about derived Morita theory for DGAs.  The counterexample of
\cite{dugger2007ted} is given as \autoref{DSeg}, and shows that
Rickard's theorem does not generalize to DGAs as stated.  We give
some hints about where this breakdown occurs, to be described more
fully after we have developed the appropriate bicategorical
perspective.

\begin{note}
In the following, we implicitly understand ``module'' to mean
``right-module'', unless it is otherwise qualified.  The most frequent
instance of this opposite qualification will be that the endomorphism
ring of a right-module acts on its \emph{left}, and vice-versa.
\end{note}

\begin{thm}[Rickard]\label{Ri}
  Let $k$ be a commutative ring, and let $R$ and $S$ be $k$-algebras.
  The derived categories $\sD_k(R)$ and $\sD_k(S)$ are equivalent as
  triangulated categories if and only if there is an object $T$ of
  $\sD_k(S)$ with the following three properties.
  \begin{enumerate}
    \renewcommand{\theenumi}{\emph{\roman{enumi}}}
    \item $T$ is (quasi-isomorphic to) a bounded complex of
      finitely-generated projective $S$-modules.
    \item $T$ generates the triangulated category $\sD_k(S)$.
    \item The graded endomorphism algebra $\sD_k(S)[T,T]_*$ is
    concentrated in degree zero and isomorphic to $R$ as a
    $k$-algebra.
  \end{enumerate}
\end{thm}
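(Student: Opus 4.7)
The plan is to prove the biconditional by splitting into necessity and sufficiency, and then to identify the content that later bicategorical machinery will clarify. For the necessity direction, I would set $T = F(R)$ for a chosen triangulated equivalence $F\colon \sD_k(R) \to \sD_k(S)$, and verify the three properties by transporting structural properties of $R$ across $F$. Property (iii) is immediate: $\sD_k(R)[R,R]_*$ is concentrated in degree zero with value $R$ as a $k$-algebra, and an exact equivalence preserves graded endomorphism algebras. Property (ii) follows because $R$ generates $\sD_k(R)$ as a triangulated category (every module has a projective resolution by direct sums of shifts of $R$), and triangulated equivalences preserve generators. Property (i) is the most substantive: $R$ is compact in $\sD_k(R)$, triangulated equivalences preserve compactness, and the compact objects of $\sD_k(S)$ are exactly the perfect complexes, i.e.\ bounded complexes of finitely generated projective $S$-modules.

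For sufficiency, given such a $T$, I would construct the equivalence using $T$ as an $(R,S)$-bimodule via derived tensor product. First, using property (iii), I would choose a cofibrant model of $T$ (for instance, a bounded complex of f.g.\ projective $S$-modules as guaranteed by (i)) and realize the isomorphism $R \iso \sD_k(S)[T,T]_0$ as a strict left action, so that $T$ becomes a genuine $(R,S)$-bimodule on the point-set level. Define
\[
  F = -\ten^L_R T \colon \sD_k(R) \to \sD_k(S), \qquad G = \mathrm{RHom}_S(T,-) \colon \sD_k(S) \to \sD_k(R),
\]
an adjoint pair. By construction $F(R) = T$, and property (iii) gives $GF(R) \simeq \mathrm{RHom}_S(T,T) \simeq R$, so the unit of the adjunction is an isomorphism at $R$.

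Next I would propagate the unit isomorphism from $R$ to all of $\sD_k(R)$. The subcategory on which the unit is an isomorphism is closed under triangles and shifts; property (i) ensures $T$ is compact in $\sD_k(S)$, so both $F$ and $G$ preserve arbitrary sums, and that subcategory is therefore localizing. Since it contains the generator $R$, it must be all of $\sD_k(R)$, so $F$ is fully faithful. For essential surjectivity, the essential image of $F$ is a localizing triangulated subcategory of $\sD_k(S)$ containing $T$, so property (ii) forces it to be all of $\sD_k(S)$.

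The main obstacle I anticipate is the strictification in the sufficiency direction: the isomorphism in (iii) lives in the derived category and must be lifted to an actual bimodule structure before $\ten^L_R$ can be formed. This step is where the bicategorical perspective of the later sections pays off. In a triangulated bicategory of derived bimodules, $T$ is a $1$-cell with a canonical derived tensor composition, and the full-faithfulness argument above is precisely the instance of the bicategorical Yoneda Lemma (\ref{YL}) applied to $T$, which is the conceptual content advertised as Corollary (\ref{MoII}). The compactness condition (i) is what allows us to extend the Yoneda-style isomorphism from $R$ to all of $\sD_k(R)$ via sums, and the generation condition (ii) is what ensures the corepresented functor sees the whole category. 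All three conditions will then be visible as the minimal input needed to apply that Yoneda argument in the triangulated bicategorical setting.
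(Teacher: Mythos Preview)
Your necessity direction matches the paper's (Proposition~\ref{RiP2}): set $T = FR$ and transport the three properties across the equivalence.

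For sufficiency, your approach diverges from the paper's and the gap is exactly where you flag it. You propose to make $T$ a strict $(R,S)$-bimodule by lifting the isomorphism $R \cong \sD_k(S)[T,T]_0 = H_0\End_S(T)$ to a point-set left action of $R$ on $T$, so that $-\ten^L_R T$ is defined. But that requires a $k$-algebra map $R \to \End_S(T)$, whereas what condition~(iii) gives is only the surjection $Z_0\End_S(T) \twoheadrightarrow H_0\End_S(T) \cong R$, which goes the wrong way and need not split multiplicatively. The bicategorical Yoneda Lemma~\ref{YL} does not rescue this: in the paper it is used to \emph{characterize} which equivalences are standard (Section~\ref{practical}), not to manufacture bimodule structures. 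Likewise, your identification of the full-faithfulness step with Yoneda is a misattribution; the paper's mechanism there is duality, not Yoneda.

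The paper avoids strictification entirely by factoring through the DG endomorphism algebra $E = \End_S(T)$. First, Proposition~\ref{RiP1} (proved in Section~\ref{proof}) gives $\sD_k(E) \simeq \sD_k(S)$ by regarding $T$ as an $(E,S)$-bimodule---a structure it carries tautologically, with no lifting required---and showing via duality and the $\odot$-detecting argument (Lemma~\ref{odDlem}) that the pair $(\wt{T}, \wt{T} \rt S)$ is invertible. Second, the formality Lemma~\ref{RiL} provides the zig-zag of quasi-isomorphisms $R \cong H_0E \leftarrow E_+ \to E$, and base change along it yields $\sD_k(R) \simeq \sD_k(E)$. Composing gives the equivalence. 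Your localizing-subcategory propagation of the unit isomorphism is correct in spirit and parallels the paper's $\odot$-detecting step, but it should be run with $E$ in place of $R$; the passage from $E$ to $R$ is handled separately by formality, and this is precisely the step that fails for general DG algebras (cf.\ Example~\ref{DSeg}).
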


\begin{thm}[Morita]
  Let $R$ and $S$ be rings.  The categories $Mod_R$ and $Mod_S$ are
  equivalent if and only if there is an object $P$ of $Mod_S$ with the
  following three properties.
  \begin{enumerate}
    \renewcommand{\theenumi}{\emph{\roman{enumi}}}
    \item \label{cmi} $P$ is a finitely-generated projective $S$-module.
    \item \label{cmii} $P$ generates the abelian category $Mod_S$.
    \item \label{cmiii} The endomorphism ring $Hom_S(P,P)$ is
      isomorphic to $R$.
  \end{enumerate}
\end{thm}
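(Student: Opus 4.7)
The plan is to prove the two directions separately, with the backward direction carrying the substance of the argument. For the forward direction, suppose $F\colon Mod_R \to Mod_S$ is an equivalence and set $P = F(R)$. Since $R$ is a generator of $Mod_R$ and generators are preserved by equivalences, $P$ generates $Mod_S$, giving (\emph{ii}). Finitely generated projective objects are characterized categorically as the compact projectives (projective meaning $\Hom$ is exact, compact meaning $\Hom$ preserves filtered colimits), so $P$ inherits property (\emph{i}) from $R$. Finally, $F$ induces a ring isomorphism $R \iso \End_R(R) \iso \End_S(P) = \Hom_S(P,P)$, establishing (\emph{iii}).

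For the backward direction, I would use condition (\emph{iii}) to upgrade $P$ to an $(R,S)$-bimodule, letting $R$ act on the left through the isomorphism $R \iso \Hom_S(P,P)$.  This allows me to form the standard tensor-hom adjunction
\[ F := -\ten_R P \colon Mod_R \to Mod_S, \qquad G := \Hom_S(P,-) \colon Mod_S \to Mod_R. \]
It then suffices to show that the unit $\eta_M \colon M \to \Hom_S(P, M \ten_R P)$ and counit $\epz_N \colon \Hom_S(P,N) \ten_R P \to N$ are natural isomorphisms.

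I would verify each by evaluating at a single distinguished object and then bootstrapping through colimits.  At $M = R$ the unit collapses to the isomorphism of (\emph{iii}); at $N = P$ the counit collapses to the canonical isomorphism $R \ten_R P \iso P$.  The composites $GF$ and $FG$ commute with cokernels and direct sums, since $-\ten_R P$ is right exact and additive while (\emph{i}) guarantees the same for $\Hom_S(P,-)$.  Any right $R$-module $M$ is a cokernel of a map between free $R$-modules, so $\eta$ is an iso everywhere; any right $S$-module $N$ admits, by (\emph{ii}), a presentation as a cokernel of a map between direct sums of copies of $P$, so $\epz$ is an iso everywhere.  The main obstacle, and the conceptual heart of the proof, is precisely this interplay of (\emph{i}) and (\emph{ii}): without (\emph{i}) the functor $\Hom_S(P,-)$ would fail to preserve the required colimits, and without (\emph{ii}) the category $Mod_S$ would not admit such $P$-presentations.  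From the bicategorical vantage the paper develops, I expect this argument to be reframed as a direct instance of the Yoneda Lemma, with $P$ serving as the Morita-equivalence witness.
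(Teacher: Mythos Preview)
Your proof is correct, and the forward direction matches the paper's almost verbatim.  The backward direction, however, is organized differently.  You use the tensor--hom adjunction $(-\otimes_R P,\ \Hom_S(P,-))$ and verify that unit and counit are isomorphisms by checking them at the single objects $R$ and $P$ and then propagating through presentations, invoking (\emph{i}) to make $\Hom_S(P,-)$ commute with the relevant colimits and (\emph{ii}) to ensure every $S$-module has a $P$-presentation.  The paper instead works entirely at the bimodule level in the bicategory $\sM$: it pairs $P$ with the \emph{dual bimodule} $\Hom_S(P,S)$ rather than with the functor $\Hom_S(P,-)$, interprets (\emph{i}) via the dual basis lemma as the statement that the coevaluation $P\otimes_S \Hom_S(P,S)\to \Hom_S(P,P)$ is an isomorphism, and interprets (\emph{ii}) as the statement that the evaluation $\Hom_S(P,S)\otimes_R P\to S$ is an isomorphism.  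Together with (\emph{iii}) this exhibits $(P,\Hom_S(P,S))$ as an invertible dual pair, and the equivalence of module categories drops out in one stroke from \autoref{dltyAdj} without any object-by-object bootstrapping.  Your argument is the classical one and is perfectly sound; the paper's repackaging buys a single check of two bimodule maps in place of two colimit arguments, and---more to the point for this paper---it is the template that generalizes directly to the triangulated setting of \autoref{RiP1} and the bicategorical Yoneda framework of \autoref{yoneda}.
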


\begin{proof}
For the classical theorem of Morita, we make use of the bicategory
$\sM$ of rings and their bimodules.  The dual basis lemma gives that
condition (\ref{cmi}) is equivalent to the canonical coevaluation map
\[
\nu: P \otimes_S \Hom_S(P,S) \to \Hom_S(P,P)
\] 
being an isomorphism.  With condition (\ref{cmiii}), this can be
phrased in the bicategorical context by saying that $(P, \Hom_S(P,S))$
form a dual pair over $S$ and $R$, which means that the functors $-
\otimes_R P$ and $- \otimes_S \Hom(P,S)$ are an adjoint pair.  The
generating condition, (\ref{cmii}), is equivalent to the canonical
evaluation map $\epz: \Hom_S(P,S) \otimes_R P \to S$ being an
isomorphism, and hence this dual pair is an invertible pair, giving an
adjoint equivalence of categories.  The converse is also easy to see
classically, since an equivalence of categories $F : Mod_R \to Mod_S$
induces an isomorphism on the morphisms between modules, and therefore
a ring isomorphism
\[ 
R \iso \Hom_R(R,R) \fto{F} \Hom_S(FR,FR).
\] 
Moreover, the other two properties are enjoyed by $R$ and preserved by
equivalences, so taking $P = FR$ gives the converse.
\end{proof}

\begin{rmk}\label{enrichmentRmk}
  For our future discussion, it is worth noting that this argument
  takes advantage of the elementary fact that the abelian group
  structure on $\Hom_R(R,R)$ is necessarily preserved by $F$.  In fact
  any left adjoint functor between abelian categories is automatically
  enriched over abelian groups.  This is neither expected nor true of
  more general enrichments.
\end{rmk}

This point of view on the classical theorem is readily generalized to
the proof of Rickard's theorem.  In order to clarify the proof, we
separate Rickard's theorem into a well-known lemma and two
propositions.

\begin{lem}\label{RiL}
  Let $E$ be a DG $k$-algebra whose homology is concentrated in degree
  zero.  Then $E$ is quasi-isomorphic to its homology, and hence there
  is a triangulated equivalence $\sD_k(E) \hty \sD_k(H_*E)$.
\end{lem}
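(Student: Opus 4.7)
The plan is to construct an explicit zig-zag of quasi-isomorphisms of DG $k$-algebras from $E$ to $H_*E$ (viewed as a DG algebra concentrated in degree zero with trivial differential), thereby reducing the derived equivalence to the standard fact that a quasi-isomorphism of DG algebras induces a triangulated equivalence of derived categories.

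First, I would form the good truncation $\tau_{\geq 0}E$, defined to equal $E_n$ in positive degrees, the cycles $Z_0(E)$ in degree zero, and $0$ in negative degrees. The differential and multiplication of $E$ restrict to make this a sub-DG-algebra: the key checks are that $Z_0(E)$ is closed under multiplication and that the Leibniz rule carries $Z_0(E) \cdot E_n$ into the truncation. The inclusion $\tau_{\geq 0}E \hookrightarrow E$ is then a DG-algebra quasi-isomorphism, since it is the identity on $H_n$ for $n \geq 0$ and both sides vanish in negative degrees by the hypothesis on $H_*E$.

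Second, I would define a projection $p : \tau_{\geq 0}E \to H_0(E)$ by the quotient map $Z_0(E) \to Z_0(E)/B_0(E)$ in degree zero and by $0$ in positive degrees. This is a chain map because $d(E_1) = B_0(E)$ is killed by the quotient, and it is a DG-algebra map because $B_0(E)$ is a two-sided ideal of $Z_0(E)$: for $a \in Z_0(E)$ and $x \in E_1$, the Leibniz rule with $da = 0$ yields $a \cdot dx = d(ax) \in B_0(E)$, and symmetrically on the right. It is a quasi-isomorphism since it is the identity on $H_0$ and both sides have no positive homology (again by the hypothesis). Combining the two maps yields a zig-zag
\[
E \;\xleftarrow{\sim}\; \tau_{\geq 0}E \;\xrightarrow{\sim}\; H_*E
\]
of DG-$k$-algebra quasi-isomorphisms. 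To finish, I would invoke the standard model-theoretic result that a quasi-isomorphism $\phi : A \to B$ of DG $k$-algebras induces a Quillen equivalence between the projective model categories of DG $A$- and $B$-modules via restriction and extension of scalars, hence a triangulated equivalence $\sD_k(A) \simeq \sD_k(B)$.

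The main obstacle is really only the careful bookkeeping needed to confirm that the truncation is a sub-DG-algebra and that $B_0(E)$ is a two-sided ideal in $Z_0(E)$; these are the only places where the multiplicative structure (beyond the DG-module structure) enters nontrivially, and they reduce directly to the Leibniz rule combined with the vanishing of the differential on cycles. Every other step is either an elementary homology calculation or a standard invocation from the model theory of DG modules, and can be cited rather than rederived within this paper.
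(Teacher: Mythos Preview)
Your proposal is correct and follows essentially the same approach as the paper: the paper's $E_+$ is exactly your good truncation $\tau_{\geq 0}E$, and the zig-zag of quasi-isomorphisms through this intermediate DG algebra is identical. You supply more of the verification details (that the truncation is a sub-DG-algebra and that $B_0(E)$ is a two-sided ideal of $Z_0(E)$) than the paper does, but the argument is the same.
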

\begin{proof}
  Let
  \[
  (E_+)_n =
  \begin{cases}
    E_n ,\, n > 0\\
    Z_0(E) = ker(d_0) ,\, n = 0\\
    0 ,\, n < 0
  \end{cases}
  \]
  Then the projection and inclusion define a zig-zag of
  quasi-isomorphisms
  \[\text{H}_0(E) \fot{\hty} E_+ \fto{\hty} E,\]
  and base change along these maps gives equivalences of derived
  categories.
\end{proof}

\begin{defn}[formality]
  DG algebras which are quasi-isomorphic to their homology are called
  formal.
\end{defn}

\begin{prop}\label{RiP2}
  Let $R$ and $S$ be $k$-algebras.  If $F: \sD_k(R) \hty \sD_k(S)$ is an
  equivalence of triangulated categories, then there is an object $T
  \elt \sD_k(S)$ with the following two properties:
  \begin{enumerate}
    \renewcommand{\theenumi}{\emph{\roman{enumi}}}
  \item $T$ is (quasi-isomorphic to) a bounded complex of
    finitely-generated projective $S$-modules.
  \item $T$ generates the triangulated category $\sD_k(S)$.
  \end{enumerate}
  Moreover, the DG endomorphism algebra $End_S(T)$ is
  quasi-isomorphic to $R$ as a DG $k$-algebra.
\end{prop}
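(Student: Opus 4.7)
The plan is to take $T = F(R)$, viewing $R$ as the free rank-one right $R$-module in $\sD_k(R)$. The key first observation is that $R$ is a compact generator of $\sD_k(R)$: it is a generator since every module admits a surjection from a free module, and it is compact because $\Hom_{\sD_k(R)}(R,-)$ is naturally isomorphic to $H_0(-)$, which commutes with arbitrary coproducts. Triangulated equivalences preserve compactness (compactness is expressed purely in terms of the triangulated structure together with coproducts) and generation, so $T = F(R)$ is a compact generator of $\sD_k(S)$. This gives property (ii) directly. Property (i) then follows from the standard identification of compact objects in the derived category of a ring with perfect complexes, i.e., with objects quasi-isomorphic to bounded complexes of finitely-generated projective modules.

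For the endomorphism statement, I would pick a model $P$ of $T$ that is a bounded complex of finitely-generated projective $S$-modules, as supplied by (i). The DG endomorphism algebra $End_S(P)$ then models the derived endomorphism object $\bR\Hom_S(T,T)$; in particular its homology is the graded Ext algebra, which agrees with the graded endomorphism algebra $\sD_k(S)[T,T]_*$ taken in the derived category. Because $F$ is a ($k$-linear) triangulated equivalence, it induces an isomorphism of graded $k$-algebras
\[
  \sD_k(R)[R,R]_* \iso \sD_k(S)[T,T]_*,
\]
whose left-hand side is $R$ placed in degree zero.

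Having arranged that $End_S(P)$ has homology concentrated in degree zero and isomorphic to $R$ there, I would close the argument by invoking the formality lemma (\autoref{RiL}) to produce a zig-zag of quasi-isomorphisms of DG $k$-algebras connecting $End_S(P)$ to $R$.

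The step that requires the most care, and the one that will foreshadow the bicategorical perspective of later sections, is the identification of the graded $k$-algebra structure on $\sD_k(S)[T,T]_*$ with that on $R$. For ordinary rings this is essentially automatic: by \autoref{enrichmentRmk} an additive equivalence is automatically enriched over abelian groups and, by $k$-linearity, over $k$-modules, so composition transports along $F$ and the ring structure is preserved. It is precisely the failure of the analogous DG-enrichment to be automatic that will prevent a verbatim generalization of this argument to DG algebras, as the counterexample of Dugger and Shipley will illustrate.
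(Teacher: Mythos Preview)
Your proposal is correct and follows essentially the same route as the paper: take $T = F(R)$, observe that properties (i) and (ii) hold for $R$ and are preserved by triangulated equivalences (you spell this out via compactness and the identification of compact objects with perfect complexes, whereas the paper simply asserts preservation), then use that $F$ induces an isomorphism of graded endomorphism rings to see $H_*\End_S(T)$ is $R$ concentrated in degree zero, and finally invoke the formality lemma \autoref{RiL}. Your closing remark on $k$-linearity and enrichment is exactly in the spirit of the paper's \autoref{enrichmentRmk} and its later discussion; note, though, that the proposition as stated only assumes a triangulated equivalence, so strictly speaking the $k$-algebra (as opposed to ring) identification carries an implicit $k$-linearity hypothesis that both you and the paper gloss over.
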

\begin{proof}
  A common proof of this proposition (see \cite{schwede2004mta}, for
  example) is to remark that the two conditions are preserved by exact
  equivalences and are enjoyed by $R$ regarded as a module over
  itself, hence also $T = FR$ has these properties.

  Equivalences induce isomorphisms on homology of endomorphism DG
  $k$-algebras, and so $End_S(T)$ has homology which is concentrated
  in degree $0$ and isomorphic to the homology of $R$ (that is, $R$
  itself).  \autoref{RiL} shows that $End_S(T)$ is therefore formal,
  and hence $End_S(T)$ and $R$ are quasi-isomorphic DG $k$-algebras.
\end{proof}

Since $H_*End_S(T) = \sD_k (S)[T,T]_*$, this proves one implication in
Rickard's theorem.  The other implication is proved by again applying
\autoref{RiL} in the case $E = End_s(T)$.  If $R$ is isomorphic to
$H_*E$, then formality ensures that $R$ and $E$ are quasi-isomorphic
and hence $\sD_k(E) \hty \sD_k(R)$.  The following proposition then
proves this direction of Rickard's theorem, by specializing to the
case that $S$ is a DG $k$-algebra concentrated in degree $0$.

\begin{note}
Dualizable modules over a DG $k$-algebra are defined
in \autoref{dltybicat}, but for the current argument it is enough to
observe that when $S$ is a DG $k$-algebra concentrated in degree 0,
then a right-dualizable $S$-module is simply a bounded complex of
finitely generated and projective $S$-modules.
\end{note}

\begin{prop}\label{RiP1}
  Let $S$ be a DG $k$-algebra, and let $T$ be a DG $S$-module.  If $T$ has
  the following two properties, then $\sD_k(S)$ and
  $\sD_k(\text{End}_S(T))$ are equivalent as triangulated categories.
  \begin{enumerate}
    \renewcommand{\theenumi}{\emph{\roman{enumi}}}
  \item $T$ is a right-dualizable $S$-module.
  \item $T$ generates the triangulated category $\sD_k(S)$.
  \end{enumerate}
\end{prop}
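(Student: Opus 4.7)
The plan is to realize the desired equivalence as an adjoint equivalence $(F, G)$ coming from the bimodule structure on $T$. Write $E = \End_S(T)$, so that $T$ is naturally an $(E, S)$-bimodule, and take $F = - \ten^L_E T : \sD_k(E) \to \sD_k(S)$ with right adjoint $G = \mathbf{R}\Hom_S(T, -)$. This parallels the classical Morita argument reviewed above: the dualizability hypothesis supplies the unit datum, and the generating hypothesis supplies the counit datum.

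Right-dualizability (i) produces an $(S, E)$-bimodule $T^\vee$ together with a coevaluation $\nu : E \to T \ten^L_S T^\vee$ that is a quasi-isomorphism and an evaluation $\epz : T^\vee \ten^L_E T \to S$, so that $(T, T^\vee)$ forms a dual pair in the triangulated bicategory of DG $k$-algebras and derived bimodules. Consequently $G$ may be identified with the exact, coproduct-preserving functor $- \ten^L_S T^\vee$, and the unit and counit of $F \dashv G$ are the natural transformations built from $\nu$ and $\epz$ respectively.

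To check the unit is a quasi-iso, I would evaluate at $M = E$, where it recovers $\nu$. Both $F$ and $G$ are exact and preserve arbitrary coproducts, so the full subcategory of $\sD_k(E)$ on which the unit is a quasi-iso is localizing; since it contains the generator $E$, it equals all of $\sD_k(E)$. For the counit, I would evaluate at $N = T$: using the inverse of $\nu$, the counit becomes the canonical quasi-iso $E \ten^L_E T \to T$. Then hypothesis (ii), together with the exactness and coproduct-preservation of $FG$ and the identity, forces the counit to be a quasi-iso on all of $\sD_k(S)$. Hence $(F, G)$ is an adjoint equivalence of triangulated categories.

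The principal obstacle is not conceptual but technical: one must arrange that the strict constructions $\End_S(T)$ and $- \ten_S T^\vee$ compute the derived ones, which requires replacing $T$ by a suitably cofibrant DG model so that underived endomorphism and tensor operations agree with their derived counterparts. This model-theoretic bookkeeping is the content deferred to \autoref{Model} and \autoref{base-change}; once it is in place, the argument above goes through as sketched, providing the bicategorical proof of the implication anticipated in the introduction.
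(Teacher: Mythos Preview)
Your proposal is correct and follows essentially the same approach as the paper: realize $T$ as an $(E,S)$-bimodule, use dualizability to make the unit of the induced adjunction an isomorphism, and use the generator hypothesis to make the counit an isomorphism. The only cosmetic difference is that the paper packages the counit step via the notion of an \emph{$\odot$-detecting} 1-cell (showing that the evaluation map is an isomorphism in $\sD_k(S,S)$ because $T \odot -$ detects it), whereas you run the equivalent localizing-subcategory argument directly in $\sD_k(S)$; and the paper makes the ``cobase extension'' from $k$ to $E$ explicit (your $\nu : E \to T \ten^L_S T^\vee$ being an isomorphism is exactly the content of that step together with \autoref{gaDlz}).
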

The proof is given in \autoref{proof} below.  It shows that $T$ has a
dual and the desired equivalence is given by the derived tensor
product with $T$; its inverse is derived tensor with the dual of $T$.
Such equivalences are called \emph{standard derived equivalences}.

Since DG $k$-algebras are, in general, not formal, we do not expect
Rickard's theorem to generalize to DG $k$-algebras as stated.
However, if the third condition for $T$ is strengthened to a
requirement that $End_S(T)$ be quasi-isomorphic to $R$, then
\autoref{RiP1} is a proof for one direction.  \autoref{RiP2} is not
generally true when $R$ and $S$ are taken to be DG $k$-algebras, and
this is the main barrier to generalizing Rickard's theorem.  The
difficulty is that one does not have formality for DG $k$-algebras in
general.  More precisely, formality is used in the proof of
\autoref{RiP2} to show that an equivalence of derived categories (of
rings) is sufficient to guarantee a quasi-isomorphism of DG
$k$-algebras between a ring and the endomorphism DG $k$-algebra of its
image under the equivalence.  Such a quasi-isomorphism is neither
expected nor present in greater generality.  We investigate this in
\autoref{yoneda}, but for now we give an example to illustrate how
\autoref{RiP2} can fail in the DG situation.

\begin{eg}\label{DSeg}
  In \cite{dugger2007ted}, an example of two DG rings is given: $C =
  \bZ[e]/(e^4)$ with $|e| = 1$ and $d(e) = 2$, and $A = H_*C$.  The
  model categories of $C$-modules and $A$-modules are Quillen
  equivalent, but there is no possible bimodule with the properties
  listed in Rickard's theorem.  That there can be no such bimodule is
  proven by noting that $A$ is a DGA over $\bZ/2$, but $C$ is not
  quasi-isomorphic to any DGA over $\bZ/2$.  Since the endomorphism
  algebra of any $A$-module would also be a DGA over $\bZ/2$, there
  cannot be a $C$-$A$-bimodule with endomorphism DGA quasi-isomorphic
  to $C$.  The argument that these DGAs do have Quillen equivalent
  categories of modules involves a THH (topological Hochschild
  homology) calculation which produces an equivalence of
  $\bS$-algebras between their Eilenberg-Mac Lane spectra.  More
  details can be found in \cite{shipley2006mts}.
\end{eg}

To understand the force of this example better, we note that the
equivalences arising in \autoref{RiL} and \autoref{RiP1} are
\emph{standard derived equivalences}; they are given by derived tensor
with a DG-bimodule.  These are manifestly induced by Quillen
equivalences of model categories, namely the underived tensor on the
categories of DG-modules.  The example above shows, however, that the
property of being induced by a Quillen equivalence is not sufficient
to characterize the standard derived equivalences.  To reiterate, the
DGAs in the example do have Quillen equivalent module categories, but
the induced equivalence of derived categories \emph{cannot} be a
standard derived equivalence.

A key to fully characterizing standard derived equivalences is an
observation about the organized way in which standard derived
equivalences preserve \emph{bi}module structures.  If $M$ is an
$R$-$S$ DG-bimodule, then $- \otimes_R M$ preserves \emph{left}-module
structure for all right $R$-modules.  By neglect, this can be regarded
as a functor from right DG $R$-modules to right DG $S$-modules, but to
do so forgets too much.  In the example above, the Quillen equivalence
of right-module categories does not preserve \emph{left}-module
structure, so it cannot induce a standard derived equivalence.

An alternative perspective might point out that the standard derived
equivalences also preserve categorical enrichment.  That is, with $M$
as above, $- \otimes_R M$ induces morphisms of hom \emph{objects}, and
is compatible with the enriched composition in the expected way.  The
Quillen pair of functors produced in the example of
\cite{dugger2007ted} is not a pair of DG-enriched functors.

The point of \autoref{rtF} is that these two perspectives are in fact
equivalent.  Moreover, \autoref{MoII} interprets the Yoneda Lemma
(\ref{YL}) as a statement that these (equivalent) properties do
characterize the standard derived equivalences.  These observations
are unlikely to be surprising to an enriched category theorist, as
they are the apparent generalizations (or specializations) of standard
results to our context, but they have been included for the algebraist
or topologist who may be unfamiliar with this perspective.

In \cite[Ch. 8]{konig1998deg}, Keller remarks that there are no known
examples of \textit{non-standard} derived equivalences for rings.  Our
characterization of Morita theory via the Yoneda Lemma yields the
following proposition.  The notation $\sD_k(X,Y)$ denotes derived
categories of bimodules, described in more detail below.  Also note
that $\End_k(A)$ is taken to mean the derived endomorphism ring.
\begin{prop*}[See \ref{StdDerEquivs}]
  Let $k$ be a commutative ring, let $A$ be a DG $k$-algebra and let
  $f : \sD_k(A) \to \sD_k(\End_k(A))$ be an equivalence of triangulated
  categories.  Then $f$ is a standard derived equivalence if and only
  if the following conditions hold.
  \begin{enumerate}
    \renewcommand{\theenumi}{\textit{\roman{enumi}}}
  \item The equivalence given by $f$ is an enriched equivalence.
  \item There is an enriched equivalence $f' : \sD_k(A,A) \to \sD_k(\End_k(A), A)$.
  \item The two equivalences, $f$ and $f'$ are compatible in the
    following sense: If $T', U' \elt \sD_k(A,A)$ and $T,U \elt
    \sD_k(A) = \sD_k(A,k)$, then there are natural maps 
    \begin{center}
    \begin{tabular}{ll}
    $\Ext_A(T,U') \to \Ext_{\End_k(A)}(fT,f'U')$ & in $\sD_k(k,A)$\\
    $\Ext_A(T',U) \to \Ext_{\End_k(A)}(f'T',fU)$ & in $\sD_k(A,k)$
    \end{tabular}
    \end{center}
    which commute with the pairing induced by composition.  (That is,
    the squares in \autoref{Ermk} commute.)
  \end{enumerate}
\end{prop*}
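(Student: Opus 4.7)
The plan is to recognize this proposition as a direct application of the bicategorical Yoneda Lemma (\ref{YL}) and its Morita-theoretic interpretation (\ref{MoII}) to the triangulated bicategory whose objects are DG $k$-algebras, whose $1$-cells from $A$ to $B$ are objects of $\sD_k(A,B)$, and whose $2$-cells are morphisms therein. In this framework, a \emph{standard} derived equivalence from $\sD_k(A)$ to $\sD_k(\End_k(A))$ is by definition derived tensor with a bimodule $M \in \sD_k(A, \End_k(A))$, i.e.\ a $1$-cell $M : A \to \End_k(A)$ that happens to be invertible in the bicategory.

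For the forward implication, suppose $f \simeq - \otimes_A^L M$ for such an $M$. Condition (i) holds because tensor with a bimodule is automatically enriched over $\sD_k(k)$ (the internal hom in the bicategory distributes over bimodule tensor). Condition (ii) holds because the same bimodule $M$ produces the companion functor $f' = - \otimes_A^L M$ on categories of bimodules $\sD_k(A,A) \to \sD_k(\End_k(A), A)$, which is again an enriched equivalence with inverse given by tensor with the dual of $M$. Condition (iii) is then the usual naturality of the $\Ext$ pairing: tensoring preserves composition of $\Ext$ classes, so the squares of \autoref{Ermk} commute on the nose.

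For the reverse implication, the strategy is to reconstruct $M$ as $f'(A) \in \sD_k(\End_k(A), A)$, where $A$ is regarded as a bimodule over itself. The enrichment of $f$ from (i) gives maps $\Ext_A(A, U) \to \Ext_{\End_k(A)}(f A, f U)$ which, combined with (ii), supply a left $A$-action on $fA$ compatible with the right $\End_k(A)$-action, identifying $fA$ with $f'(A)$ as bimodules. The compatibility condition (iii) is then precisely what guarantees that this data assembles into a morphism of bicategories in the sense required by the Yoneda Lemma: the two bilinear pairings compose as they should, so $f$ agrees with $- \otimes_A^L f'(A)$ on representable hom-complexes and, by (\ref{MoII}), on all of $\sD_k(A)$.

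The main obstacle is the reverse direction, and within it the verification that (iii) suffices to upgrade the pair $(f, f')$ from a compatible pair of enriched functors on the $1$-cell level into a genuine morphism of triangulated bicategories to which \ref{YL} applies. In practice this amounts to checking that the maps $\Ext_A(T,U') \to \Ext_{\End_k(A)}(fT, f'U')$ and $\Ext_A(T',U) \to \Ext_{\End_k(A)}(f'T', fU)$ together encode the action of $f$ on $2$-cells in a way that respects horizontal composition; once this is in place, the Yoneda Lemma identifies $f$ with tensor by $f'(A)$, and invertibility of $f$ forces this bimodule to be invertible, completing the characterization.
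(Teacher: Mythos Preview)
Your proposal is correct and follows the same route as the paper. The paper does not give a separate proof of this proposition; it is stated as the specialization of \autoref{Ecor} (and behind it \autoref{Ycor}, \autoref{MoII}, and \autoref{rtF}) to the bicategory $\sD_k$ with $I = k$ and $B = \End_k(A)$. Your forward direction is exactly the observation that $- \odot M$ carries the enrichment and compatibility data automatically, and your reverse direction---reconstructing the bimodule as $f'(A)$ and identifying $f$ with $- \odot f'(A)$---is the content of the Yoneda argument underlying \autoref{MoII}.

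Two small points. First, your terminology ``morphism of (triangulated) bicategories'' is not what is meant here; the relevant structure is a \emph{strong transformation of represented pseudofunctors} $\sD_k(A,-) \to \sD_k(\End_k(A),-)$, restricted to the two-object sub-bicategory on $A$ and $k$. Second, the passage from ``transformation'' (what \autoref{Ecor} gives) to ``strong transformation'' (what \autoref{MoII} requires) is handled by \autoref{odotRep}: since $f$ and $f'$ are equivalences, their inverses assemble into an adjoint transformation, forcing strongness. You allude to this in your final sentence but it is worth making explicit, since \autoref{Ecor} alone only produces a transformation.
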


Specializing to the case that $A$ and $\Ext_k(A,A)$ are concentrated
in degree $0$, this proposition implies that a derived equivalence of
rings is standard if and only if it preserves bimodule structure as
described above; see \autoref{rtF}.

To make the characterization of standard derived equivalences clear,
we cannot avoid introducing bicategorical language.  In particular, we
must describe the notion of \emph{pseudofunctor}, especially
\emph{represented pseudofunctor}, and \emph{strong transformation} of
(represented) pseudofunctor.  This language is relevant because a
\emph{component} of a transformation between pseudofunctors is a
functor between certain categories, and the question of whether a
given derived equivalence is a standard derived equivalence is
precisely the same as whether the given functor is a component of a
strong transformation between two specific pseudofunctors.  We address
this fully in \autoref{yoneda}, but we begin in \autoref{context} by
introducing our bicategorical context.  In \autoref{proof} we
illustrate the bicategorical language with a proof of \autoref{RiP1},
and in \autoref{Model} and \autoref{base-change} we give a further
development of the structure present in our bicategorical framework.
This is the foundation for our applications of the Yoneda Lemma in
\autoref{yoneda}.

\section{Bicategorical Context}\label{context}
We make use of a bicategorical context to organize and clarify our
understanding of Morita theory.  In this section, we introduce this
organizational tool for those to whom it is unfamiliar.  For the
classical Morita theorem, we consider $\sM$, the bicategory of rings,
bimodules, and bimodule maps.  For Rickard's theorem, we consider
$DG_k$, the bicategory of DG $k$-algebras, $DG$-bimodules and their
maps.  Associated to this bicategory, we have a derived bicategory,
$\sD_k$.  We define these bicategories below, and in the remainder of
this section we discuss bicategories with a triangulated structure,
taking $\sD_k$ as a motivating example.  Precise and concise
definitions can be found in \cite{leinster1998bb}, while
\cite{lack20072cc} provides a more expanded guide.

\sbs{ Rings and Modules} The 0-cells of $\sM$ are rings, and for any
rings $A$ and $B$, $\sM(A,B)$ is the category of $(B,A)$-bimodules.
So a $(B,A)$-bimodule $_B M_A$ is a 1-cell $\cell{M}{A}{B.}$ The
2-cells between two 1-cells $\cell{M}{A}{B}$ and $\cell{N}{A}{B}$ are
the bimodule maps $f:\;\, _B M_A \to \,_B N_A$.  Given three 0-cells,
$A$, $B$, and $C$, and two 1-cells, $\cell{M}{A}{B}$ and
$\cell{L}{B}{C}$, the horizontal composite of $L$ with $M$ is written
$L \odot M : A \to C$.  Since $M$ is a $(B,A)$-bimodule, and $L$ is a
$(C,B)$-bimodule, $L \odot M$ is defined using the tensor product over
$B$; this produces a $(C,A)$-bimodule, as desired:
\[
L \odot M = L \otimes_B M.
\]

A bicategory has, for each 0-cell, $A$, a unit 1-cell $A \to A$
satisfying usual unit conditions.  We denote this 1-cell also by $A$;
in the case of rings, this is $A$ regarded as an $(A,A)$-bimodule.

\sbs{ Closed structure for $\sM$} A \emph{closed structure} for a
bicategory defines right adjoints for $\odot$.  For the bicategory
$\sM$ the right adjoints for $- \odot M$ and $M \odot -$ are well
known.  The right adjoint to $- \otimes_B M$ is $\Hom_A(\,M_A,-)$,
homomorphisms of right $A$-modules, while the right adjoint to $M
\otimes_A -$ is $\Hom_B(_BM,-)$, homomorphisms of left $B$-modules.
To extend these notions to more general bicategories, the adjoint to
$- \otimes_B M$ is called ``target-hom'', or ``right-hom'', and
denoted $M \rt -$.  The adjoint to $M \otimes_A -$ is called
``source-hom'', or ``left-hom'', and denoted $- \lt M$.  The
adjunctions are written as
\[\begin{array}{ccc}
  \sM(V \odot M,W) & \iso & \sM(V,M \rt W)\\
  \sM(M \odot T,U) & \iso & \sM(T,U \lt M)
\end{array}\]

The existence of these adjoints is a closed structure for a general
bicategory, and we will use $\rt$ and $\lt$ to denote the right-hom
and left-hom functors in general.  The orientation of the triangles is
intended to help the reader remember the source and target of the
1-cells $M \rt W$ and $U \lt M$.  Here, $W$ and $M$ have common
source, $A$, and if $C$ denotes the target of $W$, then $M \rt W$ is a
1-cell $B \to C$.  Likewise, $M$ and $U$ have common target, $B$, and
if $D$ denotes the source of $U$, then $U \lt M$ is a 1-cell $D \to
A$.  A technically complete description of closed structures can be
found in \cite{may2006pht}.

\sbs{ Differential Graded $k$-algebras} The bicategory $DG_k$ is
similar to $\sM$, but here the 0-cells are differential graded
$k$-algebras, the 1-cells are DG bimodules, and the 2-cells are maps
of DG bimodules.  Like $\sM$, $DG_k$ also has a closed structure.  For
two 1-cells with common source, $P$ and $Q$, the target-hom $P \rt Q$
denotes the differential graded hom over their common source, and
likewise $\lt$ denotes the differential graded hom over common
targets.

\sbs{ The derived bicategory, $\sD_k$} For each pair of DG
$k$-algebras, $A$ and $B$, there is a model structure for $DG_k(A,B)$
which is a direct generalization of the standard model structure for
chain complexes over a ring, and which \autoref{Model} describes in
more detail.  We use this model structure to understand and work with
the derived category of $(B,A)$-bimodules, which we denote by
$\sD_k(A,B)$. There is a canonical functor from $DG_k(A,B)$ to
$\sD_k(A,B)$ and where it adds clarity to our exposition we let $\ga:
DG_k(A,B) \to \sD_k(A,B)$ denote this functor.  Note that, for a
$k$-algebra $S$, $\sD_k(S,k) = \sD_k(S)$ is the usual derived category
of (right) $S$-modules.  The model structure on each 1-cell category
satisfies the pushout product condition for $\odot$-composition
(\autoref{pop}), so $DG_k$ is a model bicategory.  The derived tensor
and hom give a closed bicategory structure for the categories
$\sD_k(A,B)$, so we regard $\sD_k$ as the derived bicategory of
$DG_k$.

\sbs{ Duality in bicategories}\label{dltybicat} Throughout this
subsection we consider fixed 1-cells $X: B \to A$ and $Y: A \to B$ in
a closed bicategory $\sB$.

\begin{defn}[Dual pair]
  We say $(X,Y)$ is a dual pair, or `$X$ is left-dual to $Y$' (`$Y$ is
  right-dual to $X$'), or `$X$ is right-dualizable' (`$Y$ is
  left-dualizable') to mean that we have 2-cells

  \[
  \eta :A \to X \odot Y \text{\quad and \quad} \epz: Y \odot X \to B
  \]

  such that the following composites are the respective identity 2-cells.

  \[
  X \iso A \odot X \fto{\eta \odot \id} X \odot Y \odot X \fto{\id
    \odot \epz} X \odot B \iso X
  \]

  \[
  Y \iso Y \odot A \fto{\id \odot \eta} Y \odot X \odot Y \fto{\epz
    \odot \id} B \odot Y \iso Y
  \]
\end{defn}

\begin{defn}[Base and cobase for a dual pair]
  When $(X,Y)$ is a dual pair in a bicategory $\sB$, we term the
  source of $X$ (the target of $Y$) the \emph{base} of the dual pair,
  and we term the source of $Y$ (the target of $X$) the \emph{cobase}
  of the dual pair.  Thus, the evaluation map of the dual pair is a
  two-cell from $Y \odot X$ to the base 1-cell, and the coevaluation
  (unit) is a two-cell from the cobase 1-cell to $X \odot Y$.  
\end{defn}

\begin{defn}[Invertible pair]\label{invDefn}
  A dual pair $(X,Y)$ is called invertible if the maps $\eta$ and
  $\epz$ are isomorphisms.  Equivalently, the adjoint pairs described
  above are adjoint equivalences.
\end{defn}

Duality for monoidal categories has been studied at length, and
duality in a bicategorical context has been introduced in \cite[\S
16.4]{may2006pht}.  The definition of duality does not require $\sB$
to be closed, but we will make use of the following basic facts about
duality, some of which do require a closed structure on $\sB$.

\begin{prop}
  A 1-cell $X \elt \sB(A,B)$ is right-dualizable if and only if the coevaluation
  \[
  \nu: X \odot (X \rt A) \to X \rt X
  \]
  is an isomorphism.  Moreover, this is the case if and only if the
  map
  \[
  \nu_Z: X \odot (X \rt Z) \to X \rt (X \odot Z)
  \] 
  is an isomorphism for all 1-cells $Z$ with target $A$.
\end{prop}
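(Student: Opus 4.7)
The plan is to establish the cycle $(i) \Rightarrow (iii) \Rightarrow (ii) \Rightarrow (i)$, where $(i)$ is right-dualizability of $X$, $(ii)$ is $\nu$ being an isomorphism, and $(iii)$ is that $\nu_Z$ is an isomorphism for every appropriate $Z$.

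For $(i) \Rightarrow (iii)$, suppose $(X,Y,\eta,\epz)$ is a dual pair. The closed structure makes $X \rt -$ a right adjoint to $- \odot X$, and the dual pair data produce a second right adjoint $- \odot Y$ to $- \odot X$: its unit at $V$ is $V \iso V \odot A \fto{V \odot \eta} V \odot X \odot Y$ and its counit at $W$ is $W \odot Y \odot X \fto{W \odot \epz} W \odot A \iso W$; the triangle identities for this adjunction are exactly the two duality axioms. Uniqueness of right adjoints then yields a natural isomorphism $X \rt W \iso W \odot Y$, and substituting $W = Z$ and $W = X \odot Z$ identifies both source and target of $\nu_Z$ with $X \odot Z \odot Y$, and $\nu_Z$ itself with the identity. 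The implication $(iii) \Rightarrow (ii)$ is the specialization $Z = A$ together with the unitor $X \odot A \iso X$.

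For $(ii) \Rightarrow (i)$, I set $Y := X \rt A$. The counit $\epz : Y \odot X \to A$ is taken to be the closed-structure counit $\epz^{cl}_A$, namely the adjunct of $\id_Y$ under $\sB(Y \odot X, A) \iso \sB(Y, X \rt A)$. Let $u: B \to X \rt X$ be the closed-structure unit, that is, the adjunct of the unitor $B \odot X \iso X$, and define $\eta := \nu^{-1} \circ u$. The essential observation is that $\nu$ is, by its very definition, adjunct to $X \odot \epz : X \odot Y \odot X \to X$, equivalently $\epz^{cl}_X \circ (\nu \odot X) = X \odot \epz$. Precomposing with $\nu^{-1} \odot X$ gives $(X \odot \epz) \circ (\nu^{-1} \odot X) = \epz^{cl}_X$, so the first duality triangle $(X \odot \epz) \circ (\eta \odot X) = \id_X$ collapses to the closed-adjunction triangle $\epz^{cl}_X \circ (u \odot X) = \id_X$, which holds automatically.

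The main obstacle is the second duality triangle $(\epz \odot Y) \circ (Y \odot \eta) = \id_Y$. I would verify it by transposing across $\sB(-, Y) \iso \sB(- \odot X, A)$: since $\id_Y$ transposes to $\epz$, it suffices to show that the composite also transposes to $\epz$. Unwinding using $\eta = \nu^{-1} \circ u$, naturality of $u$, and the defining adjunct property of $\nu$ reduces the check to the remaining closed-adjunction triangle $(X \rt \epz) \circ \eta^{cl}_Y = \id_Y$. The argument is conceptually transparent once this reduction is made; the actual work is the bookkeeping of unitors and coherence cells in the bicategorical setting, rather than any genuine obstruction.
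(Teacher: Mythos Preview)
The paper does not supply its own proof of this proposition; it is listed among the ``basic facts about duality'' immediately after the definition of dual pair, with implicit reference to the cited literature on duality in monoidal categories and bicategories. So there is nothing in the paper to compare your argument against directly.

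Your argument is correct. For $(i) \Rightarrow (iii)$ the approach via uniqueness of right adjoints is standard; the only point you elide is why $\nu_Z$ corresponds to the identity (really, the associator) under the identification $X \rt W \iso W \odot Y$, but this is routine once you recall that the comparison isomorphism is itself adjunct to $W \odot \epz$. For $(ii) \Rightarrow (i)$, your reduction of the first triangle identity to the closed-adjunction triangle $\epz^{cl}_X \circ (u \odot X) = \id_X$ is exactly right. For the second triangle a slightly more direct route is available: by bifunctoriality of $\odot$, the adjunct of $(\epz \odot Y)(Y \odot \eta)$ rewrites as $\epz \circ \bigl( Y \odot \bigl[(X \odot \epz)(\eta \odot X)\bigr]\bigr)$, which collapses to $\epz$ by the already-established first triangle, so the original composite is $\id_Y$. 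Your route via transposition to the remaining closed-adjunction triangle reaches the same conclusion with a bit more bookkeeping, and there is indeed no genuine obstruction hidden in the coherence cells.
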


\begin{prop}\label{dltyAdj} Let $(X,Y)$ be a dual pair in $\sB$, with $X: B \to A$ and $Y: A
  \to B$.
\begin{enumerate}
\item For any 0-cell $C$, we have two adjoint pairs of functors, with
  left adjoints written on top:
  \[\cmxymat{
    {\sB}(A,C) \ar[r]<.4ex>^-{- \odot X} & {\sB}(B,C) \ar[l]<.4ex>^-{- \odot Y}
  }\]
  \[\cmxymat{
    {\sB}(C,A) \ar[r]<.4ex>^-{Y \odot -} & {\sB}(C,B) \ar[l]<.4ex>^-{X \odot -}
  }\]
  The structure maps for the dual pair give the triangle identities
  necessary to show that the displayed functors are adjoint pairs.\\

\item If $\sB$ is closed, then $Y$ is canonically isomorphic to $X \rt
  B$, and for any 1-cell $Z: B \to D$, the natural map $Z \odot (X \rt
  B) \to X \rt Z$ is an isomorphism.
\end{enumerate}
\end{prop}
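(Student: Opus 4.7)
The proof has two essentially separate parts, both of which should reduce to the structure maps $\eta$ and $\epz$ of the dual pair together with the defining adjunctions of the closed structure.

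For part (1), my plan is to construct the unit and counit of each adjunction by whiskering $\eta$ and $\epz$ with the identity 2-cell of the appropriate 1-cell. For $- \odot X \dashv - \odot Y$ acting on $\sB(A,C)$ and $\sB(B,C)$, I take the unit at $U \colon A \to C$ to be the composite $U \iso U \odot A \xrightarrow{\id \odot \eta} U \odot X \odot Y$ and the counit at $V \colon B \to C$ to be $V \odot Y \odot X \xrightarrow{\id \odot \epz} V \odot B \iso V$. The two triangle identities then follow by whiskering the two triangle identities in the definition of a dual pair on the left by $\id_U$ and $\id_V$ respectively, invoking only the coherence (unitor) constraints of $\sB$. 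The second adjunction $Y \odot - \dashv X \odot -$ is entirely parallel, with whiskering on the right.

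For part (2), the quickest route is uniqueness of right adjoints. By part (1), the functor $- \odot X \colon \sB(A,C) \to \sB(B,C)$ has right adjoint $- \odot Y$; by the closed structure it also has right adjoint $X \rt -$. So there is a natural isomorphism $V \odot Y \iso X \rt V$ of functors $\sB(B,C) \to \sB(A,C)$. Specializing to $C = B$, $V = B$ gives $Y \iso B \odot Y \iso X \rt B$. For the second statement, specialize instead to $C = D$ and $V = Z$ to obtain $Z \odot Y \iso X \rt Z$, then compose with $Z \odot (X \rt B) \iso Z \odot Y$ induced by the isomorphism just established. Equivalently and more directly, one can appeal to the Yoneda lemma in $\sB(A,D)$: both $\sB(-, Z \odot Y)$ and $\sB(-, X \rt Z)$ are naturally isomorphic to $\sB(- \odot X, Z)$ (the former by part (1), the latter by the closed structure), so the two objects are canonically isomorphic.

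The main technical point, which I would single out, is checking that the isomorphism $Z \odot (X \rt B) \iso X \rt Z$ constructed by these abstract manipulations coincides with the canonical comparison map $\nu_Z$ named in the statement, namely the adjunct of $Z \odot (X \rt B) \odot X \xrightarrow{\id \odot \epz'} Z \odot B \iso Z$, where $\epz' \colon (X \rt B) \odot X \to B$ is the evaluation counit of $- \odot X \dashv X \rt -$ at the unit 1-cell $B$. The verification amounts to tracing $\id_{X \rt B}$ and $\id_Y$ through the chain of adjunctions and using the defining correspondence $\epz \leftrightarrow (Y \iso X \rt B)$; this is a routine but slightly tedious triangle-identity chase on 2-cells, carried out in the bicategory at the level of associator and unitor constraints. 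I expect no conceptual obstacle there, only bookkeeping in the coherent system of natural isomorphisms supplied by $\sB$.
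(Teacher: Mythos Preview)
Your argument is correct and is the standard one.  Note, however, that the paper does not actually supply a proof of this proposition: it is stated as one of the ``basic facts about duality'' immediately after the reference to \cite[\S 16.4]{may2006pht}, and the sentence in part~(1) (``The structure maps for the dual pair give the triangle identities\dots'') is meant as a statement of the content, not as a separate claim to be verified.  So there is no paper proof to compare against; your write-up simply fills in what the paper leaves to the reader or to the cited reference.

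Your handling of part~(2) via uniqueness of right adjoints is exactly the expected route, and your caution about identifying the abstract isomorphism with the specific comparison map $\nu_Z$ is well placed: that check is indeed routine (trace the identity through the two adjunctions and use that the canonical isomorphism $Y \iso X \rt B$ corresponds under adjunction to $\epz$), but it is the only place one could go wrong.
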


The right-dualizable 1-cells in the bicategory $\sM$ are the
finitely-generated projective bimodules. More precisely, they are
finitely-generated projective as right-modules over their source (the
base of the duality).  \autoref{retDlz} shows that the retracts of
\emph{finite cell} bimodules (\autoref{cellmod}) are right-dualizable
in $\sD_k$, and \autoref{retDlz-converse} shows that the converse is
also true.

\sbs{ Triangulated bicategories}\label{trbicat} We recall first the
definitions of localizing subcategory and generator for a triangulated
category, and then give a definition (\ref{trbicatDef}) of
triangulated bicategory suitable for our purposes.  In particular,
under this definition $\sD_k$ is a triangulated bicategory.

\begin{defn}[Localizing subcategory]\ \\
  If $\sT$ is a triangulated category with infinite coproducts, a
  \emph{localizing} subcategory, $\sS$, is a full triangulated
  subcategory of $\sT$ which is closed under coproducts from $\sT$.
\end{defn}

\begin{rmk}
  This is equivalent to the definition for arbitrary triangulated
  categories of \cite{hovey1999mc}, (which requires that a localizing
  subcategory be thick) because a triangulated subcategory
  automatically satisfies the 2-out-of-3 property and because in any
  triangulated category with countable coproducts, idempotents have
  splittings.  See \cite[1.5.2, 1.6.8, and 3.2.7]{neeman2001tc} for details.
\end{rmk}

\begin{defn}[Triangulated generator]\label{trGen}\ \\
  A set, $\sP$, of objects in $\sT$ (triangulated category with
  infinite coproducts, as above) is a set of \emph{triangulated
    generators} (or simply \emph{generators}) if the only localizing
  subcategory containing $\sP$ is $\sT$ itself.
\end{defn}

\begin{defn}[Triangulated bicategory {\cite[\S 16.7]{may2006pht}}]  
  \label{trbicatDef}\ \\
  A closed bicategory $\sB$ will be called a \emph{triangulated
    bicategory} if for each pair of 0-cells, $A$ and $B$, $\sB(A,B)$
  is a triangulated category with infinite coproducts, and if the
  suspension, $\SI$, is a pseudofunctor (\autoref{psF}) on $\sB$, and
  furthermore the local triangulations on $\sB$ are compatible as
  described in the following two axioms.
\begin{itemize}
\item[(TC1)] For a 1-cell $\cell{X}{A}{B}$, there is a
  natural isomorphism
  \[
  \al: X \odot \SI A \to \SI X
  \]
  such that the composite below is multiplication by $-1$.
  \[
  \SI^2 A = \SI(\SI A) \fto{\al^{-1}} \SI A \odot \SI A \fto{\kga} \SI
  A \odot \SI A \fto{\al} \SI(\SI A) = \SI^2 A
  \]

\item[(TC2)] For any 1-cell, $W$, the functors $W \odot -$, $- \odot
  W$, $W \rt -$, and $- \rt W$ are exact.
\end{itemize}
\end{defn}

If $\sB$ is a triangulated bicategory and $P$, $Q$ are 1-cells in
$\sB(A,B)$, we emphasize that $\sB$ is triangulated by writing the
abelian group of 2-cells $P \to Q$ as $\sB[P,Q]$ and by writing the
graded abelian group obtained by taking shifts of $Q$ as $\sB[P,Q]_*$.
To emphasize the source and target of $P$ and $Q$, we may also write
$\sB(A,B)[P,Q]_*$, as in \autoref{Ri} (where we write $\sD_k(S)$
instead of $\sD_k(S,k)$).

\section{Proof of \autoref{RiP1}}\label{proof}
In this section we prove \autoref{RiP1}, which generalizes one
direction of Rickard's theorem to the case of DG $k$-algebras.  We
work in the closed triangulated bicategory $\sD_k$, beginning with a
few general statements.

\begin{defn}[$\odot$-detecting 1-cells]\label{odD}\ \\
  In any locally additive bicategory, $\sB$, a 1-cell $\cell{W}{A}{B}$
  is called \emph{$\odot$-detecting} if triviality for any 1-cell
  $\cell{Z}{C}{A}$ is detected by triviality of the composite $W \odot
  Z$.  That is, $\cell{Z}{C}{A}$ is zero if and only if $W \odot Z
  = 0$.  A collection of 1-cells, $\sE$, in $\sB(A,B)$ is called
  \emph{jointly $\odot$-detecting} if the objects have this property
  jointly; that is, $Z = 0$ if and only if $W \odot Z = 0$ for all $W
  \elt \sE$.
\end{defn}

\begin{rmk}
  If $\sB$ is a monoidal additive category with monoidal product
  $\odot$, the unit object is $\odot$-detecting.  In arbitrary locally
  additive bicategories, if $A \ne B$ then $\sB(A,B)$ may not have a
  single object with this property, but in relevant examples the
  collection of all 1-cells, ob$\sB(A,B)$, does have this property
  jointly.  As a counter-point to this remark, we have the following
  lemma.
\end{rmk}

\begin{lem}\label{odDlem}
  Let $\sB$ be a triangulated bicategory, and let $\cell{P}{A}{B}$ be a
  generator for $\sB(A,B)$.  If the collection of all 1-cells,
  $\sB(A,B)$, is jointly $\odot$-detecting, then $P$ is
  $\odot$-detecting.
\end{lem}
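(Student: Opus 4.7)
The plan is to exhibit the full subcategory
\[
\sS = \{W \in \sB(A,B) \mid W \odot Z = 0\}
\]
as a localizing subcategory of $\sB(A,B)$, so that containing the generator $P$ forces $\sS$ to be all of $\sB(A,B)$, at which point joint $\odot$-detection finishes the job. Concretely, suppose $Z : C \to A$ satisfies $P \odot Z = 0$; we want $Z = 0$. Then $P \in \sS$, and once we know $\sS$ is localizing, \autoref{trGen} gives $\sS = \sB(A,B)$, so $W \odot Z = 0$ for every $W \in \sB(A,B)$, and the joint $\odot$-detection hypothesis yields $Z = 0$.

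So the real content is verifying that $\sS$ is a full triangulated subcategory closed under arbitrary coproducts. Closure under suspension is immediate from (TC1): $\SI W \odot Z \iso \SI(W \odot Z) \iso \SI 0 \iso 0$. For the triangulated 2-out-of-3 property, apply the exact functor $- \odot Z$ (exactness by (TC2)) to any distinguished triangle $W' \to W \to W'' \to \SI W'$ to obtain a distinguished triangle in $\sB(C,B)$; if two of $W', W, W''$ lie in $\sS$, the long exact sequence forces the third term to vanish. For coproducts, the key point is that $- \odot Z$ is a left adjoint (the right adjoint is $Z \rt -$ by the closed structure), so it preserves coproducts; hence $\bigl(\coprod_i W_i\bigr) \odot Z \iso \coprod_i (W_i \odot Z) = 0$ whenever each $W_i \in \sS$.

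None of these steps looks like a genuine obstacle: each reduces to applying exactness and coproduct-preservation of the functor $- \odot Z$, which are built into the axioms of a triangulated bicategory and its closed structure. The only mild subtlety is pinning down that $\sS$ is genuinely a full \emph{triangulated} subcategory in the sense used in \autoref{trGen}, but the remark after the definition of localizing subcategory (invoking \cite{neeman2001tc}) tells us 2-out-of-3 and closure under coproducts are enough, since idempotents split automatically and thickness follows. So the proof is essentially a one-paragraph verification that $\sS$ is localizing, followed by the generator/joint-detection argument above.
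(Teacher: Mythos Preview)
Your proposal is correct and follows exactly the paper's approach: define $\sS = \{W : W \odot Z = 0\}$, verify it is localizing, use that $P$ generates to get $\sS = \sB(A,B)$, then apply joint $\odot$-detection. The paper simply asserts that $\sS$ is localizing without spelling out the verification you give via (TC1), (TC2), and the closed structure, so your argument is a more detailed version of the same proof.
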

\begin{proof}
  Given any 1-cell $\cell{Z}{C}{A}$ with $P \odot Z = 0$, let $\sS$ be
  the full subcategory of 1-cells, $\cell{W}{A}{B}$ for which $W \odot
  Z = 0$.  This is a localizing subcategory of $\sB(A,B)$, and by
  assumption $P \elt \sS$, so $\sS = \sB(A,B)$, and hence $Z = 0$.
\end{proof}

\begin{rmk}\label{odDrmk}
  Since the functors $P \odot -$ are exact, the property of $P \odot
  -$ detecting trivial objects is equivalent to $P \odot - $ detecting
  isomorphisms (meaning that a 2-cell $f$ is an isomorphism if and
  only if $P \odot f$ is so).
\end{rmk}

Now we turn to the proof.  Suppose $T$ is a chain complex of (right)
$S$-modules satisfying the dualizability and generator conditions of
\autoref{RiP1}, and let $E$ denote the DG $k$-algebra
$\Hom_S(T,T)$. One might call our first step `cobase extension', as we
describe how to extend the dualizable object $T$ to a dual pair with
base $S$ and cobase $E$.  The chain complex $T$ is a right DG-module
over $S$, and can be considered as a left module over the DG
$k$-algebra $E$.  We let $\wt{T}$ denote $T$ regarded as a 1-cell $S
\to E$, and let $T$ denote the 1-cell $S \to k$.  These 1-cells are
related by base change along the unit map (of DG $k$-algebras) $k \to
E$.  Restricting scalars on either the left or right of the DG
$k$-algebra $E$ gives rise to a dual pair $({_k E}, E_k)$, and the
1-cell $T : S \to k$ is recovered from $\wt{T} : S \to E$ as the
1-cell composite ${_kE} \odot \wt{T}$.  Moreover, $T \rt S$ is
recovered as the composite $(\wt{T} \rt S) \odot E_k$, and the
coevaluation map of $(T,T \rt S)$ is recovered from that of
$(\wt{T},\wt{T} \rt S)$.  In more common language, one might say that
$T \rt S$ is a right $E$-module, and the coevaluation map of $(T, T
\rt S)$ is a map of $E$-$E$ bimodules.

Because $T$ is (right-)dualizable in $DG_k(S,k)$, it follows that
$\wt{T}$ is (right-)dualizable in $DG_k(S,E)$.  \autoref{gaDlz}
below shows that, therefore, $\wt{T}$ is dualizable in
$\sD_k(S,E)$.  To finish the proof, we use this fact to show that,
because $T$ is a generator, $\wt{T}$ is invertible.  Then the
invertible pair $(\wt{T},\wt{T} \rt S)$ establishes an
equivalence of categories, as described in \autoref{invDefn}.

\begin{lem}\label{gaDlz}
  If $X$ is dualizable in $DG_k(A,B)$, then $\ga(X)$ is dualizable in
  $\sD_k(A,B)$.
\end{lem}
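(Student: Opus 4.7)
The plan is to transport the dualizability data from $DG_k$ to $\sD_k$ along the localization $\ga$, using that the derived tensor in $\sD_k$ is computed by the underived tensor in $DG_k$ on cofibrant representatives. Fix a dual $Y \elt DG_k(B,A)$ to $X$ together with structure maps $\eta : A \to X \odot Y$ and $\epz : Y \odot X \to B$ satisfying the two triangle identities, and pick cofibrant replacements $QX \fto{\hty} X$ and $QY \fto{\hty} Y$ in the respective 1-cell categories.

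The first key step is to produce a canonical isomorphism $\ga(X) \odot \ga(Y) \iso \ga(X \odot Y)$ in $\sD_k(A,A)$, and similarly $\ga(Y) \odot \ga(X) \iso \ga(Y \odot X)$ in $\sD_k(B,B)$. This is a Ken Brown argument applied to the left Quillen bifunctor $-\odot-$: the pushout-product condition of \autoref{pop} guarantees that tensoring with a cofibrant 1-cell is left Quillen, so both $\ga(QX \odot QY) \iso \ga(X \odot Y)$ and $\ga(QX \odot QY) \iso \ga(X) \odot \ga(Y)$ hold, the latter being the definition of the derived tensor in $\sD_k$.

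The second step is to transport the structure maps. Define
\[
\tilde\eta : \ga(A) \fto{\ga(\eta)} \ga(X \odot Y) \iso \ga(X) \odot \ga(Y),
\qquad
\tilde\epz : \ga(Y) \odot \ga(X) \iso \ga(Y \odot X) \fto{\ga(\epz)} \ga(B),
\]
and verify the two triangle identities for $(\tilde\eta, \tilde\epz)$. These follow from their counterparts for $(\eta,\epz)$ in $DG_k$ by applying $\ga$, splicing in the compatibility isomorphisms from the first step, and invoking their coherence with the bicategorical unit and associator.

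The main obstacle is precisely this coherence bookkeeping: the substance of the lemma is that $\ga$ behaves as a strong monoidal pseudofunctor from $DG_k$ to $\sD_k$ on the 1-cells participating in the duality, after which preservation of dual pairs is purely formal. Some care is also needed in handling the unit 1-cells $\ga(A)$ and $\ga(B)$ as units for the derived tensor, but this should be part of the model bicategory structure developed in \autoref{Model}.
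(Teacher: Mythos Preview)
Your approach is essentially the paper's: transport the dual pair along $\ga$ by exhibiting an isomorphism between the derived and underived tensor products, then verify the triangle identities formally.  The paper likewise treats the endgame as formal and cites \cite[III.1.9]{lewis1986esh} for it.

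There is, however, a genuine gap in your first key step.  The Ken Brown argument (or the pushout-product axiom, \autoref{pop}) only tells you that tensoring with a cofibrant 1-cell is left Quillen; from this you get $\ga(QX \odot QY) = \ga(X) \odot \ga(Y)$ by definition, but you do \emph{not} get $\ga(QX \odot QY) \iso \ga(X \odot Y)$.  The comparison map factors as
\[
QX \odot QY \to QX \odot Y \to X \odot Y,
\]
and while the first arrow is a weak equivalence by \autoref{cofib-monAx}, the second is a weak equivalence only if $-\odot Y$ preserves the weak equivalence $QX \to X$, i.e.\ only if $Y$ (or, arguing via the other factorization, $X$) is already cofibrant.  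Nothing in your argument supplies this.

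The paper fills exactly this gap as its first move: it invokes \autoref{dlz-cofib} to show that a right-dualizable 1-cell in $DG_k(A,B)$ is a retract of a finite free module over $A \otimes_k B^{op}$ and hence cofibrant.  With $X$ itself cofibrant no replacement of $X$ is needed, $X \odot -$ preserves all weak equivalences by \autoref{cofib-monAx}, and the single map $X \odot Q(X \rt A) \to X \odot (X \rt A)$ already gives the comparison isomorphism.  Once you insert the observation ``dualizable in $DG_k$ implies cofibrant,'' your argument and the paper's coincide.
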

\begin{proof}
  The dualizable objects in $DG_k(A,B)$ are retracts of
  finitely-generated projective (right-)modules over $A
  \otimes_kB^{op}$, and hence cofibrant.  This is shown by a standard
  dual-basis-type argument, and the interested reader will find the
  details in \autoref{dlz-cofib}.  Because $X$ is cofibrant, the
  functor $X \odot -$ preserves weak equivalences.  This also is a
  standard result, and a proof can be found in
  {\cite[III.4.1]{kriz1995oam}}.

  Recall that $\ga$ denotes the canonical functor $DG_k(A,B) \to
  \sD_k(A,B)$.  Let $Q(X \rt A)$ be a cofibrant replacement for $X \rt
  A$.  Then $\ga(X) \odot \ga (X \rt A) = X \odot Q(X \rt A)$, and
  $\ga(X \odot (X \rt A)) = X \odot (X \rt A)$, and even though $\ga $
  is not a strong monoidal functor, we nevertheless have an
  isomorphism in $\sD_k(B,B)$: $\ga(X) \odot \ga(X \rt A) \fto{\hty}
  \ga(X \odot (X \rt A))$.  It is a formality now to check that the
  duality relations hold for $\ga(X)$ and $\ga(X \rt A)$, and
  therefore $\ga(X)$ is dualizable in $\sD_k(A,B)$.  For those who
  wish to see it, this formal argument is given explicitly for
  monoidal categories in \cite[III.1.9]{lewis1986esh}.
\end{proof}

Applying the lemma to $\wt{T}$ we have, for any $C$, the adjoint pair
of functors induced by a dual pair (\autoref{dltyAdj}) shown below.
Because $E = \wt{T} \rt \wt{T}$, the unit of this adjunction is an
isomorphism--the inverse to the coevaluation map.
\[\cmxymat{
  {\sD}_k(E,C) \ar@<.4ex>^-{- \odot \wt{T}}[r] & **[r]{\sD}_k(S,C). \ar@<.4ex>^-{- \odot (\wt{T} \rt S)}[l]
}\]

We finish the proof of \autoref{RiP1} by showing that the counit
$eval: (\wt{T} \rt S) \odot \wt{T} \to S$ is an isomorphism in
$\sD_k(S,S)$. Since $k$ is the ground ring for our bicategory $\sD_k$,
the 1-cells of $\sD_k(S,k)$ are jointly $\odot$-detecting
(\autoref{odD}), and so the generator condition of \autoref{RiP1}
means that $T$ itself is $\odot$-detecting (\autoref{odDlem}).  Thus,
evaluation $(\wt{T} \rt S) \odot \wt{T} \to S$ is an isomorphism in
$\sD_k(S,S)$ if and only if the map $({_kE} \odot \wt{T}) \odot
(\wt{T} \rt S) \odot \wt{T} \fto{1 \odot \text{eval}} {_kE} \odot
\wt{T}$ is so (\autoref{odDrmk}).  The duality of $\wt{T}$ and $\wt{T}
\rt S$ implies that the composite below is the identity and the first
map, induced by the unit of the adjunction, is an isomorphism so the
second must be also.
\[ {_kE} \odot \wt{T} \fto{\iso} {_kE} \odot \wt{T}
\odot (\wt{T} \rt S) \odot \wt{T} \fto{1 \odot
  \text{eval}} {_kE} \odot \wt{T}
\]
Hence the second map in this composite is an isomorphism, and so the
counit for the dual pair $(\wt{T},\wt{T} \rt S)$ is an
isomorphism in $\sD_k(S,S)$, giving an equivalence of triangulated
categories, as we wished to show.
\[\cmxymat{
  {\sD}_k(E,C) \ar@<1ex>^-{- \odot \wt{T}}[r] & **[r]{\sD}_k(S,C) \ar@<1ex>^-{- \odot (\wt{T} \rt S)}_-{\hty}[l]\\
}\]
This equivalence is suitably natural in $C$, making it a strong
transformation of the represented pseudofunctors $\sD_k(E,-)$ and
$\sD_k(S,-)$.  A more complete picture of strong transformations and
their connection to the Yoneda Lemma for bicategories is described in
\autoref{yoneda}.

\sbs{ Rickard's theorem for spectra}\label{RiSp} In this subsection we
prove a result analogous to \autoref{RiP1}, but working instead with a
commutative $\bS$-algebra, $k$, and the bicategory $\sS_k$ of
$k$-algebras and their bimodules.  We extend our previous notation to
let $\sD_k$ denote the bicategory of derived categories for spectra.
One major difference is that instead of working with dualizability on
the level of model categories, as we have for algebraic derived Morita
theory, we shift to the notion of dualizability in the bicategory of
derived categories.  The principles and general approach are the same,
but the details must be modified slightly.  For example, the `cobase
extension' step in the algebraic case is nearly transparent, but
requires a lemma in the context of spectra.  One can prove results
about spectra which are dualizable on the model-categorical level but,
unlike the DG case, it is difficult to find examples of such spectra.
With a relative abundance of spectra which are dualizable in the
derived bicategory, we shift our focus in that direction.  For the
remainder of this section, we use the term `dualizable' to mean
dualizable in the bicategory $\sD_k$.

\begin{prop}\label{htyDlzP1}
  Let $A$ be a $k$-algebra, and let $T$ be a fibrant and cofibrant
  $A$-module, with endomorphism $k$-algebra $E = F_A(T,T)$.  If $T$
  has the following two properties, then $\sD_k(A)$ and $\sD_k(E)$ are
  equivalent categories.
  \begin{enumerate}
    \renewcommand{\theenumi}{\textit{\roman{enumi}}}
  \item $T$ is (right-)dualizable as an $A$-module.
  \item $T$ generates the triangulated category $\sD_k(A)$.
  \end{enumerate}
\end{prop}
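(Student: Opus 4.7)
The plan is to follow the proof of \autoref{RiP1}, working throughout in the derived bicategory $\sD_k$ for spectra. Let $E = F_A(T,T)$ and regard $T$ as a 1-cell $\wt{T}: A \to E$ via its left $E$-action; the original module $T: A \to k$ is recovered as ${_kE} \odot \wt{T}$, where $({_kE}, E_k)$ is the dual pair arising from the unit map $k \to E$ of commutative $\bS$-algebras. If I can show that $\wt{T}$ is right-dualizable in $\sD_k(A,E)$ and that the evaluation $\epz: (\wt{T} \rt A) \odot \wt{T} \to A$ is an isomorphism in $\sD_k(A,A)$, then $(\wt{T}, \wt{T} \rt A)$ is an invertible pair, yielding the desired equivalence $\sD_k(E) \hty \sD_k(A)$ (and more generally $\sD_k(E,C) \hty \sD_k(A,C)$ naturally in $C$, as a strong transformation of represented pseudofunctors).

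The primary obstacle is the cobase extension step, i.e.\ showing that $\wt{T}$ is right-dualizable in $\sD_k(A,E)$. In the DG setting this was handled by \autoref{gaDlz}, which exploited model-level cofibrancy; in the spectral context such a model-level lemma is not directly available, so I must argue in the derived bicategory. The right-dual of $\wt{T}$ should be $\wt{T} \rt A$, built from $F_A(T, A)$ equipped with its induced right $E$-action. The coevaluation should arise from the chain
\[
E \hty F_A(T,T) \hty \wt{T} \odot (\wt{T} \rt A),
\]
where the second equivalence is precisely the dualizability of $T$ in $\sD_k(A,k)$; the evaluation should be induced by descending the $k$-level evaluation through the cobase extension. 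The care required here is in verifying that these maps are $E$-linear on the relevant sides and satisfy the triangle identities, which reduces to tracing through the triangle identities for the $k$-level dual pair together with the compatibility of the $E$-actions coming from $E = F_A(T,T)$.

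With cobase extension in place, the remaining argument parallels the DG case closely. Because $k$ is the ground object, the 1-cells of $\sD_k(A,k)$ are jointly $\odot$-detecting, so by \autoref{odDlem} the generator hypothesis on $T$ implies that $T$ itself is $\odot$-detecting. The coevaluation $\eta: E \to \wt{T} \odot (\wt{T} \rt A)$ is an isomorphism by construction---it is inverse to the natural identification $\wt{T} \odot (\wt{T} \rt A) \hty \wt{T} \rt \wt{T} = E$---so the triangle identity expresses ${_kE} \odot \wt{T}$ as a retract of ${_kE} \odot \wt{T} \odot (\wt{T} \rt A) \odot \wt{T}$ via a map involving $1 \odot \epz$. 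By \autoref{odDrmk}, to conclude that $\epz$ itself is an isomorphism in $\sD_k(A,A)$ it suffices to check that $1 \odot \epz$ is so after smashing with ${_kE}$, and this follows from the triangle identity. Hence $(\wt{T}, \wt{T} \rt A)$ is invertible and the induced adjunction is an equivalence, as desired.
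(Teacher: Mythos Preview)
Your proposal is essentially correct and follows the same two-stage strategy as the paper: cobase extension to produce a dual pair $(\wt{T},\wt{D})$ over $E$ whose unit is an isomorphism, followed by the $\odot$-detection argument to upgrade the counit to an isomorphism. The second stage matches the paper almost verbatim.

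The one place where the paper is more careful than your sketch is the cobase extension itself. You write ``regard $T$ as a 1-cell $\wt{T}: A \to E$'' and then invoke the $k$-level dualizability isomorphism $\wt{T} \odot (\wt{T} \rt A) \hty F_A(T,T)$ as if it were immediately available in $\sD_k(E,E)$. The paper instead explicitly takes $\wt{T}$ to be a cofibrant replacement of $T$ in $\sS_k(A,E)$ and $\wt{D}$ a cofibrant replacement of $F_A(T,A)$ in $\sS_k(E,A)$, and then argues that the map $\wt{T} \odot \wt{D} \to E$ is an isomorphism in $\sD_k(E,E)$ \emph{because} restriction of scalars $\iota_L^*\iota_R^*$ creates weak equivalences and the restricted map is the known $k$-level coevaluation isomorphism. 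This is precisely the content you gesture at with ``verifying that these maps are $E$-linear on the relevant sides,'' but the mechanism---restriction creates weak equivalences, so an $E$-bilinear map that is a $k$-bilinear equivalence is already an $E$-bilinear equivalence---deserves to be made explicit, and the cofibrant replacements are what make the derived $\odot$ computable. (Minor slip: $E$ is a $k$-algebra, not in general a commutative $\bS$-algebra; the unit map $k \to E$ is a map of $k$-algebras.)
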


As with the algebraic version, our proof proceeds in two parts.  First
(`cobase extension') we show that a dual pair in $\sD_k$ between two
$k$-algebras can be extended to the endomorphism algebra of the left
dual, and that in so doing we produce a new dual pair whose unit is an
isomorphism.  This rough description is made precise in the statement
of \autoref{htyDlzCounit}, after introducing notation for the
restriction of scalars functors.  In the second part of our proof we
use the unit isomorphism of this new dual pair, together with a
generating condition, to detect that the evaluation map is also an
isomorphism (in $\sD_k$).  Hence the new dual pair is an invertible
pair, giving an equivalence of categories.

\begin{notn}
  Given a map of $k$-algebras $\io :B \to E$, we have two
  restriction-of-scalars functors: one for restriction of left
  modules, and another for restriction of right modules.  For any
  $k$-algebra $A$, We let $\io_L^* : \sS_k(A, E) \to \sS_k(A,B)$
  denote restriction on the left (target), and $\io_R^* : \sS_k(E,A)
  \to \sS_k(B,A)$ denote restriction on the right (source).  Both
  functors create weak-equivalences and fibrations.
\end{notn}

\begin{lem}\label{htyDlzCounit}
  Let $A$ and $B$ be $k$-algebras, and let $T$ be fibrant and
  cofibrant in $\sS_k(A,B)$, with endomorphism $k$-algebra $E =
  F_A(T,T)$.  If $T$ is (right-)dualizable in $\sD_k$, then there is a
  homotopy dual pair $(\wt{T},\wt{D})$ with base $A$ and cobase $E$
  whose unit is an isomorphism.  This dual pair extends $T$ in the
  sense that $\iota_L^*\wt{T} \hty T$, where $\iota : B \to E$ is the
  $k$-algebra map adjoint to the action of $B$ on $T$.
\end{lem}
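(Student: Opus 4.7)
The plan is to imitate the ``cobase extension'' argument from the proof of \autoref{RiP1}, taking extra care with the model-theoretic subtleties inherent to spectra. First, I would define $\wt{T}$ to be $T$ equipped with its canonical left $E$-action coming from the evaluation map $E \wedge T = F_A(T,T) \wedge T \to T$. Since $T$ is fibrant and cofibrant, $E$ is a strictly associative unital $k$-algebra and this action is strict, yielding an $E$-$A$-bimodule structure, i.e., a 1-cell $\wt{T} : A \to E$ in $\sS_k$. For the dual, I would take $\wt{D} := F_A(\wt{T}, A)$ as a 1-cell $E \to A$ with the canonical right $E$-action. By construction the $k$-algebra map $\iota : B \to E$ classifies the left $B$-action on $T$, so restricting along $\iota$ returns that action exactly; hence $\iota_L^*\wt{T} = T$ in $\sS_k(A, B)$ and in particular $\iota_L^*\wt{T} \simeq T$.

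Next, I would show that $(\wt{T}, \wt{D})$ is a homotopy dual pair in $\sD_k$ with base $A$ and cobase $E$. By the dualizability criterion (the proposition characterizing right-dualizable 1-cells via $\nu$) this reduces to showing the coevaluation
\[
\nu : \wt{T} \odot \wt{D} \to \wt{T} \rt \wt{T}
\]
is an isomorphism in $\sD_k(E, E)$. The target $\wt{T} \rt \wt{T} = F_A(\wt{T}, \wt{T}) = E$ is, by the very definition of $E$, the unit 1-cell at the 0-cell $E$. On underlying spectra, $\nu$ agrees with the coevaluation $T \odot F_A(T, A) \to F_A(T, T)$ for the given dual pair $(T, D)$ in $\sD_k(A, B)$. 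The latter is an equivalence by the hypothesis that $T$ is right-dualizable in $\sD_k$, and since weak equivalences in $\sS_k(E, E)$ are created by the forgetful functor to spectra, $\nu$ is itself a weak equivalence. Because $\wt{T} \rt \wt{T} = E$ is the unit 1-cell at the cobase, the canonical map $E \to \wt{T} \rt \wt{T}$ is the identity, and consequently the unit $\eta : E \to \wt{T} \odot \wt{D}$ of the dual pair is precisely $\nu^{-1}$, hence an isomorphism.

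The main technical obstacle is the strict-to-derived comparison under restriction of scalars along $\iota$: the derived composition $\wt{T} \odot \wt{D}$ in $\sS_k(A, E)$ requires a cofibrant replacement, and one must argue this replacement does not disturb the identification of $\nu$ with the coevaluation for $(T, D)$ in $\sD_k(A, B)$. Because fibrations and weak equivalences in each $\sS_k(A, X)$ are created by the forgetful functor to $k$-module spectra, restriction of scalars preserves the relevant homotopical structure; combined with the pushout-product compatibility of the model structure from \autoref{Model}, this gives the required compatibility and completes the argument.
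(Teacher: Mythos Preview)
Your approach follows the same overall strategy as the paper: lift $T$ to an $(E,A)$-bimodule, lift its canonical dual to an $(A,E)$-bimodule, and then detect that the resulting coevaluation is an equivalence by passing through the restriction functors $\iota_L^*$, $\iota_R^*$, which create weak equivalences.  The substantive difference lies in how the cofibrant replacements are handled.

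The paper \emph{defines} $\wt{T}$ and $\wt{D}$ to be cofibrant replacements of $T$ and $F_A(T,A)$ in $\sS_k(A,E)$ and $\sS_k(E,A)$, respectively.  It then uses the cofibrancy of $T$ in $\sS_k(A,B)$ and of $D$ in $\sS_k(B,A)$, together with the fact that $\iota_L^*$ and $\iota_R^*$ create acyclic fibrations, to produce explicit weak equivalences $T \xrightarrow{\ \simeq\ } \iota_L^*\wt{T}$ and $D \xrightarrow{\ \simeq\ } \iota_R^*\wt{D}$.  These comparisons are what justify identifying the map $\wt{T}\odot\wt{D}\to E$ in $\sD_k(E,E)$, after applying $\iota_L^*\iota_R^*$, with the known equivalence $T\odot D\to \iota_L^*\iota_R^*E$ in $\sD_k(B,B)$.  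You instead take $\wt{T}$ and $\wt{D}$ to be the point-set objects themselves and postpone all cofibrant replacement to the end.  Your final paragraph correctly names the resulting obstacle---that the derived $\odot$ requires replacements, and one must check compatibility with the coevaluation for $(T,D)$---but the sentence ``restriction of scalars preserves the relevant homotopical structure'' is not quite the right justification: restriction preserves fibrations and weak equivalences, but \emph{not} cofibrations, which is exactly why separate cofibrant replacements in $\sS_k(A,E)$ and $\sS_k(E,A)$ are needed and why an explicit lifting argument (as the paper gives) is required to compare them back to $T$ and $D$.  Once you supply that lifting argument, your proof and the paper's coincide; without it, the identification of the derived $\nu$ over $E$ with the derived coevaluation over $B$ is asserted rather than proved.
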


\begin{proof}
  Because $T$ is cofibrant and fibrant in $\sS_k(A,B)$, no
  replacements are necessary and $E$ is the derived endomorphism
  monoid of $T$.  The unit map $k \to E$ is obtained as the composite
  of algebra maps $k \to B \to E$.  Let $\wt{T}$ be a cofibrant
  replacement for $T$ in $\sS_k(A,E)$.  Recall that $T$ is cofibrant
  in $\sS_k(A,B)$, and hence has the LLP with respect to acyclic
  fibrations.  We construct $\wt{T}$ by the usual factorization of the
  map from the initial object, and the forgetful functor $\iota_L^*$
  creates weak equivalences and fibrations, so the lifting property
  for $T$ gives a weak equivalence $T \fto{\hty} \iota_L^*\wt{T}$.

  The canonical dual of $T$ is $F_A(T,A) = T \rt A \elt \sS_k(B,A)$,
  and we let $D$ denote a cofibrant replacement for $F_A(T,A)$ in
  $\sS_k(B,A)$, so that we have a weak equivalence $D \fto{\hty}
  F_A(T,A)$.  The canonical dual of $T$ has a right-action of the
  endomorphism $k$-algebra, $E$, and we let $\wt{D}$ be a cofibrant
  replacement for $F_A(T,A)$ in $\sS_k(E,A)$, constructed again by the
  usual factorization.  Since the forgetful functor $\io_R^*$ creates
  weak equivalences and fibrations, we have an acyclic fibration
  $\io_R^* \wt{D} \fto{\hty} F_A(T,A)$ in $\sS_k(B,A)$.  Because $D$ is
  cofibrant, the weak equivalence $D \fto{\hty} F_A(T,A)$ lifts with
  respect to acyclic fibrations and hence we have a weak equivalence
  $D \fto{\hty} \iota_R^*\wt{D}$.

  Now we show that $(\wt{T},\wt{D})$ is a dual pair in $\sD_k$.  The
  weak equivalences $\wt{T} \to T$ and $\wt{D} \to F_A(T,A)$ in
  $\sS_k(A,E)$ and $\sS_k(E,A)$, respectively, give maps 
  \[
  \wt{T} \odot \wt{D} \to T \odot F_A(T,A) \to E \text{ and } \wt{D}
  \odot \wt{T} \to F_A(T,A) \odot T \to A
  \]
  in $\sS_k(E,E)$ and $\sS_k(A,A)$, respectively.  Moreover, the first
  map is an isomorphism in $\sD_k(E,E)$ because its image under
  $\iota^*_L\iota^*_R$ is a composite of two isomorphisms in
  $\sD_k(B,B)$: 
  \[
  \iota^*_L\wt{T} \odot \iota^*_R\wt{D} \iso T \odot D \iso
  \iota^*_L\iota^*_RE.
  \]
  The inverse to this map gives the unit for the dual pair, and the
  duality diagrams commute because the corresponding diagrams for $T$
  and $F_A(T,A)$ do.  Hence the functors $- \odot \wt{T}$ and $- \odot
  \wt{D}$ induce an adjunction
  \[\cmxymat{
    {\sD}_k(A,C) \ar@<.4ex>[r]^-{- \odot \wt{T}} & {\sD}_k(E,C) \ar@<.4ex>[l]^-{- \odot \wt{D}}\\
  }\]
  and the unit of this adjunction is an isomorphism.
\end{proof}

\begin{lem}\label{htyDlzODD}
  Let $T$, $\wt{T}$, $\wt{D}$ be as in \autoref{htyDlzCounit}, with $B = k$.  If $T$ generates $\sD_k(A,k)$, then $\wt{T}$ is $\odot$-detecting.
\end{lem}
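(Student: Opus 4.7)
The plan is to deduce the $\odot$-detecting property of $\wt{T}$ from the analogous property of $T$ by applying the restriction-of-scalars functor $\iota_L^* : \sD_k(-,E) \to \sD_k(-,k)$. The setup of \autoref{htyDlzCounit} already provides the necessary comparison datum, namely the weak equivalence $\iota_L^*\wt{T} \hty T$.

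First I would observe that $T$ itself is $\odot$-detecting in $\sD_k(A,k)$. Since $k$ is the ground ring for $\sD_k$, the full collection of 1-cells in $\sD_k(A,k)$ is jointly $\odot$-detecting (as remarked after \autoref{odD}). The hypothesis that $T$ generates $\sD_k(A,k)$ then lets \autoref{odDlem} promote this joint property to detection by $T$ alone. Next, given any 1-cell $\cell{Z}{C}{A}$ with $\wt{T} \odot Z \hty 0$ in $\sD_k(C,E)$, I would apply $\iota_L^*$ and invoke a projection-formula isomorphism
\[ \iota_L^*(\wt{T} \odot Z) \iso (\iota_L^*\wt{T}) \odot Z \hty T \odot Z \]
in $\sD_k(C,k)$. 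Since $\iota_L^*$ is additive and preserves zero objects, $T \odot Z \hty 0$, and by the first step $Z \hty 0$. Thus $\wt{T}$ detects triviality, as required.

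The main obstacle is justifying the projection-formula isomorphism $\iota_L^*(\wt{T} \odot Z) \iso (\iota_L^*\wt{T}) \odot Z$ at the derived level. On the point-set level this is essentially strict: restricting the outer $E$-action along $\iota : k \to E$ neither affects the balanced smash product over $A$ nor the residual right $C$-action. To upgrade this to the derived category one needs both sides to compute the correct derived composite; this is guaranteed by the cofibrant construction of $\wt{T}$ in $\sS_k(A,E)$ from \autoref{htyDlzCounit}, together with the fact (recorded in the notation introduced before \autoref{htyDlzCounit}) that $\iota_L^*$ creates weak equivalences and fibrations.
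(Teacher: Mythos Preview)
Your proof is correct and follows essentially the same approach as the paper's: first establish that $T$ is $\odot$-detecting via \autoref{odDlem} (using that $k$ is the ground object), then transfer this to $\wt{T}$ via the projection formula $\iota_L^*(\wt{T} \odot Z) \iso (\iota_L^*\wt{T}) \odot Z \hty T \odot Z$. The paper phrases the projection formula as the identity $\iota_L^*(-) = \iota_L^*(E) \odot -$ together with the fact that $\iota_L^*$ creates weak equivalences, but this is exactly what you unpack in your final paragraph.
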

\begin{proof}
  As in the algebraic case, this follows because $k$ is the ground
  object, and hence the collection of all 1-cells is jointly
  $\odot$-detecting (\autoref{odD}).  The generator condition
  therefore implies that $T$ itself is $\odot$-detecting
  (\autoref{odDlem}).  Now $\iota_L^*$ creates weak equivalences, and
  $\iota_L^*(-) = \iota_L^*(E) \odot -$, so $\wt{T}$ is also
  $\odot$-detecting.
\end{proof}

Using Lemmas \ref{htyDlzCounit} and \ref{htyDlzODD}, we finish the
proof of \autoref{htyDlzP1} as in the algebraic case.  Both the
composite and the first map displayed below are isomorphisms, and
hence the second map is also an isomorphism.  But the second map is
$\wt{T} \odot -$ applied to the counit, and since $\wt{T}$ is
$\odot$-detecting, the counit of the dual pair is therefore an
isomorphism in $\sD_k(A,A)$.
\[
E \odot \wt{T} \to \wt{T} \odot \wt{D} \odot \wt{T} \to \wt{T} \odot A
\]

\section{The Bicategorical Yoneda Lemma}\label{yoneda}
This section describes the Yoneda Lemma for bicategories.  Following
\cite{street1980fb}, we avoid giving the detailed definitions, and
instead give some general description followed by examples, which will
be our main interest.  As in \autoref{context}, we suggest
\cite{lack20072cc} or \cite{leinster1998bb} for further background.

\sbs { Pseudofunctors}\label{psF} If $\sA$ and $\sB$ are bicategories,
a pseudofunctor $\CMcal{P}:\sA \to \sB$ (also called a {\em morphism}) is the
bicategorical version of a functor.  It is a function on 0-cells and
for each pair of 0-cells a functor
\[\sA(A,B) \fto{\CMcal{P}_{AB}} \sB(\CMcal{P}A,\CMcal{P}B).\]
These functors are compatible with $\odot$-composition in that there
are 2-cell isomorphisms
\[\CMcal{P}_{BC}X' \odot \CMcal{P}_{AB}X \fto{\iso} \CMcal{P}_{AC}(X' \odot X)\]
satisfying the natural associativity and unit compatibility conditions.

Our focus is on the represented pseudofunctors.  These are a
bicategorical version of represented functors for categories, and they
take values in the bicategory $Cat$.  In this bicategory, the 0-cells
are categories, the 1-cells are functors, and the 2-cells are natural
transformations of functors.  For any bicategory $\sB$ with 0-cell
$A$, we have the represented pseudofunctor $\sB(A,-) : \sB \to Cat$.
For a 0-cell $E \elt \sB$, this pseudofunctor gives a category,
$\sB(A,E)$.  For a 1-cell $\cell{M}{E}{E'}$, we have the functor $M
\odot - : \sB(A,E) \to \sB(A,E')$ , and 2-cells $M \to M'$ give
natural transformations of such functors.  The compatibility
isomorphisms which make $\sB(A,-)$ a pseudofunctor are precisely the
associativity isomorphisms $(M_2 \odot (M_1 \odot -)) \iso (M_2 \odot
M_1) \odot -$.

In this context, our introductory remark `Morita theory is about
\emph{bi}modules' can be rephrased as the comment that Morita theory
is about represented pseudofunctors.  Our remark near the end of
\autoref{outline} that `standard derived equivalences preserve
bimodule structure' can be understood as the observation that standard
derived equivalences are transformations of represented
pseudofunctors.

\sbs{ (Strong) transformations} A transformation is a kind of
bicategorical natural transformation of functors.  A transformation of
two represented pseudofunctors, $\sB(B,-)$ and $\sB(A,-)$ is given by
\begin{enumerate}
\item A family of functors $F_C : \sB(B,C) \to \sB(A,C)$.  These are
  the \emph{components} of $F$.
\item For each 1-cell $C \fto{K} C'$, a natural transformation which,
  for 1-cells $X \elt \sB(B,C)$, has component 2-cells
  \[
  K \odot F_C(X) \to F_{C'}(K \odot X)
  \]
  natural in $K$ and $X$, with standard associativity and unit
  compatibilities; namely that the following diagrams commute, with
  $K$ and $X$ as above, and $L \elt \sB(C', C'')$.
  \[\cmxymat{
    L \odot K \odot F_C(X) \ar[dr] \ar[r] & L \odot F_{C'} (K \odot X) \ar[d] \\
    & F_{C''}(L \odot K \odot X)
  }
  \hspace{4pc}
  \cmxymat{
    C \odot F_C(X) \ar[d]^-{\iso} \ar[r] & F_C(C \odot X) \ar[dl]^-{\iso}\\
    F_C(X) &
  }\]
 
\end{enumerate}

\begin{notation}
  In the following, we will frequently drop the subscripts on the
  components of our transformations since they may always be
  determined from context and they tend to make the text less
  readable.
\end{notation}

For developing Morita theory, our interest will be in {\em strong}
transformations; these are transformations for which the component
2-cells shown above are natural isomorphisms.  Restricting attention
to strong transformations is equivalent to restricting to
transformations which have object-wise adjoints (\autoref{odotRep}).
Since the equivalences Morita theory seeks to understand are, in
particular, adjoint pairs, this restriction of scope is necessary.
Similar ideas are considered for distributors in
\cite{fisherpalmquist1975mce} and \cite{borceux2002ac} and for
bialgebroids in \cite{szlachanyi2004mme}.  

The appropriate morphisms of transformations are called {\em
  modifications}, but we will not make any explicit reference
to them beyond the following definition.

\begin{defn}
  For two bicategories $\sA$ and $\sB$, $\kPs_s[\sA, \sB]$ denotes the
  bicategory whose 0-cells are pseudofunctors $\sA \to \sB$, 1-cells
  are strong transformations, and 2-cells are modifications.
\end{defn}

\begin{lem}[Yoneda \cite{street1980fb}]\label{YL}
  For a pseudofunctor of bicategories $\CMcal{P} : \sA \to Cat$, evaluation
  at the unit 1-cell for each 0-cell, $A$, of $\sA$ provides the
  components for an equivalence of categories
  \[
  \kPs_s[\sA,Cat](\sA(A,-),\CMcal{P}) \fto{\hty} \CMcal{P}A.
  \]
\end{lem}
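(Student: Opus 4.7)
The plan is to mimic the ordinary Yoneda lemma while carefully tracking the coherence data supplied by the pseudofunctor $\CMcal{P}$ and the strong transformations. In one direction, the evaluation functor sends a strong transformation $F : \sA(A,-) \to \CMcal{P}$ to the object $F_A(\id_A) \in \CMcal{P}A$, and sends a modification to its $A$-component evaluated on $\id_A$. This is manifestly functorial, so the real work is to construct a quasi-inverse and to exhibit natural isomorphisms with the two composites.

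For each object $X \in \CMcal{P}A$, I would build a strong transformation $\Phi_X : \sA(A,-) \to \CMcal{P}$ whose $C$-component is the functor sending $f \in \sA(A,C)$ to $\CMcal{P}(f)(X)$. The required structure 2-cell, for a 1-cell $K \in \sA(C,C')$ and $f \in \sA(A,C)$,
\[
\CMcal{P}(K)\bigl(\CMcal{P}(f)(X)\bigr) \fto{\iso} \CMcal{P}(K \odot f)(X),
\]
is provided directly by the pseudofunctor compatibility isomorphism of $\CMcal{P}$, and the associativity and unit axioms of a strong transformation reduce precisely to the coherence axioms of $\CMcal{P}$. On morphisms $X \to X'$ in $\CMcal{P}A$, applying $\CMcal{P}(f)$ componentwise yields a modification $\Phi_X \Rightarrow \Phi_{X'}$, giving a functor $\CMcal{P}A \to \kPs_s[\sA,Cat](\sA(A,-),\CMcal{P})$.

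To identify this as a quasi-inverse to evaluation, observe that $\Phi_X$ evaluated at $\id_A$ is $\CMcal{P}(\id_A)(X)$, which is canonically isomorphic to $X$ via the unit iso of $\CMcal{P}$, naturally in $X$. Conversely, given a strong transformation $F$, I would exhibit an invertible modification $F \Rightarrow \Phi_{F_A(\id_A)}$ whose $C$-component at an object $f \in \sA(A,C)$ is the composite
\[
\CMcal{P}(f)\bigl(F_A(\id_A)\bigr) \fto{\iso} F_C(f \odot \id_A) \fto{\iso} F_C(f),
\]
where the first isomorphism comes from the structure 2-cell of $F$ (applied to the 1-cell $f$ and the object $\id_A \in \sA(A,A)$, inverted since $F$ is strong), and the second from the unit coherence of $\sA$ together with functoriality of $F_C$.

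The main obstacle is purely coherence-theoretic: one must check that the components above satisfy the naturality condition required of a modification, and that the two composites of the constructed functors are naturally isomorphic to identities. Both verifications are diagram chases that combine the strong-transformation axioms of $F$ with the unit and associativity coherence of $\CMcal{P}$ and of $\sA$. Nothing here is conceptually subtle — each square commutes by unpacking a single coherence axiom — but the total amount of 2-cell bookkeeping is substantial, which is why Street \cite{street1980fb} and later expositors typically record the statement and leave the verification implicit. Since the rest of the paper only uses the statement, I would adopt the same convention.
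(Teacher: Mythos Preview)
Your proposal is correct and matches the paper's own treatment: the paper does not prove the lemma either, citing Street, and the one-line sketch it offers after \autoref{MoII}---that for a strong transformation $S$ one has $S_C(Z) \iso S_C(Z \odot A) \iso Z \odot S_A(A)$, so $S$ is determined by $S_A(A)$---is exactly the inverse of the invertible modification you construct. Your decision to record the construction and leave the coherence checks implicit is precisely what the paper does.
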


\begin{cor}[Morita II]\label{MoII}
\[\kPs_s[\sA,Cat](\sA(A,-),\sA(B,-)) \fto{\hty} \sA(B,A)\]
That is, strong transformations $\sA(A,-) \to \sA(B,-)$ are given
(precisely) by $\odot$-composition with a 1-cell $B \to A$.  In
particular, strong transformations which induce equivalences $\sA(A,C)
\simeq \sA(B,C)$ for all 0-cells $C$ are given by invertible 1-cells
$B \to A$.
\end{cor}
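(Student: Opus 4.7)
The plan is to deduce \autoref{MoII} by specializing the bicategorical Yoneda Lemma (\ref{YL}) to a represented target pseudofunctor. I would take the pseudofunctor $\CMcal{P} : \sA \to Cat$ of \ref{YL} to be $\sA(B,-)$; this is itself a represented pseudofunctor by construction, so the Yoneda equivalence
\[
\kPs_s[\sA,Cat](\sA(A,-),\CMcal{P}) \fto{\hty} \CMcal{P}A
\]
specializes to the asserted equivalence $\kPs_s[\sA,Cat](\sA(A,-),\sA(B,-)) \fto{\hty} \sA(B,A)$, with the forward arrow given by evaluating a strong transformation at the unit $1$-cell of $A$.

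Next I would identify the inverse of this equivalence with the $\odot$-composition construction. Given a $1$-cell $M \elt \sA(B,A)$, define a strong transformation $F^M : \sA(A,-) \to \sA(B,-)$ whose component at a $0$-cell $C$ is the functor $F^M_C = - \odot M : \sA(A,C) \to \sA(B,C)$, with structure $2$-cells supplied by the associativity isomorphisms $K \odot (X \odot M) \iso (K \odot X) \odot M$ of $\sA$. These satisfy the required associativity and unit coherences because they do so in $\sA$, so $F^M$ is a bona fide strong transformation. The Yoneda map sends $F^M$ to $F^M_A(A) = A \odot M \iso M$, confirming that $M \mapsto F^M$ realizes the inverse equivalence of \ref{YL} in this setting. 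Naturality in $M$ and the fact that modifications $F^M \to F^{M'}$ correspond to $2$-cells $M \to M'$ are both immediate from the Yoneda Lemma.

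Finally, for the statement about equivalences, suppose a strong transformation $F : \sA(A,-) \to \sA(B,-)$ has components that are equivalences of categories $\sA(A,C) \hty \sA(B,C)$ for every $0$-cell $C$. Choose quasi-inverses $G_C$ componentwise; the natural isomorphisms $G_C F_C \hty \id$ and $F_C G_C \hty \id$ together with the structure $2$-cells of $F$ upgrade $(G_C)$ to a strong transformation $G : \sA(B,-) \to \sA(A,-)$. By the already-established part of the corollary, $F \iso F^M$ for $M \iso F_A(A) \elt \sA(B,A)$ and $G \iso F^N$ for $N \iso G_B(B) \elt \sA(A,B)$. The two composite strong transformations $F^M \com F^N$ and $F^N \com F^M$ are $\odot$-composition with $M \odot N$ and $N \odot M$ respectively, and they are equivalent to the identity transformations on $\sA(B,-)$ and $\sA(A,-)$. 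Applying the Yoneda equivalence once more (now to modifications) transports these equivalences to isomorphisms $M \odot N \iso B$ and $N \odot M \iso A$, exhibiting $(M,N)$ as an invertible pair.

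The only real work beyond quoting \ref{YL} is the verification that the candidate inverse strong transformation $G$ in the last step can actually be assembled from the componentwise quasi-inverses in a way compatible with the structure $2$-cells of $F$; this is the main obstacle but is routine, since the requisite coherence $2$-cells are forced by the equivalence data and the pseudo-naturality of $F$. Everything else is a direct application of the bicategorical Yoneda Lemma.
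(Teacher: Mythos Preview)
Your proposal is correct and follows essentially the same approach as the paper: apply the Yoneda Lemma \ref{YL} with $\CMcal{P} = \sA(B,-)$, and observe that the inverse sends $M$ to the strong transformation $- \odot M$. The paper's proof is terser---it merely records the key isomorphism $S_C(Z) \iso S_C(Z \odot A) \iso Z \odot S_A(A)$ and leaves the ``in particular'' clause about invertible $1$-cells implicit---whereas you spell out the argument that componentwise quasi-inverses assemble into a strong transformation; your added detail there is correct and does not diverge in method.
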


The essential point of the proof, as in the 1-categorical case, is the
observation that for a strong transformation, $S$, and a 1-cell $\cell{Z}{A}{C}$,
\[
S_C(Z) \iso S_C(Z \odot A) \iso Z \odot S_A(A)
\]
so that, for any $C$, the functor $S_C$ is determined by $S_A(A)$, an
object in the category $\sA(B,A)$.  Natural transformations of these
functors are determined by morphisms in $\sA(B,A)$.

This equivalence can be read with various emphases, yielding various
interpretations.  One possible interpretation would take strong
transformations or strong equivalences as objects of interest and take
the equivalence as a characterization of these objects--they can be
only those transformations arising as $\odot$-composition with a
1-cell.  A complementary interpretation takes the transformations
given by $\odot$-composition as the basic objects of interest, as in
the case of the standard derived equivalences for derived categories
of DG $k$-algebras.  From this point of view, the equivalence is an
assurance that functors arising in this way are no less, and no more,
than the strong transformations.

In the presence of a closed structure for our bicategory, we have a
further interpretation.  Functors given by $\odot$-composition with a
1-cell are naturally enriched over an ambient closed structure; in the
following proposition, we formalize what is meant by a family of
functors enriched over the internal hom, and show that such functors
are necessarily the family of components of a transformation.

\begin{prop}\label{rtF}
  Let $A$ and $B$ be 0-cells of a closed bicategory $\sB$, and let $F$
  be a family of functors
  \[F_C : \sB(A, C) \to \sB(B, C)\] 
  for 0-cells $C$.  The following are equivalent.
\begin{enumerate}
\item \label{ect1} 
  For any $C$, and any 1-cells $T \elt \sB(A,C_1)$, $U \elt
  \sB(A,C_2)$, $V \elt \sB(A,C_3)$ there are 2-cells
  \[T \rt U \to FT \rt FU \text{ in } \sB(C_1,C_2)\] and
  \[U \rt V \to FU \rt FV \text{ in } \sB(C_2,C_3).\] 
  These 2-cells are natural in $T$, $U$, and $V$, preserve units, and
  commute with composition, in the sense described by the following.
\[\begin{array}{cc}
\cmxymat{
(U \rt V) \odot (T \rt U) \ar[r]^-{\text{comp}} \ar[d] & T \rt V \ar[d]\\
(FU \rt FV) \odot (FT \rt FU) \ar[r]^-{\text{comp}} & FT \rt FV
}
&
\cmxymat{
T \rt T \ar[r] & FT \rt FT \\
C_1 \ar[u]^-{} \ar[ur]_-{\text{adj. to unit}}
}
\end{array}\]

\item \label{ect2} The family $F$ is the family of components for a
  transformation of represented pseudofunctors.  That is, for any
  1-cells $X \elt \sB(A,C_1)$ and $K \elt \sB(C_1,C_2)$, there are 2-cells
  \[K \odot F(X) \to F(K \odot X) \text{ in } \sB(B,C_2).\] 
  These 2-cells are natural in $K$ and $X$, and associative and
  unital.
\end{enumerate}
\end{prop}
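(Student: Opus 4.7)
The plan is to exhibit the two structures as mates under the tensor-hom adjunction $\sB(V \odot M, W) \iso \sB(V, M \rt W)$ that defines the closed structure, and then show that the coherence axioms on each side correspond under this adjunction.

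First, for $(\ref{ect2}) \Rightarrow (\ref{ect1})$: given the transformation data $\ta_{K,X}: K \odot F(X) \to F(K \odot X)$, I would construct the enrichment 2-cell as the adjoint of the composite
\[
(T \rt U) \odot FT \fto{\ta_{T\rt U,\, T}} F\bigl((T \rt U) \odot T\bigr) \fto{F(\epz)} FU,
\]
where $\epz$ is the counit (evaluation) of the $(T \rt -)$ adjunction. Under the adjunction $\sB(C_1, FT \rt FU) \iso \sB(FT \odot (T \rt U), FU)$--wait, I should be careful with the handedness. The adjunction $\sB(V \odot M, W) \iso \sB(V, M \rt W)$ gives, setting $V = T \rt U$, $M = FT$, $W = FU$, a 2-cell $T \rt U \to FT \rt FU$. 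Conversely, for $(\ref{ect1}) \Rightarrow (\ref{ect2})$: given the enrichment map, take the coevaluation $K \to X \rt (K \odot X)$ (adjoint to the identity of $K \odot X$), compose with the enrichment map $X \rt (K \odot X) \to FX \rt F(K \odot X)$, and take the adjoint of the result to obtain $K \odot FX \to F(K \odot X)$.

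The next step is to check these two constructions are mutually inverse. This is a diagram chase using the triangle identities of the internal hom adjunction, and it should be essentially formal.

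The real work, and the main obstacle I expect, lies in verifying that the coherence axioms correspond. Specifically, I must show:
\begin{enumerate}
\item The naturality of $\ta_{K,X}$ in $K$ and $X$ corresponds to naturality of $T \rt U \to FT \rt FU$ in $T$ and $U$.
\item The unit axiom for the transformation (compatibility with the identity 1-cell) corresponds to the unit axiom for enrichment ($T \rt T \to FT \rt FT$ sends the adjoint of the unit to the adjoint of the unit).
\item The associativity axiom for the transformation (the pentagon with $L \odot K \odot F(X)$) corresponds to the composition-preservation axiom (the square involving comp and $(U \rt V) \odot (T \rt U)$).
\end{enumerate}
The associativity/composition correspondence is the nontrivial piece. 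The strategy is to unpack both sides using the triangle identities and the definition of internal composition, which is itself defined via the adjunction as the adjoint of $(U \rt V) \odot (T \rt U) \odot T \to (U \rt V) \odot U \to V$. Once one writes out the adjoint of the pentagon axiom and applies the adjoint of $\odot$-associativity, it should match the composition diagram; this is essentially the argument that internal enrichment and action by $\odot$ are two faces of the same adjunction.

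Naturality and the unit axiom are essentially direct applications of the unit/counit triangle identities, so they pose no real difficulty. Throughout I will use that the closed structure is assumed on $\sB$ so that the adjunctions $- \odot M \dashv M \rt -$ exist and are natural in all arguments, as recalled in \autoref{context}.
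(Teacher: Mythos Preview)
Your proposal is correct and follows essentially the same approach as the paper: both directions are constructed via the tensor--hom adjunction, with $(\ref{ect2})\Rightarrow(\ref{ect1})$ given as the adjoint of $(T\rt U)\odot FT \to F((T\rt U)\odot T)\fto{F(\epz)} FU$, and $(\ref{ect1})\Rightarrow(\ref{ect2})$ given by applying the enrichment map to the coevaluation $K\to X\rt(K\odot X)$ and then evaluating. The paper handles the coherence checks in a single sentence (``associativity for the structure 2-cell is reduced to the given commutativity with composition; unitality follows from the unit condition''), whereas you outline them more explicitly, but the content is the same.
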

\begin{proof}
  Given maps as in (\ref{ect1}), and 1-cells $K$ and $X$ as in
  (\ref{ect2}), we describe the structure 2-cell
  \[
  K \odot F(X) \to F(K \odot X)
  \]
  as the following composite:
  \[
  K \odot F(X) \fto{\text{adj. to } \id_{K \odot X}} \left[ X \rt (K
    \odot X) \right] \odot F(X) \fto{F} \left[ F(X) \rt F(K \odot X)
  \right] \odot F(X) \fto{\text{eval}} F(K \odot X).
  \]
  Using the definition of the map, associativity for the structure
  2-cell is reduced to the given commutativity with composition.
  Unitality follows from the unit condition above.

  The situation is exactly reversed for the converse.  Given maps as
  in (\ref{ect2}) and 1-cells $T$ and $U$ as in (\ref{ect1}), we
  describe
  \[
  T \rt U \fto{F} FT \rt FU
  \]
  as adjoint to the map
  \[
  (T \rt U) \odot FT \to F( (T \rt U) \odot T) \fto{F(\text{eval})}
  FU.\qedhere
  \]
\end{proof}

\autoref{rtF} shows that a transformation of represented
pseudofunctors can be interpreted as what one might call ``a natural
family of enriched functors''.  The following lemma is proved
similarly, and gives a specialized interpretation for the strong
transformations: families of left adjoints.
\begin{lem}\label{odotRep}
  For $F$ as above, the maps $K \odot F(X) \to F(K \odot X)$ are
  isomorphisms for all $K$ and $X$ if and only if $F$ has a family of
  right adjoints,
  \[\sB(A,C) \ot \sB(B,C): G_C\]
  and these adjoints have maps $K' \odot G(X') \to G(K' \odot X')$
  making $G$ into a transformation of represented pseudofunctors
  $\sB(B,-) \to \sB(A,-)$.  In other words, a transformation $F$ is a
  strong transformation if and only if it has a right adjoint
  transformation.
\end{lem}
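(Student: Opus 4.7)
The plan is to prove both implications by explicit constructions that mirror the two halves of the proof of \autoref{rtF}, together with a mates-style computation in the converse direction.

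For the forward direction, I would assume the structure 2-cells $\alpha_K : K \odot F(X) \to F(K \odot X)$ are isomorphisms. The Yoneda-style observation underlying the proof sketch of \autoref{MoII} says that evaluating $\alpha$ at $X = \mathbf{1}_A$ (with $K$ ranging over 1-cells $A \to C$) exhibits a natural isomorphism $F_C(X) \iso X \odot N$, where $N := F_A(\mathbf{1}_A) \elt \sB(B, A)$. The closed structure of $\sB$ then provides $G_C := - \lt N$ as a family of right adjoints via the adjunction $- \odot N \dashv - \lt N$, whose counit I denote $\epz$. The structure 2-cells on $G$ are defined in direct analogy with the reverse construction in the proof of \autoref{rtF}: set $\beta_K : K \odot G(Y) \to G(K \odot Y)$ to be the adjoint under $- \odot N \dashv - \lt N$ of
\[
(K \odot G(Y)) \odot N \iso K \odot (G(Y) \odot N) \fto{K \odot \epz} K \odot Y.
\]
Associativity and unit coherences for $\beta$ follow routinely from those of $\odot$ and from the triangle identities.

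For the converse, assume $F$ has objectwise right adjoints $G_C$ equipped with structure 2-cells $\beta_K$ making $G$ a transformation. From $\beta$ one constructs a candidate inverse $\tilde\beta_K : F(K \odot X) \to K \odot F(X)$ to $\alpha_K$, taking it to be the adjoint under $F \dashv G$ of
\[
K \odot X \fto{K \odot \eta} K \odot G(F(X)) \fto{\beta_K} G(K \odot F(X)),
\]
where $\eta$ is the unit of $F \dashv G$. The goal is to verify that $\alpha_K$ and $\tilde\beta_K$ are mutually inverse. Both composites should reduce, via the triangle identities for $F \dashv G$ together with the associativity and unit coherences of $F$ and $G$ as transformations, to the appropriate identity 2-cell; this then forces $\alpha_K$ to be an isomorphism.

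The principal obstacle I anticipate is the coherence bookkeeping in the converse direction: verifying that $\alpha_K \circ \tilde\beta_K$ and $\tilde\beta_K \circ \alpha_K$ collapse to identities requires a careful diagram chase combining the triangle identities of $F \dashv G$ with the unit and associativity axioms of the transformation structures on both $F$ and $G$. This parallels the style of the converse portion of the proof of \autoref{rtF}, and I would follow the same template.
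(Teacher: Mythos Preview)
Your proposal is correct and matches what the paper intends: it gives no explicit argument for this lemma, only the remark that it is ``proved similarly'' to \autoref{rtF}, and your two constructions (building $\beta_K$ by transporting the counit across the $F \dashv G$ adjunction in the forward direction, and building the candidate inverse $\tilde\beta_K$ as the mate of $\beta_K$ in the converse) are exactly the analogues of the two halves of that proof. One notational slip to fix: in the paper's conventions the right adjoint to $- \odot N$ is $N \rt -$, not $- \lt N$; since $N : B \to A$ and $Y : B \to C$ share the common source $B$, the 1-cell $N \rt Y : A \to C$ is defined while $Y \lt N$ is not.
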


\section{Practical Interpretation}\label{practical}
In this section, we return our focus to Morita theory.  The question
of when a derived equivalence is a \emph{standard} derived equivalence
is raised, but not answered, by Rickard's work.  \autoref{DSeg} shows
that a short answer to this question is ``not always'', and more
subtle answers have been explored in the literature of derived Morita
theory (see \cite{konig1998deg}, for example).  Our inspection of the
Yoneda Lemma yields a reinterpretation of this question, and another
approach to determining when one might give an affirmative answer; we
discuss this in \autoref{components}.  In \autoref{enrichment}, we
turn to the ambient enrichments which are present in both classical
and derived Morita theory.  We again use the ideas of the previous
section, this time to emphasize the relevance of enrichments to Morita
theory.  We follow this explanation with some examples, illustrating
how one might apply these interpretations in practice.

\sbs{ Components of (strong) transformations}\label{components} In
this subsection we let $\sB$ denote a closed bicategory, with the
example $\sB = \sD_k$ as our primary motivation.  Let $A$, $B$, and
$I$ be three fixed 0-cells of $\sB$; in our motivating example, we
take $I = k$.  Furthermore, suppose $f$ is simply a functor $\sB(A,I)
\to \sB(B,I)$.  Recalling \autoref{MoII} (Morita II), the question of
when $f$ is a \emph{standard} functor is equivalent to the question of
whether $f$ is a component of a transformation between pseudofunctors,
that is, whether there is a transformation $F: \sB(A,-) \to \sB(B,-)$
with $F_I = f$.  In particular, we seek to understand the case when
$f$ is an equivalence, and characterize when $f$ is a component of a
strong transformation.  As \autoref{odotRep} points out, $f$ being a
component of a strong transformation implies that its adjoint is
itself a component of a transformation.

In this situation, let $\zcb$ denote the full sub-bicategory of $\sB$
whose 0-cells are $A$ and $I$.  There are four 1-cell categories in
$\zcb$; these are the categories of 1-cells in $\sB$ from $A$ to $A$,
from $A$ to $I$, from $I$ to $A$, and from $I$ to $I$.  That is,
$\zcb(x,y) = \sB(x,y)$ for $x,y \elt \{A,I\}$.  We have the
represented pseudofunctor $\zcb(A,-) : \zcb \to Cat$, and we have also
the pseudofunctor $\sB(B,-): \zcb \to Cat$.  Since $B$ is not a 0-cell
of $\zcb$, $\sB(B,-)$ is un-represented, but it is nevertheless a
pseudofunctor on $\zcb$ and the Yoneda Lemma applies to describe
strong transformations $\zcb(A,-) \to \sB(B,-)$.  Using the Yoneda
lemma twice, we have the following two equivalences:
\[
\kPs_s[\zcb, Cat](\zcb(A,-),\sB(B,-)] \fto{\hty} \sB(B,A) \fot{\hty}
\kPs_s[\sB, Cat](\sB(A,-), \sB(B,-)).
\]

These two equivalences say in bicategorical language what is apparent
to one who considers the proof of the Yoneda Lemma: that strong
transformations are determined by their values on the unit 1-cell.
One could make this clearer by restricting $\zcb$ further to a single
0-cell, $A$, since for the equivalence above $I$ is irrelevant.  We
have chosen to include $I$ so that the equivalences above provide a
proof for the following.

\begin{cor}\label{Ycor}
  A functor $f : \sB(A,I) \to \sB(B,I)$ is a component of a strong
  transformation $\sB(A,-) \to \sB(B,-)$ if and only if it is a
  component of a strong transformation $F: \zcb(A,-) \to \sB(B,-)$.  That
  is, $f$ is a component of a strong transformation if and only if
  there is a functor $f' : \sB(A,A) \to \sB(B,A)$ and natural 2-cell
  isomorphisms $K \odot f'(X) \fto{\iso} f(K \odot X)$, with the
  apparent associativity requirement, for any 1-cells $X : A \to A$
  and $K: A \to I$.  In this case, the strong transformation $F$ is
  determined by its two components, $F_I = f$ and $F_A = f'$.
\end{cor}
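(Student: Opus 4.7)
The plan is to apply the bicategorical Yoneda Lemma (\autoref{YL}) twice in order to realize both
\[
\kPs_s[\zcb, Cat](\zcb(A,-),\sB(B,-)) \quad \text{and} \quad \kPs_s[\sB, Cat](\sB(A,-),\sB(B,-))
\]
as equivalent (via evaluation at the unit 1-cell $A$) to the category $\sB(B,A)$; this produces the two Yoneda equivalences displayed just before the corollary. A small point to check is that although $\sB(B,-)$ is not a represented pseudofunctor on $\zcb$ (since $B$ is not a 0-cell of $\zcb$), it remains a genuine pseudofunctor on $\zcb$ by restriction, so \autoref{YL} still applies.

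For the forward direction, suppose $f = F_I$ for some strong transformation $F : \sB(A,-) \to \sB(B,-)$. Restricting $F$ along the inclusion $\zcb \hookrightarrow \sB$ gives a strong transformation $F|_{\zcb} : \zcb(A,-) \to \sB(B,-)$ whose $I$-component remains $f$ and whose $A$-component is $F_A$; the required structure 2-cells are those of $F$ indexed by the 1-cells lying in $\zcb$. This exhibits $f$ as a component of a strong transformation of the claimed form, with $f' := F_A$.

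For the converse, suppose $F : \zcb(A,-) \to \sB(B,-)$ is a strong transformation with $F_I = f$. Set $M := F_A(A) \elt \sB(B,A)$. Via the right-hand Yoneda equivalence, $M$ determines a strong transformation $\widehat{F} : \sB(A,-) \to \sB(B,-)$ with components $\widehat{F}_C(Z) \iso Z \odot M$. It then remains to verify that $\widehat{F}_I \iso f$: for $Z \elt \sB(A,I) = \zcb(A,I)$, the structure 2-cells of $F$ give a natural isomorphism
\[
\widehat{F}_I(Z) \iso Z \odot M = Z \odot F_A(A) \fto{\iso} F_I(Z \odot A) \fto{\iso} F_I(Z) = f(Z),
\]
where the first isomorphism is the structure 2-cell of $F$ for the 1-cell $Z : A \to I$ applied to the unit $A \elt \zcb(A,A)$, and the second is the unit compatibility of the pseudofunctor $\sB(B,-)$.

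The ``that is'' clause is then obtained by unpacking the data of a strong transformation $\zcb(A,-) \to \sB(B,-)$: two components $f' := F_A$ and $f := F_I$, together with structure 2-cell isomorphisms indexed by 1-cells of $\zcb$, of which the essential nontrivial case bridging $F_A$ and $F_I$ is $K : A \to I$ with $X \elt \zcb(A,A)$, giving exactly the stated $K \odot f'(X) \fto{\iso} f(K \odot X)$. I expect the main obstacle to be essentially bookkeeping: tracking that the same object $M = F_A(A)$ mediates both Yoneda equivalences, so that the $I$-component of the reconstructed $\widehat{F}$ is naturally isomorphic to $f$. Beyond invoking \autoref{YL}, no substantive new construction is required.
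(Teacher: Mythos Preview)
Your proposal is correct and follows essentially the same approach as the paper: the paper's proof consists precisely of the two Yoneda equivalences displayed just before the corollary, linking both pseudofunctor categories through $\sB(B,A)$, together with the remark that these equivalences ``provide a proof'' because strong transformations are determined by their value on the unit 1-cell. Your write-up simply unpacks this in more detail (the restriction for the forward direction, and the reconstruction via $M = F_A(A)$ for the converse), which is exactly what the paper leaves implicit.
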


Dropping the condition that the transformation be strong, we can use
\autoref{rtF} in the preceding context to achieve a description in
terms of the internal hom.

\begin{cor}\label{Ecor}
  A functor $f: \sB(A,I) \to \sB(B,I)$ is a component of a
  transformation $\sB(A,-) \to \sB(B,-)$ if and only if there is a
  functor $f': \sB(A,A) \to \sB(B,A)$ and, for 1-cells $T,U \elt
  \sB(A,I)$ and $T',U' \elt \sB(A,A)$, there are 2-cells
  \begin{center}
  \begin{tabular}{ll}
    $T \rt U \to fT \rt fU$ & in $\sB(I,I)$\\
    $T' \rt U'  \to f'T' \rt f'U'$ & in $\sB(A,A)$\\
    $T \rt U' \to fT \rt f'U'$ & in $\sB(I,A)$\\
    $T' \rt U \to f'T' \rt fU$ & in $\sB(A,I)$\\
  \end{tabular}
  \end{center}
  subject to the compatibility with composition and units described in \autoref{rtF}(\ref{ect1}).
\end{cor}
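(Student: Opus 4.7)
The plan is to combine two results already in hand: the Yoneda-style reduction used in the proof of \autoref{Ycor}, and the enriched-equivalence characterization of \autoref{rtF}. The corollary is essentially the non-strong analogue of \autoref{Ycor}, so its proof should follow the same architecture, only substituting the $\rt$-language of \autoref{rtF} for the $\odot$-isomorphism language.

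First I would reduce to the sub-bicategory $\zcb$ on the two $0$-cells $\{A, I\}$ exactly as in \autoref{Ycor}. The Yoneda argument used there shows that any transformation $\sB(A,-) \to \sB(B,-)$ is determined by, and can be reconstructed from, its restriction $\zcb(A,-) \to \sB(B,-)$. A transformation on $\zcb$ consists of exactly two component functors, namely $F_A : \sB(A,A) \to \sB(B,A)$ and $F_I : \sB(A,I) \to \sB(B,I)$, plus structure $2$-cells indexed by the $1$-cells of $\zcb$. Since every $1$-cell of $\zcb$ has source and target in $\{A,I\}$, these structure $2$-cells organize into four families according to the source-target combinations $(A,A)$, $(A,I)$, $(I,A)$, $(I,I)$.

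Next I would apply \autoref{rtF} to this restricted setting. The proof of \autoref{rtF} uses nothing beyond the closed structure and the $\odot \dashv \rt$ adjunction, so it goes through for any full closed sub-bicategory; applied to $\zcb$ it produces a bijection between the structure $2$-cell families above and families of $\rt$-morphisms satisfying the composition and unit diagrams of \autoref{rtF}(\ref{ect1}), now with the indices $C_1, C_2, C_3$ ranging over $\{A, I\}$. Tracing through the four cases yields exactly the four $\rt$-families listed in \autoref{Ecor}: $C_1 = C_2 = I$ gives $T \rt U \to fT \rt fU$ in $\sB(I,I)$; $C_1 = C_2 = A$ gives $T' \rt U' \to f'T' \rt f'U'$ in $\sB(A,A)$; $C_1 = I$, $C_2 = A$ gives $T \rt U' \to fT \rt f'U'$ in $\sB(I,A)$; and $C_1 = A$, $C_2 = I$ gives $T' \rt U \to f'T' \rt fU$ in $\sB(A,I)$.

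The only real obstacle is bookkeeping: one must verify that the $\odot$-to-$\rt$ conversions of \autoref{rtF} produce the correct $2$-cells in each of the four index combinations, and that the listed four families exhaust the cases. Because $\zcb$ is a \emph{full} closed sub-bicategory of $\sB$, the relevant adjunctions defining $\rt$ are simply the restrictions of those used in \autoref{rtF}, so nothing genuinely new arises. Once one identifies $F_I = f$ and $F_A = f'$, the corollary is an immediate specialization of \autoref{rtF} combined with the Yoneda reduction of \autoref{Ycor}.
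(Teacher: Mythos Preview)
Your proposal is correct and matches the paper's argument: the paper's one-sentence justification of \autoref{Ecor} is precisely ``use \autoref{rtF} in the preceding context,'' where that context is the reduction to the two-object sub-bicategory $\zcb$ set up for \autoref{Ycor}. Your explicit unwinding of the four source--target combinations $(C_1,C_2)\in\{A,I\}^2$ is exactly what the paper records immediately afterward in \autoref{Ermk}.
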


\begin{rmk}\label{Ermk}
  In this context, the compatibility means precisely that all possible
  diagrams of the form below commute:
\[
\cmxymat{
  (U \rt V) \odot (T \rt U) \ar[d] \ar[r] & T \rt V \ar[d]\\
  (FU \rt FV) \odot (FT \rt FU) \ar[r] & FT \rt FV } \hspace{2pc}
\cmxymat{
  T \rt T \ar[r] & FT \rt FT \\
  unit \ar[u] \ar[ur]& }
\] 
Where each of $T$, $U$, and $V$ is taken to be either in $\sB(A,A)$ or
$\sB(A,I)$, `$unit$' is taken to be either $I$, or $A$, and $F$ is
taken to be either $f$ or $f'$, as appropriate.  Note that if all
three of the objects, $T$, $U$, $V$ are taken from the same category,
this condition is precisely the condition which makes $f$ and $f'$
enriched over $\rt$.  The first diagram says that the enrichment must
commute with the composition pairing, and the second diagram says that
the enrichment must preserve units.
\end{rmk}

We now turn to some more practical interpretations of
\autoref{yoneda}.  Combining these results with our previous
interpretation of Morita theory enables us to give a description of
the concepts at work in both classical and derived Morita theory.  We
follow this description with examples of \autoref{Ecor}, showing that,
at least in algebraic contexts, the four-part necessary condition can
be verified formally.

\sbs{ Enriched equivalences}\label{enrichment} \autoref{rtF} shows that the standard
Morita equivalences in a closed bicategory must be equivalences which
are enriched over the internal hom, and likewise that families of
equivalences which are enriched over the internal hom fit together to
form standard Morita equivalences.  As noted in
\autoref{enrichmentRmk}, left-adjoint functors (e.g.  equivalences)
between abelian categories are automatically enriched in abelian
groups, and this may be one reason that enrichment has been
under-appreciated in these contexts.  The topological Morita theorem
of Schwede and Shipley \cite{schwede2003smc} addresses \emph{spectral}
Quillen equivalences--Quillen equivalences enriched in spectra.
Likewise, To{\"e}n \cite{toen2007htd} works with DG categories, the
morphisms of which are enriched functors.  \autoref{rtF} shows that
focusing on enriched equivalences is inevitable.

In these papers the authors also address the important question of
what model-theoretic assumptions could be verified in practice and
would guarantee standard Morita equivalences on the derived level.
One possible lesson taught by \autoref{DSeg} is that certainly some
assumptions are necessary in general.  The results above, however, are
independent of model theory, applying to any closed bicategory.  They
indicate that Quillen equivalences which induce \emph{enriched}
transformations on the derived level are necessarily the appropriate
equivalences for the development of Morita theory.  This perspective
can offer an explanation for the results of \cite{dugger2007ted} in
particular.  There, and in related work, the technical notion of
\emph{additive} model category is introduced, and it is shown that
Quillen equivalences between additive model categories are necessarily
additive functors, just as in the classical situation.  The result,
therefore, is that a zig-zag of Quillen equivalences for which each
intermediate model category is \emph{additive} provides a well-behaved
notion of Morita equivalence for additive model categories.  Our
perspective would suggest that this can be extended to more general
enriched model categories, with the appropriate notion of Morita
equivalence in those settings being enriched Quillen functors. From
this point of view, \autoref{DSeg} is an expected example, and others
like it will be expected in applications for which Quillen functors
are not necessarily enriched.

\autoref{Ecor} shows that the property of enrichment may be identified
by considering only a specific special case, arising through our
restriction from the bicategory $\sB$ to the full sub-bicategory,
$\zcb$ generated by two 0-cells, $A$ and $I$.  In algebraic examples,
this four-part condition can be simplified even further.  We
demonstrate this by recalling the following two classical results,
with their proofs for reference.  They show that, in the case of
classical Morita theory, the condition in \autoref{Ecor} is automatic.

\begin{thm}[Morita II {\cite[7.18.26]{lam1999lmr}}]\label{lam}
  Let $R$ and $S$ be two rings, and let
  \[\cmxymat{
  f : \sM(R,\bZ) \ar[r]<.4ex> & {\sM}(S,\bZ): g \ar[l]<.4ex> 
  }\]
  be an equivalence between the categories of right $R$-modules and
  right $S$-modules.  Let $Q = f(R)$ and let $P = g(S)$.  Then there
  are natural bimodule structures making $P \elt \sM(R,S)$ and $Q \elt
  \sM(S,R)$.  Using these bimodule structures, there are natural
  isomorphisms of functors
  \[
  f \iso - \odot Q \text{\quad and \quad} g \iso - \odot P.
  \]
\end{thm}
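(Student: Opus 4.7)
The plan is to first endow $Q = f(R)$ with a compatible left $R$-action (symmetrically for $P$), then produce a natural transformation $\alpha : - \odot Q \to f$, verify it is an isomorphism on the generator $R$, and finally extend to all right $R$-modules by a free-presentation argument using the cocontinuity of both functors.

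\emph{Step 1 (bimodule structure).} Since $f$ is an equivalence between abelian categories, it is automatically additive (cf.\ \autoref{enrichmentRmk}) and induces a ring isomorphism on endomorphism rings. Using the canonical identification $R \iso \End_R(R)$ by left multiplication, the composite
\[
R \iso \End_R(R) \fto{f} \End_S(Q)
\]
equips $Q$ with a left $R$-action commuting with its right $S$-action, so $Q \in \sM(R,S)$. The same argument with $g$ gives $P \in \sM(S,R)$.

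\emph{Step 2 (candidate natural transformation, and value on $R$).} For each right $R$-module $M$, define
\[
\alpha_M : M \otimes_R Q \to f(M), \qquad m \otimes q \mapsto f(\tilde m)(q),
\]
where $\tilde m : R \to M$ is the right $R$-linear map $1 \mapsto m$. Right $S$-linearity is inherited from $Q$. The $R$-balance of $\alpha_M$ is exactly the statement that left multiplication by $r$ on $Q$ coincides with $f(\rho_r)$, where $\rho_r$ denotes left multiplication by $r$ on $R$; this is precisely the definition of the left $R$-action built in Step 1. Naturality in $M$ is immediate from $\widetilde{hm} = h \com \tilde m$. When $M = R$, after the canonical identification $R \otimes_R Q \iso Q$ the map $\alpha_R$ is the identity on $Q$, hence an isomorphism.

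\emph{Step 3 (extension to all modules).} Both $- \otimes_R Q$ and $f$ preserve arbitrary colimits: the former as a left adjoint, the latter because every equivalence preserves all colimits. Any right $R$-module $M$ admits a free presentation $R^{(J)} \to R^{(I)} \to M \to 0$, and $\alpha$ is an isomorphism on free modules by additivity and Step~2. The five lemma, applied to the induced diagram of cokernel sequences, shows that $\alpha_M$ is an isomorphism for every $M$. Running the symmetric argument with $g$ produces the isomorphism $g \iso - \odot P$.

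The main obstacle is the verification in Step 2 that the $R$-balance of the candidate map $\alpha_M$ matches the bimodule structure chosen for $Q$ in Step 1 — unwinding the fact that, under the ring isomorphism $R \to \End_S(Q)$ induced by $f$, the element $r$ acts as $f(\rho_r)$, so that $\widetilde{mr} = \tilde m \com \rho_r$ makes $\alpha_M$ well defined on the tensor product. Once this compatibility is in hand, both the isomorphism on $R$ and the extension to arbitrary $M$ are formal consequences of cocontinuity; no further model-theoretic or ring-theoretic input is required.
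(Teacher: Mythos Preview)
Your argument is correct. It differs in strategy from the paper's proof, which is worth noting.

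The paper argues via the adjunction and dualizability: it uses that $g$ is quasi-inverse (hence adjoint) to $f$ and that $P = g(S)$ is dualizable (finitely generated projective over $R$), obtaining the chain
\[
f(M) \iso \Hom_S(S,fM) \iso \Hom_R(gS,M) \iso M \otimes_R \Hom_R(P,R) \iso M \otimes_R Q.
\]
Here the third isomorphism is the dualizability of $P$, and the last is the case $M=R$ of the same chain.

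Your route instead builds an explicit natural transformation $\alpha: -\otimes_R Q \to f$ and detects that it is an isomorphism by checking it on the generator $R$ and propagating via free presentations using cocontinuity. This is essentially the Eilenberg--Watts mechanism (cf.\ \autoref{watts}), and it has the mild advantage of not invoking dualizability of $P$ or the identification $\Hom_R(P,R)\iso Q$; only right-exactness and preservation of coproducts are used. The paper's version, on the other hand, stays inside the duality/bicategorical formalism it has been developing and produces the isomorphism as a composite of canonical identifications rather than by a five-lemma argument. Both are standard; yours is a bit more hands-on, the paper's a bit more structural.
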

\begin{proof}
  The bimodule structures are recognized by the ring isomorphisms
  \[
  R \iso \Hom_R(R,R) \iso \Hom_S(fR,fR) \text{\quad and \quad} S \iso
  \Hom_S(S,S) \iso \Hom_R(gS,gS).
  \]
  The identification of $f$ is obtained by the following computation,
  using the fact that $g$ is an adjoint for $f$ and that $P$ is
  dualizable; the identification of $g$ is similar.  For $M \elt
  \sM(R, \bZ)$,
  \[
  f(M) \iso \Hom_S(S,fM) \iso \Hom_R(gS,M) \iso M \otimes_R
  \Hom_R(P,R) \iso M \otimes_R Q = M \odot Q.
  \]
\end{proof}

\begin{rmk}
  The proof here implicitly defines the functor $f'$ as the composite
  of the forgetful functor from $R$-$R$-bimodules to right $R$-modules
  with the functor $f$.  The computation above shows that the image of
  this composite lies in the subcategory of $S$-$R$-bimodules.
\end{rmk}

To relate the previous result to the following one, recall that
equivalences of abelian categories are, in particular, exact and
coproduct-preserving.
\begin{thm}[Watts \cite{watts1960ics}]\label{watts}
  Let $R$ and $S$ be rings, and let $f: \sM(R,\bZ) \to \sM(S,\bZ)$ be
  a functor from the category of right $R$-modules to the category of
  right $S$-modules.  If $f$ is right-exact and preserves direct sums,
  then there is a bimodule $C \elt \sM(S,R)$ and a natural isomorphism
  $f \iso - \odot C$.
\end{thm}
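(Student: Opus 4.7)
The plan is to construct the bimodule $C$ explicitly as $C = f(R)$, equip it with the required left $R$-action, and then use the hypotheses (right-exactness and coproduct preservation) together with a free presentation of $M$ to produce the natural isomorphism $f(M) \iso M \otimes_R C$.

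First I would endow $C = f(R)$ with a left $R$-action. The ring $R$ acts on itself by left multiplication, which gives a ring homomorphism $R \to \End_R(R)$ (where $\End_R(R)$ denotes right-$R$-linear endomorphisms, acting on the left). Applying the functor $f$ yields a ring homomorphism $\End_R(R) \to \End_S(f(R))$, and composing produces a ring map $R \to \End_S(C)$. Since $\End_S(C)$ acts on $C$ on the left while commuting with the right $S$-action, this gives $C$ the structure of an $(S,R)$-bimodule, as desired. With this bimodule structure, the canonical map $R \otimes_R C \to C = f(R)$ is an isomorphism not merely of right $S$-modules but of $(R,S)$-bimodules in the appropriate sense for the construction below.

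Next I would construct a natural transformation $\alpha : - \otimes_R C \to f(-)$. On $M = R$, take $\alpha_R$ to be the canonical isomorphism $R \otimes_R C \iso C = f(R)$. Since both $- \otimes_R C$ and $f(-)$ preserve arbitrary direct sums (the former always, the latter by hypothesis), $\alpha$ extends uniquely to any free right $R$-module $R^{(I)}$ and is an isomorphism there. For a general right $R$-module $M$, choose a free presentation
\[
R^{(J)} \fto{\partial} R^{(I)} \to M \to 0.
\]
Applying both $- \otimes_R C$ and $f(-)$ gives two right-exact sequences, since $- \otimes_R C$ is right-exact and $f$ is right-exact by hypothesis. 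The previously constructed $\alpha$ on free modules provides the outer two vertical isomorphisms in the resulting ladder, and then the universal property of cokernels (or equivalently the 5-lemma for right-exact sequences) produces the desired isomorphism $\alpha_M : M \otimes_R C \to f(M)$.

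Naturality in $M$ follows because any morphism $M \to M'$ lifts to a morphism of free presentations, and the constructions at the free level are manifestly natural. The main thing requiring care is verifying that the left $R$-action on $C$ constructed in the first step is precisely what makes the canonical map $R \otimes_R C \to f(R)$ natural with respect to left multiplication by elements of $R$, so that the identification propagates correctly through direct sums of copies of $R$. Once this compatibility is established, the remaining steps are diagram chases; no delicate hypothesis beyond right-exactness and coproduct preservation is needed, and the result is independent of any generating or projectivity assumptions on the objects involved.
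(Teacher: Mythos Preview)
Your proof is correct and is the classical direct argument for Watts' theorem: take $C = f(R)$, endow it with a left $R$-action via functoriality applied to left multiplications, build the comparison map on free modules using coproduct preservation, and extend to all modules via free presentations and right-exactness. One small point worth tightening: as written, $\alpha_M$ is constructed from a \emph{chosen} presentation, so naturality in $M$ (and independence of the presentation) is cleanest if you first give the direct formula $\alpha_M(m \otimes c) = f(\lambda_m)(c)$, where $\lambda_m : R \to M$ sends $r \mapsto mr$; this is manifestly natural and agrees with your construction on free modules, after which the presentation argument shows it is an isomorphism.

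The paper, however, takes a deliberately different route, using the bicategorical machinery developed in \autoref{yoneda}. Rather than constructing $C$ and $\alpha$ by hand, it observes that coproduct preservation makes $f$ automatically enriched over $\Hom_R$, uses this to build a companion functor $f' : \sM(R,R) \to \sM(S,R)$ satisfying the compatibility of \autoref{Ecor}, and thereby exhibits $f$ as a component of a transformation of represented pseudofunctors. The special adjoint functor theorem then supplies right adjoints to $f$ and $f'$, so by \autoref{odotRep} the transformation is strong, and \autoref{MoII} (the bicategorical Yoneda corollary) identifies it with $- \odot C$ for some $C \in \sM(S,R)$. Your argument is more elementary and self-contained; the paper's argument is less direct but is the whole point of including the theorem here---it shows that Watts' theorem is a formal instance of the bicategorical Yoneda lemma, illustrating the paper's thesis that classical Morita-type results are shadows of \autoref{MoII}.
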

\begin{proof}
  Since $f$ preserves direct sums, it is automatically enriched over
  $\Hom_R$.  If $T'$ is an $(S,R)$-bimodule, we observe that $f({_{\bZ}T}')$ has
  a natural $S$-module structure given by the map of abelian groups
  \[
  \Hom_S({_\bZ S},{_\bZ S}) \to \Hom_R({_\bZ T'},{_\bZ T'}) \to
  \Hom_S(f({_\bZ T'}),f({_\bZ T'}))
  \]
  and we define $f'(T')$ to be the $(S,S)$-bimodule whose underlying
  $(\bZ,S)$ bimodule is $f({_\bZ T'})$.  The categories of bimodules
  $\sM(R,R)$ and $\sM(S,R)$ are defined to be the subcategories of
  left $R$-modules in $\sM(R,\bZ)$ and $\sM(S,\bZ)$, respectively, and
  hence the compatibility conditions of \autoref{Ecor} follow
  formally.  This shows that $f$ and $f'$ are components of a
  transformation $\sM(R,-) \to \sM(S,-)$.  The theorem is proven once
  we show that this is a strong transformation.  By \autoref{odotRep},
  it suffices to show that this transformation has a right-adjoint
  transformation.  This also follows formally, by the special adjoint
  functor theorem: $f$ is coproduct-preserving, and right-exact, and
  hence has a right-adjoint; $f'$ likewise has a right adjoint, and
  these form an adjoint transformation.
\end{proof}

In derived contexts, the condition of \autoref{Ecor} is no longer
automatic, but we can still apply the result to obtain the following
explicit description.

\begin{prop}\label{StdDerEquivs}
  Let $k$ be a commutative ring, let $A$ be a DG $k$-algebra and let
  $f : \sD_k(A) \to \sD_k(\End_k(A))$ be an equivalence of triangulated
  categories.  Then $f$ is a standard derived equivalence if and only
  if we have the following:
  \begin{enumerate}
    \renewcommand{\theenumi}{\textit{\roman{enumi}}}
  \item The equivalence given by $f$ is an enriched equivalence.
  \item There is an enriched equivalence $f' : \sD_k(A,A) \to \sD_k(\End_k(A), A)$.
  \item The two equivalences, $f$ and $f'$ are compatible in the
    following sense: If $T', U' \elt \sD_k(A,A)$ and $T,U \elt
    \sD_k(A) = \sD_k(A,k)$, then there are natural maps
    \begin{center}
    \begin{tabular}{ll}
    $\Ext_A(T,U') \to \Ext_{\End_k(A)}(fT,f'U')$ & in $\sD_k(k,A)$\\
    $\Ext_A(T',U) \to \Ext_{\End_k(A)}(f'T',fU)$ & in $\sD_k(A,k)$
    \end{tabular}
    \end{center}
    which commute with the pairing induced by composition.  (That is,
    the squares in \autoref{Ermk} commute.)
  \end{enumerate}
\end{prop}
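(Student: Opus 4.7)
The plan is to deduce this proposition by transporting the Yoneda-lemma machinery of \autoref{yoneda} into the bicategorical setting $\sB = \sD_k$, with $B = \End_k(A)$ and $I = k$. In this language the statement becomes a direct translation of \autoref{MoII}, \autoref{Ycor}, \autoref{Ecor}, and \autoref{odotRep}, so the bulk of the work is bookkeeping to match notation.

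For the ``only if'' direction, suppose $f$ is a standard derived equivalence. By definition $f$ is, up to isomorphism, the component at $k$ of a strong transformation $F : \sD_k(A,-) \to \sD_k(B,-)$ arising (via \autoref{MoII}) as $\odot$-composition with an invertible 1-cell $B \to A$. Take $f' := F_A$, which is again an equivalence with quasi-inverse given by the corresponding component of the inverse transformation; this gives (ii) at the level of an equivalence. Applying \autoref{rtF} to the transformation $F$ produces the enrichment 2-cells of $f$ and $f'$, which is conditions (i) and (ii). The compatibility condition (iii) is then an instance of the general compatibility-with-composition diagrams in \autoref{Ermk}, evaluated on the various choices of source and target from $\{A, k\}$.

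For the ``if'' direction, assume (i)--(iii). The sub-bicategory $\zcb$ of $\sD_k$ generated by $A$ and $k$ has exactly the four hom categories $\sD_k(x,y)$ with $x,y \elt \{A,k\}$, and the four enrichment 2-cells $T \rt U \to FT \rt FU$ required by \autoref{Ecor} are exactly the data packaged in (i), (ii), and (iii) (the two ``pure'' cases on $\sD_k(A,k)$ and $\sD_k(A,A)$, and the two ``mixed'' cases in $\sD_k(k,A)$ and $\sD_k(A,k)$). By \autoref{Ecor}, the pair $(f, f')$ therefore assembles into a transformation $F : \zcb(A,-) \to \sD_k(B,-)$. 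Since $f$ and $f'$ are equivalences of categories, they admit right adjoints $g$ and $g'$; an adjoint of an enriched equivalence inherits the enrichment, so $(g, g')$ likewise satisfies the hypotheses of \autoref{Ecor} and thus forms a transformation $\sD_k(B,-) \to \zcb(A,-)$ adjoint to $F$. By \autoref{odotRep}, $F$ is therefore a strong transformation. Applying \autoref{Ycor} (the Yoneda equivalence on $\zcb$), the strong transformation $F$ extends essentially uniquely to a strong transformation $\widetilde F : \sD_k(A,-) \to \sD_k(B,-)$ whose $k$-component is $f$. Finally \autoref{MoII} identifies $\widetilde F$ with $\odot$-composition by an invertible 1-cell $B \to A$, so $f$ is a standard derived equivalence.

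The main obstacle is verifying that the transformation produced from (i)--(iii) is actually \emph{strong}, and not merely a transformation; this is exactly where the equivalence hypothesis on $f$ and $f'$ is used in an essential way, via the adjoint criterion of \autoref{odotRep}. Once strongness is secured, the extension from $\zcb$ to all of $\sD_k$ is formal Yoneda: a strong transformation out of a represented pseudofunctor is determined by its value on the representing object. The remaining verifications---matching the four enrichment pieces in \autoref{Ecor} against conditions (i)--(iii), and checking that $\widetilde F_k$ agrees with the original $f$ under the chosen isomorphism---are routine unwinding of definitions.
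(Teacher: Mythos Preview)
Your proposal is correct and follows exactly the route the paper intends: the proposition is stated there as a direct specialization of \autoref{Ecor} (together with \autoref{Ycor}, \autoref{odotRep}, and \autoref{MoII}) to the closed bicategory $\sD_k$ with $I = k$ and $B = \End_k(A)$, and the paper gives no further argument beyond the sentence ``we can still apply the result to obtain the following explicit description.'' Your write-up is in fact more explicit than the paper's, particularly in isolating the passage from \emph{transformation} to \emph{strong} transformation via \autoref{odotRep} using the equivalence hypotheses on $f$ and $f'$.
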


\section{Model Structure for DG Algebras}\label{Model}
We begin with some definitions and reminders from \cite{kriz1995oam},
and give a model structure for the category of DG-modules over a DG
algebra.  We make use of the homotopy extension and lifting property
(HELP) to streamline the model-theoretic arguments, and emphasize the
analogy with topology.  When the DG algebra is concentrated in degree
0 (a ring), this is the standard model structure for chain complexes
over the ring (\autoref{nothingnew}).  Let $k$ be a commutative ring
and $A$ a fixed DG $k$-algebra.  We let $S^n = A \otimes_k (k[n])$,
where $k[n]$ is a free DG $k$-module on a single generator in degree
$n$, so $S^n$ is a free $A$-module on a single generator in
degree $n$.  We let $D^n$ be a free $A$-module with one
generator in degree $n$ and one in degree $n-1$; the differential on
$D^n$ takes the generator in degree $n$ to that in degree $n-1$.
Finally, we let $I$ denote a free $A$-module which has one
generator, $\lng I \rng$, in degree 1, and two generators, $\lng 0
\rng$ and $\lng 1 \rng$ in degree 0; on generators, the differential
in $I$ is given by $\lng I \rng \mapsto \lng 0 \rng - \lng 1 \rng$.
We let $\otimes$ denote $\otimes_A$, and for any $A$-module $M$ we let
$i_0$ and $i_1$ denote the inclusions $M \to M \otimes I$
corresponding to $M \otimes \lng 0 \rng$ and $M \otimes \lng 1 \rng$,
respectively.

\begin{defn}[Relative cell module]\label{cellmod}
  A map of $A$-modules $C_0 \to C$ is called a relative cell
  $A$-module if $C$ is the colimit of a sequence of maps $C_r \to
  C_{r+1}$, with each map obtained as a pushout
  \[\cmxymat{
    {\bigoplus_{q_i}} S^{q_i} \ar[r] \ar[d] & C_{r} \ar[d] \\
    {\bigoplus_{q_i}} D^{q_i + 1} \ar[r] & C_{r+1} \\
  }\]
  The maps $S^{q_i} \to C_{r}$ above are called the \emph{attaching
    maps} for $C_{r}$.  If $0 \to C$ is a relative cell $A$-module,
  $C$ is called a cell $A$-module.  If there are only finitely many
  cells, then $C$ is called a finite cell $A$-module.
\end{defn}

This is generalized by the following definition.

\begin{defn}[{\cite[4.5.1]{may2006pht}}]\label{MSdef}
Let $\sI$ be a set of maps in a category $\sC$ with coproducts $\oplus$.
\begin{enumerate}
\renewcommand{\theenumi}{\textit{\alph{enumi}}}
\item A \emph{relative $\sI$-cell module} is a map $C_{0} \to C$, with
  $C$ obtained as a colimit of maps, $C_{r} \to C_{r+1}$, formed by
  pushouts
  \[\cmxymat{
    {\bigoplus_{q \elt \sI}} X_q \ar[r] \ar[d] & C_{r} \ar[d] \\
    {\bigoplus_{q \elt \sI}} Y_q \ar[r] & C_{r + 1} \\
  }\]
  where each $X _q \to Y_q$ is a map in $\sI$.

\item The set $\sI$ is \emph{compact} if, for every map $X \to Y$ in
  $\sI$, the source object, $X$, is small with respect to countable
  colimits.  That is, for every relative $\sI$-cell module $C_{0} \to
  C$ as above, the natural map below is an isomorphism.
  \[
  \colim \Hom_A (X, C_{r}) \fto{\iso} \Hom_A (X, \colim C_{r})
  \]

\item An \emph{$\sI$-cofibration} is a map which satisfies the LLP
  with respect to any map satisfying the RLP with respect to all maps
  in $\sI$.
\end{enumerate}
\end{defn}

\begin{defn}[Cell submodule]
  If $M = \colim M_{r}$ and $L = \colim L_{r}$ are cell
  $A$-modules for which each $L_{r}$ is a submodule of $M_{r}$
  and, for each attaching map $S^q \to L_{r}$, the composite $S^q
  \to L_{r} \subset M_{r}$ is one of the attaching maps for
  $M_{r}$, then $L$ is called a \emph{cell submodule} of $M$.
\end{defn}

\begin{thm}[HELP {\cite[III.2.2]{kriz1995oam}}]\label{HELP}
  Let $L$ be a cell submodule of a cell $A$-module, $M$, and let $e :
  N \to P$ be a quasi-isomorphism of $A$-modules.  Then, given maps
  which make the solid arrow diagram below commute, there are dashed
  lifts which commute with the rest of the diagram.
\[\cmxymat @C=8pt @R=10pt{
  L \ar[rr]^{i_0} \ar[dd] & & L \otimes I \ar[dd]|\hole \ar[dl]_{h} & & L \ar[ll]_{i_1} \ar[dl]_{g} \ar[dd] \\
   & P & & N \ar[ll]_(.42)e & \\
  M \ar[rr]_{i_0} \ar[ur]^{f} & & M \otimes I \ar@{-->}[lu] & & M \ar[ll]^{i_1} \ar@{-->}[lu] \\
}\]
\end{thm}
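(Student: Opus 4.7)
The plan is to proceed by induction on the cellular filtration of $M$ relative to $L$. Since $L$ is a cell submodule of $M$, we may reorganize the filtration so that $M = \colim M_r$ with $M_0 = L$ and each $M_{r+1}$ obtained from $M_r$ by a pushout along a coproduct of attaching maps $S^q \to M_r$ (the cells of $M$ not already in $L$). The base of the induction is the hypothesis, which provides $g$ and $h$ as the required lifts on $L = M_0$. Passage to the colimit at the end is automatic because the generators $S^q$ and $D^{q+1}$ are compact in the sense of \autoref{MSdef}, so the lifts assembled stagewise define maps out of $M$ and $M \otimes I$.

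For the inductive step, the universal property of the pushout defining $M_{r+1}$ reduces the problem to the atomic case of extending across a single attached cell $D^{q+1}$ along $\phi : S^q \to M_r$. Let $s$ denote the generator of $S^q$ in degree $q$ and $c$ the generator of $D^{q+1}$ in degree $q+1$, so $dc = s$ and $z_0 := \phi(s)$ is a cycle in $M_r$. Given $\tilde{g}_r$ and $\tilde{h}_r$, what must be produced at chain level are an element $y \in N_{q+1}$ with $dy = \tilde{g}_r(z_0)$, defining $\tilde{g}_{r+1}(c) := y$, and an element $p \in P_{q+2}$ whose differential witnesses the homotopy $\tilde{h}_{r+1}$ on the new cell.

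The key calculation exploits that $e$ is a quasi-isomorphism. Set $z := \tilde{g}_r(z_0) \in Z_q(N)$ and $\eta := \tilde{h}_r(z_0 \otimes \lng I \rng) \in P_{q+1}$. The homotopy condition gives $d\eta = f(z_0) - e(z)$, while $f(z_0) = d(f(c))$; together these identify $e(z) = d(f(c) - \eta)$ as a boundary in $P$. Since $e_*$ is an isomorphism on homology, $z$ is a boundary in $N$, producing $y$. For $p$, the element $\xi := f(c) - e(y) - \eta \in P_{q+1}$ is a cycle (by $d^2 = 0$ and the relations above, up to signs from the differential on $- \otimes I$). Its class $[\xi] \in H_{q+1}(P)$ depends on the choice of $y$ only modulo $e_*(H_{q+1}(N))$; since $e_*$ is surjective, we may add to $y$ a suitable cycle $y' \in Z_{q+1}(N)$ (which preserves $dy = z$) to force $[\xi] = 0$. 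Then $\xi$ is a boundary, yielding $p$ and completing the atomic step.

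The main obstacle is bookkeeping rather than conceptual: one must carefully track the signs in the differential on $M \otimes I$ and in the compatibility between $\tilde{h}$, the evaluations at $i_0, i_1$, and the extension to a new cell. The conceptual heart is the standard mechanism behind HELP—that a quasi-isomorphism both lifts cycles that become boundaries and, via surjectivity on homology, permits us to correct the homotopy-level discrepancies by adjusting our lift by a cycle. This mirrors the topological argument cited from \cite{kriz1995oam}.
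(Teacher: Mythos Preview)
Your approach is essentially the paper's own: reduce via the relative cellular filtration to the atomic case $S^q \hookrightarrow D^{q+1}$, where the chain-level calculation you carry out is exactly the content of the paper's \autoref{miniHELP} (your $y$, $y'$, $\xi$, $p$ correspond to the paper's $\wt{w}$, $\widehat{w}$, $e\wt{w} - w' + \tha$, $\eta$, up to the sign conventions both you and the paper leave implicit). One small correction: passage to the colimit does not require compactness---your stagewise lifts $\tilde g_r$, $\tilde h_r$ are compatible by construction, so they assemble into maps on $M = \colim M_r$ and $M \otimes I = \colim (M_r \otimes I)$ purely by the universal property of the colimit; compactness is relevant only for factoring maps \emph{into} a colimit through a finite stage, which is not what is needed here.
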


The following lemma clarifies the relationship between HELP and
quasi-isomorphisms.  It is obvious from \cite[III.2.1]{kriz1995oam},
although they state and prove only one direction.  \autoref{HELP} is
proven by using the relative cell structure $L \to M$ to reduce to
this lemma.

\begin{note}
  For one who compares this lemma with \cite{kriz1995oam}, it may be helpful to
  point out that the grading is cohomological there, so they use $s$
  and $s-1$ where we use $n$ and $n+1$.
\end{note}

\begin{lem}\label{miniHELP}
  For any integer $n$, a map $e:N \to P$ of DG-modules over $A$
  satisfies HELP with respect to the inclusion $S^n \to D^{n+1}$ if
  and only if $e_* : H_*(N) \to H_*(P)$ is a monomorphism in degree
  $n$ and an epimorphism in degree $n+1$.
\end{lem}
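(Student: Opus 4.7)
The plan is to translate HELP for the pair $(S^n, D^{n+1})$ into explicit data about chains, and then read off the two homology conditions from the two lifts that HELP produces. First I will unwind the diagram. A map $g : S^n \to N$ is the choice of a cycle $z \elt Z_n(N)$. A map $f : D^{n+1} \to P$ is the free choice of an element $w \elt P_{n+1}$, with the image of the bottom generator forced to be $dw$. A homotopy $h : S^n \ten I \to P$ between $f|_{S^n}$ and $e \com g$ is then the choice of an element $u \elt P_{n+1}$ satisfying $du = \pm(dw - e(z))$; equivalently, setting $\be := w \pm u$, the data is a pair $(z,\be)$ with $z \elt Z_n(N)$ and $\be \elt P_{n+1}$ satisfying $d\be = e(z)$, together with an arbitrary element $u \elt P_{n+1}$ used to record the splitting.

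Next I will identify what the two lifts provide. The lift $\wt{f} : D^{n+1} \to N$ extending $g$ is exactly the choice of $\wt{w} \elt N_{n+1}$ with $d\wt{w} = z$, so it exists for every such $z$ if and only if $e_*$ is injective on $H_n$ (since the existence of $\be$ encodes $e_*[z]=0$). The lift $\wt{h} : D^{n+1} \ten I \to P$ is forced on the boundary of the top cell, and its only remaining freedom is the image $v \elt P_{n+2}$ of $y \ten \lng I\rng$, subject to $dv = \pm(\be - e(\wt{w}))$. The element $\be - e(\wt{w})$ is automatically a cycle in $Z_{n+1}(P)$, so the existence of $v$ is the vanishing of $[\be - e(\wt{w})]$ in $H_{n+1}(P)$, and replacing $\wt{w}$ by $\wt{w} + c$ for $c \elt Z_{n+1}(N)$ alters this class by $-e_*[c]$.

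With this dictionary in hand, both directions reduce to manipulations in homology. For necessity: injectivity of $e_*$ on $H_n$ comes directly from lifting the cycle $z$ whenever $e_*[z]=0$; surjectivity on $H_{n+1}$ comes from the observation that for a fixed $z$ and any $\wt{w}$ with $d\wt{w} = z$, the element $\be$ can be chosen to make $\be - e(\wt{w})$ any prescribed cycle in $Z_{n+1}(P)$, and HELP forces each such cycle to differ from an element of $\mathrm{image}(e_*)$ by a boundary. For sufficiency: given the hypotheses, injectivity on $H_n$ produces an initial lift $\wt{w}$, and then surjectivity on $H_{n+1}$ gives a cycle $c \elt Z_{n+1}(N)$ with $e_*[c] = [\be - e(\wt{w})]$, so that replacing $\wt{w}$ by $\wt{w} + c$ makes the obstruction a boundary and yields $v$.

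The main obstacle is purely bookkeeping: keeping track of the Koszul signs introduced by tensoring with $I$ and the identifications $i_0, i_1$, so as to confirm the precise equations $d\wt{w} = z$ and $dv = \pm(\be - e(\wt{w}))$. Once the signs are fixed, the logical structure is a direct exchange between the lifting data and the kernel/cokernel of $e_*$ in the two relevant degrees, and both implications follow from the analysis above.
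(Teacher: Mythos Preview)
Your proposal is correct and takes essentially the same approach as the paper: both unwind the HELP diagram for $S^n \hookrightarrow D^{n+1}$ into explicit chain-level equations, deduce necessity by specializing the input data, and prove sufficiency by first lifting via injectivity on $H_n$ and then correcting the lift by a cycle in $N_{n+1}$ supplied by surjectivity on $H_{n+1}$. The only cosmetic difference is that the paper makes the surjectivity specialization explicit by taking $z_n = 0$ and $w'_{n+1} = 0$ (so that the homotopy datum is itself an arbitrary cycle in $P_{n+1}$), whereas you phrase it in terms of a fixed $z$ and a preselected $\wt{w}$; once you note that HELP may return a different $\wt{w}'$ and that $\wt{w}' - \wt{w}$ is then a cycle, the two arguments coincide.
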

\begin{proof}
  Having HELP with respect to $S^n \to D^{n+1}$ means having the
  dotted lifts in any solid-arrow diagram of the type shown below.
  \[
  \cmxymat @C=8pt @R=10pt{
    S^n \ar[rr]^{i_0} \ar[dd] & & S^n \otimes I \ar[dd]|\hole \ar[dl]_-{\tha} & & S^n \ar[ll]_{i_1} \ar[dl]_{z} \ar[dd] \\
    & P & & N \ar[ll]_(.42)e & \\
    D^{n+1} \ar[rr]_{i_0} \ar[ur]^{w'} & & D^{n+1} \otimes I \ar@{-->}[lu]^-{\eta} & & D^{n+1} \ar[ll]^{i_1} \ar@{-->}[lu]^-{w} \\
  }
  \]
  In words (using subscripts to denote degrees of elements, and
  including factors of $(-1)^n$ implicitly where appropriate in our
  correspondence between letters above and letters below), this says
  that given any cycle, $z_n$, in $N$ whose image in $P$ is homologous
  to a boundary, $z_n'$:
  \[
  z_n' = dw_{n+1}' \text{ and } ez_n - z_n' = d\tha_{n+1},
  \]
  then $z_n$ is a boundary in $N$ of some $w_{n+1}$ and, moreover, the
  image of that bounding element in $P$ is homologous to the
  difference between the bounding element for $z_n'$ and the bounding
  element for $ez_n - z_n'$:
  \[
  z_n = dw_{n+1} \text{ and } ew_{n+1} - w_{n+1}' + \tha_{n+1} =
  d\eta_{n+2}.
  \]

  If $e$ has this lifting property, then taking $\tha_{n+1} = 0$ shows
  that $e_*$ is a monomorphism in degree $n$, and taking $z_n,z_n' =
  0$ (so $\tha_{n+1}$ is a cycle in $P$) shows that $e_*$ is an
  epimorphism in degree $n+1$.  For the converse, $e_n$ being a
  monomorphism gives the existence of an element,
  $\wt{w}_{n+1}$ whose boundary is $z_n$, since $ez_n$ is
  homologous to a boundary in $P$.  The existence of $\eta_{n+2}$
  follows because $e_{n+1}$ is an epimorphism and $e\wt{w} -
  w_{n+1}' + \tha_{n+1}$ is a cycle, so there is some cycle
  $\widehat{w}$ in $N$ for which $(e\wt{w} - w_{n+1}' +
  \tha_{n+1}) - e\widehat{w} = d\eta_{n+2}$.  The element $w_{n+1}$ is
  taken to be the difference $\wt{w} - \widehat{w}$.
\end{proof}

As HELP indicates, we use the inclusions $S^q \to D^{q+1}$ and $D^q
\to D^q \otimes I$ to generate the model structure for DG modules over
$A$.  This is formalized by any of several standard results for model
structures, and we quote one such result here.

\begin{thm}[{\cite[4.5.6]{may2006pht}}]
  Let $\sC$ be a bicomplete category with a subcategory of weak
  equivalences (that is, a subcategory containing all isomorphisms in
  $\sC$ and closed under retracts and the two out of three property).
  Let $\sI$ and $\sJ$ be compact sets of maps in $\sC$. If the
  following two conditions hold, then $\sC$ is a compactly generated
  model category, with generating cofibrations $\sI$ and generating
  acyclic fibrations $\sJ$:
\begin{enumerate}
\renewcommand{\theenumi}{\roman{enumi}}
\item (Acyclicity) Every relative $\sJ$-cell complex is a weak
  equivalence.
\item (Compatibility) A map has the RLP with respect to $\sI$ if and
  only if it is a weak equivalence and has the RLP with respect to
  $\sJ$.
\end{enumerate}
\end{thm}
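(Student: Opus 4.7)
The plan is to verify the four model-category axioms after declaring the fibrations to be the maps with the RLP with respect to $\sJ$, and the cofibrations to be the $\sI$-cofibrations (maps with the LLP against every map that has the RLP against $\sI$); the weak equivalences are given. Bicompleteness is hypothesized, and closure of cofibrations and fibrations under retracts is automatic from their lifting-property definitions, as is the 2-of-3 property for weak equivalences. The main engine is Quillen's small object argument, which I would apply separately to $\sI$ and to $\sJ$. The compactness hypothesis is precisely what makes this transfinite construction terminate after a countable tower of pushouts: any map from the source of a generator into a countable colimit factors through a finite stage, so the required lifts can be arranged inductively. The output is two functorial factorizations of an arbitrary map $f$: one as a relative $\sI$-cell complex followed by a map with the RLP against $\sI$, and one as a relative $\sJ$-cell complex followed by a fibration.

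To turn these into the model-category factorizations, I would use compatibility (ii) to identify the maps with the RLP against $\sI$ as exactly the acyclic fibrations, which handles the first factorization. For the second, I first check that every $j \elt \sJ$ is an $\sI$-cofibration: if $p$ has the RLP against $\sI$ then by (ii) it is an acyclic fibration, so $j$ lifts against $p$ by definition. Since $\sI$-cofibrations are closed under pushouts and transfinite composition, every relative $\sJ$-cell complex is an $\sI$-cofibration, and by acyclicity (i) it is also a weak equivalence. Hence the second factorization is (acyclic cofibration)$\circ$(fibration).

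The remaining task is the lifting axiom: every map $f$ that is both a cofibration and a weak equivalence should have the LLP against every fibration. Here I would invoke the retract argument: apply the second factorization to write $f = p \circ i$ with $i$ a relative $\sJ$-cell complex and $p$ a fibration. Two-of-three forces $p$ to be a weak equivalence, hence an acyclic fibration by (ii); since $f$ is a cofibration, it lifts against $p$, and the resulting diagonal exhibits $f$ as a retract of $i$. Because $i$ has the LLP against every fibration (each generator in $\sJ$ does by definition of fibration, and this property is preserved under pushouts, transfinite composition, and retracts), so does $f$. The main obstacle will be the careful bookkeeping in the small object argument, especially verifying that compactness with respect to \emph{countable} colimits is sufficient to produce all needed lifts at limit stages; once the two functorial factorizations are in hand, the remaining steps are formal consequences of (i) and (ii) together with the retract argument.
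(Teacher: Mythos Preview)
The paper does not prove this theorem at all; it is quoted as an external result from \cite[4.5.6]{may2006pht} and then applied in the subsequent subsection. Your sketch is the standard argument underlying that cited result, and it is essentially correct: declare the classes via lifting properties, run the small object argument for $\sI$ and for $\sJ$ using compactness, invoke (ii) to identify $\sI$-injectives with acyclic fibrations, invoke (i) together with the inclusion $\sJ\text{-cell} \subset \sI\text{-cof}$ to see that relative $\sJ$-cell complexes are acyclic cofibrations, and finish with the retract argument.

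Two small points worth tightening. First, you only explicitly treat one half of the lifting axiom (acyclic cofibration against fibration); the other half (cofibration against acyclic fibration) is immediate from your definitions and (ii), but you should say so. Second, the theorem as stated in the paper also claims the structural description of cofibrations and acyclic cofibrations as \emph{retracts of relative $\sI$-cell (resp.\ $\sJ$-cell) complexes}; you define cofibrations as $\sI$-cofibrations, so you should note that a second application of the retract argument (factor an $\sI$-cofibration via the $\sI$-small-object factorization and lift against the $\sI$-injective part) gives the retract characterization, and similarly for $\sJ$. With those additions your outline matches the standard proof that the paper is citing.
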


\begin{note}
The term \emph{compactly generated} is a specialization of the notion
of cofibrantly generated for the case that $\sI$ and $\sJ$ are compact
sets of maps (recall \autoref{MSdef}).  It means that the fibrations
are characterized by the RLP with respect to $\sJ$, the acyclic fibrations are
characterized by the RLP with respect to $\sI$, the cofibrations are the
retracts of relative $\sI$-cell complexes, and the acyclic
cofibrations are the retracts of relative $\sJ$-cell complexes.

The main advantages of compact generation over cofibrant generation
are that it does not require one to use the full version of the
small-object argument, but only a small-object argument over countable
colimits, and that it is sufficiently general for many topological and
algebraic applications, including the one which concerns us here.
\end{note}

\sbs{ Application} In our application, $\sC$ will be the category of
(DG) $A$-modules, and the weak equivalences will be the
quasi-isomorphisms.  The set $\sI = \{ S^{n-1} \to D^{n} \mid n \elt
\bZ \}$, and the set $\sJ = \{ D^n \fto{i_0} D^n \otimes I \mid n \elt
\bZ \}$.  By definition, the relative $\sI$-cell modules are the
relative cell $A$-modules, and since $D^n$ and $D^n \otimes I$ are
cell $A$-modules, any relative $\sJ$-cell module is also a cell
$A$-module.  Since $S^{n-1}$ and $D^n$ are finite cell $A$-modules,
the next lemma shows that $\sI$ and $\sJ$ are compact.

\begin{lem}[compactness]\label{cptness}
  If $Z_{0} \to Z_{1} \to Z_{2} \to \cdots \to Z$ is a relative cell
  complex, and if $C$ is a finite cell $A$-module, then the natural
  map below is an isomorphism.
\[
\colim \Hom_A (C, Z_{r}) \fto{\iso} \Hom_A(C, \colim Z_{r})
\]
\end{lem}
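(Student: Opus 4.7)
The plan is to proceed by induction on the number of cells in $C$, relying on two standard facts: the functor $\Hom_A(-,X)$ converts pushouts of $A$-modules into pullbacks of abelian groups, and filtered colimits of abelian groups commute with finite limits (in particular with such pullbacks).

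First I would treat the base cases $C = S^n$ and $C = D^n$. The key observation is that each transition map $Z_r \to Z_{r+1}$, being a pushout along $\bigoplus S^{q_i} \to \bigoplus D^{q_i + 1}$, is a degreewise split injection of DG-modules, so $\colim_r Z_r$ is computed degreewise as a filtered colimit of abelian groups. A map $S^n \to \colim_r Z_r$ is determined by a single element of $(\colim_r Z_r)_n = \colim_r (Z_r)_n$, and since the colimit on the right is filtered, that element lifts to some $(Z_r)_n$ and two lifts agree after passing to a later stage. This handles both surjectivity and injectivity of the natural map for $S^n$; the case $D^n$ is analogous, involving two generators and the requirement that the lifts be chosen compatibly with the differential in some $Z_r$, which can always be arranged at a sufficiently late stage.

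For the inductive step, I would write $C$ as a pushout
\[
\cmxymat{
S^{q-1} \ar[r] \ar[d] & C' \ar[d] \\
D^q \ar[r] & C
}
\]
where $C'$ is a finite cell module with one fewer cell. Applying $\Hom_A(-, Z_r)$ gives a pullback of abelian groups, and taking $\colim_r$ through the pullback (using the commutation of filtered colimits with finite limits) together with the inductive hypothesis for $C'$ and the base cases for $S^{q-1}$ and $D^q$ identifies $\colim_r \Hom_A(C, Z_r)$ with the corresponding pullback formed from $\colim_r Z_r$, which is $\Hom_A(C, \colim_r Z_r)$. The only delicate point is the degreewise-injectivity of the transition maps needed to ground the base case; once that is established, the remainder is formal.
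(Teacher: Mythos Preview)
Your proposal is correct and follows essentially the same route as the paper: establish the result for the building blocks $S^n$ and $D^n$, then induct on the number of cells using that $\Hom_A(-,X)$ sends pushouts to pullbacks and that filtered colimits commute with finite limits. One small remark: your emphasis on the degreewise split-injectivity of the transition maps is not actually needed, since colimits of DG $A$-modules are computed degreewise and filtered colimits in abelian groups are exact (so cycles and equalities already lift to a finite stage regardless of injectivity); the paper accordingly does not invoke it.
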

\begin{proof}
  Assume first that $C$ is a bounded complex of finitely-generated
  free $A$-modules, with generators $x_1, \ldots, x_n$.  Then an
  A-module map $f: C \to \colim Z_{r}$ is uniquely determined by the
  elements $f(x_i) \elt \colim Z_{r}$.  Since $n$ is finite, there is
  some $s$ such that $f(x_i) \elt im( Z_{s} \to \colim_r Z_{r})$ for
  all $i$.  Hence the lemma holds when $C$ is free; in particular, the
  lemma holds when $C = S^n$ or $C = D^n$.  Now if $M$ is a cell
  complex for which the lemma holds, and $S^n \to M$ is any map of
  $A$-modules, then the pushout of this map along $S^n \to D^{n+1}$ is
  an $A$-module for which the lemma holds.  Since the lemma holds for
  the $A$-module 0, it holds for every finite cell $A$-module.
\end{proof}

\sbs{ Acyclicity}\label{defRet} We note first that the inclusion $i_0
: A \to I$ of free $A$-modules given by $1 \mapsto \lng 0
\rng$ has a deformation retraction, $r$, given by $r \lng I \rng = 0$
and $r(a\lng 0 \rng + b \lng 1 \rng) = a+b$. An explicit homotopy $h:
i_o r \hty \text{id}$ is easily constructed.  For any $A$-module, $M$,
the inclusion $i_0 : M \to M \otimes I$ has a deformation retraction
induced by $r$, and therefore also any map $M \to N$ given by pushout
along $D^n \to D^n \otimes I$ has a deformation retraction.

Since $S^n$ and $S^n \otimes I$ are finite cell $A$-modules, we apply
\autoref{cptness} to see that any relative $\sJ$-cell module is a weak
equivalence.

\sbs{ Compatibility} Let $p : X \to Y$ be a map of $A$-modules.
Assume first that $p$ has the RLP with respect to $\sI$.  Then for
maps
\[
\cmxymat{
  {\bigoplus}S^{q_{i} - 1} \ar[r] \ar[d] & C_{r} \ar[r] \ar[d] & X \ar[d]^-{p} \\
  {\bigoplus}D^{q_i} \ar[r] & C_{r+1} \ar[r] & Y }
\] 
where $C_{r+1}$ is a pushout, we have a lift $\oplus D^{q_i} \to X$
and hence a lift $C_{r+1} \to X$.  Therefore $p$ lifts with respect to
any relative ($\sI$-)cell module, and in particular has the RLP with
respect to $\sJ$.  Moreover, $p$ has the RLP with respect to all maps
$0 \to S^n$ and $S^n \otimes I \to S^n$, and hence $p$ is a weak
equivalence.

Now suppose only that $p$ is a weak equivalence and has the RLP with
respect to $\sJ$.  Being a weak equivalence, $p$ satisfies the
homotopy extension and lifting property (\autoref{HELP}).  To see that
this implies the result, note first that there is an isomorphism of
free $A$-modules ${I} \iso {D^1} \oplus {S^0}$ given by changing
basis in degree 0, and hence a projection ${I} \to {D^1}$ which
equalizes the two inclusions $i_0$ and $i_1 : {S^0} \to {I}$.
Moreover, the composite of either inclusion with this projection is
the standard inclusion ${S^0} \to {D^1}$.  We tensor with ${S^n}$ and
use the isomorphism ${D^{n+1}} \iso {S^n} \otimes {D^1}$ to define a
map ${S^n} \otimes {I} \to {S^n} \otimes {D^1} \iso {D^{n+1}}$
equalizing the inclusions $i_0$ and $i_1: {S^n} \to {S^n} \otimes {I}$
and such that either composite ${S^n} \to {S^n} \otimes {I} \to
{D^{n+1}}$ is the standard inclusion.  In other words, the diagram
below commutes.
\[
\cmxymat @C=8pt @R=10pt{
  S^n \ar[rr]^{i_0} \ar[dd] & & S^n \otimes I \ar[dl] & & S^n \ar[ll]_{i_1} \ar@{=}[dl] \\
  & D^{n+1} & & S^n \ar[ll] & \\
  D^{n+1} \ar@{=}[ur] \\
}
\]

Now given the following commuting square of DG modules over $A$,
\[\cmxymat{
  S^n \ar[r] \ar[d] & X \ar[d]^-{p} \\
  D^{n+1} \ar[r] & Y
}\]
we produce a lift via the commuting diagram below, where the map $S^n
\otimes I \to Y$ is the composite $S^n \otimes I \to D^{n+1} \to Y$.
\[
\cmxymat @C=8pt @R=10pt{
  S^n \ar[rr]^{i_0} \ar[dd] & & S^n \otimes I \ar[dd]|\hole \ar[dl] & & S^n \ar[ll]_{i_1} \ar[dl] \ar[dd] \\
  & Y & & X \ar[ll]_(.42){p} & \\
  D^{n+1} \ar[rr]_{i_0} \ar[ur] & & D^{n+1} \otimes I \ar@{-->}[lu] \ar@{.>}[ru]^{\ell} & & D^{n+1} \ar[ll]^{i_1} \ar@{-->}[lu] \\
}
\] 
The dashed lifts follow from HELP (\autoref{HELP}), and the dotted
lift of these, $\ell$, exists because $p$ has the RLP with respect to
$\sJ$.  The composite $\ell i_0$ is the desired lift for the square
above.

\sbs{ Further Structure}
Here we document some facts about the model structure described above.

\begin{rmk}\label{nothingnew}
  This model structure is the same as the standard model structure for
  chain complexes over a ring.  Hovey's description
  \cite[2.3.3]{hovey1999mc} of the standard model structure for
  DG-modules over $A$ when $A$ is a ring (i.e. chain complexes over
  the ring $A$) has the same weak equivalences and generating
  cofibrations, $\sI$, as above; the generating acyclic cofibrations
  are $\sJ' = \{ 0 \to D^n \}$.  It is clear that the cofibrations of
  these two model structures are the same, and the squares below show
  that the fibrations of these two structures are also the same: RLP
  with respect to $\sJ$ is equivalent to RLP with respect to
  $\sJ'$.\qedhere
  \[
  \cmxymat{
    0 \ar[r] \ar[d] & D^n \ar[r] \ar[d] & 0 \ar[d]\\
    D^{n+1} \ar[r] & D^n \otimes I \ar[r] & D^{n+1} }
  \]
\end{rmk}

\begin{prop}
  All $A$-modules are fibrant.
\end{prop}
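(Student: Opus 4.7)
The plan is to verify that every $A$-module $M$ has the right lifting property with respect to the generating acyclic cofibrations $\sJ = \{D^n \fto{i_0} D^n \otimes I \mid n \elt \bZ\}$, which by compact generation is precisely what is needed to conclude that $M \to 0$ is a fibration.

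The key observation, already recorded in \autoref{defRet}, is that for each $n$ the map $i_0 : D^n \to D^n \otimes I$ admits a deformation retraction $r : D^n \otimes I \to D^n$ satisfying $r \com i_0 = \id_{D^n}$; this is obtained by tensoring $D^n$ with the explicit retraction of $i_0 : A \to I$ constructed from $\lng I \rng \mapsto 0$, $\lng 0 \rng \mapsto 1$, $\lng 1 \rng \mapsto 1$. Given any solid-arrow square
\[\cmxymat{
D^n \ar[r]^-{f} \ar[d]_-{i_0} & M \ar[d]\\
D^n \otimes I \ar[r] & 0
}\]
the composite $\ell := f \com r : D^n \otimes I \to M$ is a lift: indeed $\ell \com i_0 = f \com r \com i_0 = f$, and commutativity with the map to $0$ is automatic.

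Since this works for every $n$ and every $f$, the map $M \to 0$ has the RLP with respect to $\sJ$, so $M$ is fibrant. There is no real obstacle here; the entire content is the presence of the retraction $r$, which is precisely the feature that makes $\sJ$ play the role of generating acyclic cofibrations in this (rather than cofibrantly-generated-but-not-compactly-generated) setup.
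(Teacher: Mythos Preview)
Your proof is correct and is essentially the same as the paper's: both use that $i_0 : D^n \to D^n \otimes I$ admits a retraction $r$ (the paper phrases this as ``$i_0$ has a section''), so that any $f : D^n \to M$ lifts to $f \com r$ and the map $M \to 0$ has the RLP with respect to $\sJ$. You simply spell out the lift explicitly, which the paper leaves implicit.
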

\begin{proof}
  The inclusion $D^n \to D^n \otimes I$ has a section, hence the map
  $M \to 0$ has the RLP with respect to all generating acyclic
  cofibrations for any $M$.
\end{proof}

\begin{prop}[{\cite[III.4.1]{kriz1995oam}}]\label{cofib-monAx}
  If $X \to Y$ is a weak equivalence of \emph{left} $A$-modules and
  $M$ is a cofibrant (right) $A$-module, then
  \[
  M \otimes_A X \to M \otimes_A Y
  \]
  is a weak equivalence.
\end{prop}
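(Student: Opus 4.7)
The plan is to reduce to the case where $M$ is a cell $A$-module via the retract argument, then induct along the cellular filtration, with the colimit step handled by commuting homology past filtered colimits.

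First I would observe that any cofibrant $A$-module is a retract of a cell $A$-module (a retract of a relative $\sI$-cell complex $0 \to M'$, by compact generation). Since $-\otimes_A X$ and $-\otimes_A Y$ preserve retracts and the class of weak equivalences is closed under retracts, it suffices to prove the statement when $M$ is itself a cell $A$-module. Write $M = \colim M_r$ with $M_0 = 0$ and $M_{r+1}$ obtained from $M_r$ by a pushout along $\bigoplus S^{q_i} \to \bigoplus D^{q_i+1}$.

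I would then induct on $r$ to show that $M_r \otimes_A X \to M_r \otimes_A Y$ is a weak equivalence. The base case is trivial. For the inductive step, note that the map $\bigoplus S^{q_i} \to \bigoplus D^{q_i+1}$ is a split monomorphism of underlying graded $A$-modules (forgetting differentials), with graded cokernel $\bigoplus S^{q_i+1}$. Hence the pushout fits in a short exact sequence
\[
0 \to M_r \to M_{r+1} \to \bigoplus S^{q_i+1} \to 0
\]
of DG $A$-modules that is split as a sequence of graded $A$-modules, so it remains short exact after applying $-\otimes_A X$ (and similarly for $Y$). The associated long exact sequences in homology, together with the inductive hypothesis and the fact that $(\bigoplus S^{q_i+1}) \otimes_A X = \bigoplus X[q_i+1]$ (direct sum of shifts of $X$, for which $X \to Y$ is clearly a weak equivalence), let us apply the five lemma to conclude that $M_{r+1} \otimes_A X \to M_{r+1} \otimes_A Y$ is a weak equivalence.

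Finally, since $-\otimes_A X$ is a left adjoint it commutes with the filtered colimit $M = \colim M_r$, and homology commutes with filtered colimits of DG-modules, so $H_*(M \otimes_A X) = \colim H_*(M_r \otimes_A X)$ and likewise for $Y$. The inductive weak equivalences assemble into a weak equivalence $M \otimes_A X \to M \otimes_A Y$. The main delicate point is verifying that the generating-cofibration pushouts produce a short exact sequence that survives tensoring with an arbitrary left $A$-module; once the graded-splitting observation is in hand, everything else is a five-lemma argument together with the standard fact that homology of DG-modules commutes with filtered colimits.
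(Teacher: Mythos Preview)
Your argument is correct and is the standard cellular-filtration proof: reduce to cell modules by the retract argument, use the degreewise splitting of $S^q \hookrightarrow D^{q+1}$ to get short exact sequences that survive $-\otimes_A X$, apply the five lemma at each stage, and pass to the colimit. The paper itself does not give a proof of this proposition; it simply records the statement and cites \cite[III.4.1]{kriz1995oam} for the argument, so there is nothing in the paper to compare against beyond noting that your approach is essentially what one finds in that reference.
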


\section{Base Change for DG Algebras}\label{base-change} 
In this section, we describe general results regarding change of base
DG $k$-algebra.  Suppose that $A$ and $B$ are DG $k$-algebras for a
commutative ring, $k$, and suppose $f : A \to B$ is a map of DG
$k$-algebras.  There are two natural pull-backs of $B$ to the category
of DG $A$-modules: let $_A B_B \elt DG_k(B,A)$ denote $B$ with the
action of $A$ on the left via $f$, and let $_B B_A \elt DG_k(A,B)$
denote $B$ with the action of $A$ on the right via $f$.  Then the
$A$-$A$ bimodule obtained from $B$ with $A$ acting on both sides by
$f$ is given as $({_A B_B}) {\otimes_B} ({_B B_A}) = ({_A B_B})
{\odot} ({_B B_A}) \elt DG_k(A,A)$.  The map $f$ can be regarded as a
2-cell
\[
A \fto{f} {_A B_A} = ({_A B_B}) {\odot} ({_B B_A}).
\]
The multiplication for $B$ gives a 2-cell in $DG_k(B,B)$
\[
({_B B_A}) \odot ({_A B_B}) = ({_B B_A}) \otimes_A ({_A B_B}) \to B
\]
and the duality relations hold, making $({_A B_B} , {_B B_A})$ a dual
pair.  Hence we have an adjoint pair of strong transformations
\[
\text{extension of scalars: }
f_! = - \odot {_A B_B} : DG_k(A,-) \to DG_k(B,-) 
\]
and
\[
\text{restriction of scalars: }
f^* = - \odot {_B B_A} \iso {_A B_B} \rt - : DG_k(B,-) \to DG_k(A,-)
\]
The transformation $f^*$ is right adjoint to $f_!$, but since $f^*$ is
itself a strong transformation, it also has its own right adjoint,
\[
f_* = {_B B_A} \rt - : DG_k(A,-) \to DG_k(B,-).
\]

\sbs{ Local model structure} Each 1-cell category $DG_k(A,B)$ has a
model structure, described by applying the theory of \autoref{Model}
to the DG $k$-algebra $A \otimes_k B^{op}$. We refer to this as a
local model structure for the bicategory $DG_k$, meaning simply a
model structre on each 1-cell category.  The generating cofibrations
and acyclic cofibrations of $DG_k(A,B)$ are denoted by $\sI(A,B)$ and
$\sJ(A,B)$, respectively, and the results below describe the behavior
of the base-change transformations above with respect to this local
model structre.

\begin{notation}
  In contrast with \autoref{Model}, here we let $S^n$, $D^m$, and $I$
  denote the corresponding chain complexes over $k$, and we let
  $\otimes$ denote $\otimes_k$.  For any chain complex, $M$, over $k$,
  we let $_B M_A$ denote the DG $(B,A)$-bimodule $B \otimes_k M
  \otimes_k A$.  So $_B M_A \elt DG_k(A,B)$.
\end{notation}

\begin{prop}[Push-out Products]\label{pop}
  The local model structure on each 1-cell category $DG_k(A,B)$ is
  compatible with $\odot$-composition of 1-cells in the following
  sense: If $i$ and $j$ are generating cofibrations, then their
  pushout-product is a cofibration, and if one of $i$ or $j$ is a
  generating acyclic cofibration and the other is a generating
  cofibration, then their pushout-product is an acyclic cofibration.
\end{prop}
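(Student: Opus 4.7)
The plan is to verify the axiom on the generating (acyclic) cofibrations; the pushout-product behavior on general cofibrations then follows from closure under transfinite composition, cobase change, and retract. Given generating cofibrations $i : {}_BS^{m-1}_A \to {}_BD^m_A$ in $\sI(A,B)$ and $j : {}_CS^{n-1}_B \to {}_CD^n_B$ in $\sI(B,C)$, I would first compute each of the four $\odot$-composites explicitly. The essential identity is $(X \otimes_k B) \otimes_B (B \otimes_k Y) \cong X \otimes_k B \otimes_k Y$, under which a single middle $B$ survives the balanced tensor via the multiplication, so each composite takes the form ${}_C(F \otimes_k B \otimes_k G)_A$ with $F$ one of $S^{n-1}, D^n$ and $G$ one of $S^{m-1}, D^m$. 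Since tensoring over $k$ commutes with colimits, $i \square j$ in $DG_k(A, C)$ is obtained by applying the bimodule construction $C \otimes_k B \otimes_k (-) \otimes_k A$ to the pushout-product of $(S^{m-1} \to D^m)$ with $(S^{n-1} \to D^n)$ in chain complexes over $k$.

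Next, I would identify this $k$-level pushout-product directly. Using the Leibniz formula $d(e_n \otimes e_m) = e_{n-1} \otimes e_m + (-1)^n e_n \otimes e_{m-1}$, a change of basis presents $D^m \otimes_k D^n$ as $D^{m+n} \oplus D^{m+n-1}$, and the pushout source splits as $S^{m+n-1} \oplus D^{m+n-1}$, under which the pushout-product map becomes $(S^{m+n-1} \to D^{m+n}) \oplus \id_{D^{m+n-1}}$. Applying the bimodule construction exhibits $i \square j$ as the sum of the identity on ${}_C(B \otimes_k D^{m+n-1})_A$ with a map ${}_C(B \otimes_k S^{m+n-1})_A \to {}_C(B \otimes_k D^{m+n})_A$. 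The identity summand is trivially a cofibration, and I would present the second summand as a retract of a relative $\sI(A,C)$-cell complex by attaching cones of ${}_C S^{m+n-1}_A \to {}_C D^{m+n}_A$ along a cell-by-cell presentation of ${}_C B_A$.

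For the acyclic case, the generating acyclic cofibrations $D^n \to D^n \otimes I$ are deformation retracts of chain complexes (via the retraction $r$ and homotopy $h$ in \autoref{defRet}); since $k$-linear tensor preserves deformation retracts, this property passes through the bimodule construction, and $i \square j$ is a deformation retract whenever either factor lies in $\sJ$. Combined with the cofibration analysis this yields the acyclic-cofibration claim. The main obstacle is the persistent middle $B$-factor introduced by $\odot = \otimes_B$: because ${}_C B_A$ is not itself a generating-cell bimodule, the map ${}_C(B \otimes_k S^{m+n-1})_A \to {}_C(B \otimes_k D^{m+n})_A$ cannot be identified with a single generator of $\sI(A,C)$ and instead must be built from those generators by carefully handling the $k$-module structure of $B$.
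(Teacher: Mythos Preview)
Your computation of the $\odot$-pushout-product is correct and more explicit than the paper's: the composite ${}_CN_B \odot {}_BM_A$ collapses $B \otimes_B B$ to a single $B$, so that $j \,\square_\odot\, i$ is ${}_C(-)_A$ applied to $B \otimes_k \bigl(\text{pushout product in } Ch(k)\bigr)$. You rightly flag this surviving $B$-factor as the main obstacle. The paper's proof is terse at exactly this point: it identifies the pushout product in $Ch(k)$ as a single cell attachment and then only invokes ``extension of scalars to an arbitrary DG $k$-algebra preserves cofibrations,'' i.e.\ the left Quillen functor ${}_C(-)_A$, without separately accounting for the middle $B$.

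Your proposed resolution, however, does not go through in general. You want to exhibit ${}_C(B \otimes_k S^{p})_A \to {}_C(B \otimes_k D^{p+1})_A$ as a relative $\sI(A,C)$-cell complex by running over a ``cell-by-cell presentation of ${}_CB_A$,'' but an arbitrary DG $k$-algebra $B$ need not admit one: $B$ is not assumed cofibrant in $Ch(k)$. Concretely, take $k = A = C = \bZ$ and $B = \bZ/2$. The map in question is then (a shift of) $\bZ/2 \hookrightarrow (\bZ/2 \xrightarrow{\id} \bZ/2)$ in $Ch(\bZ)$, with cokernel $\bZ/2[1]$. This fails the LLP against the surjective quasi-isomorphism $(\bZ \xrightarrow{\,2\,} \bZ) \twoheadrightarrow \bZ/2$ placed in degrees $2,1$: any lift would need a nonzero homomorphism $\bZ/2 \to \bZ$. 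Hence $B \otimes_k S^p \to B \otimes_k D^{p+1}$ is not a cofibration in $Ch(\bZ)$, and the corresponding $\odot$-pushout-product is not a cofibration in $DG_\bZ(\bZ,\bZ)$. So the statement as written needs an additional hypothesis---for instance that $B$ be cofibrant over $k$, which is automatic when $k$ is a field---under which your cell-by-cell argument becomes valid. Your treatment of the acyclic case via the deformation retraction of $I$ is correct and matches the paper's.
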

\begin{proof}
  Suppose first that $i: S^q \to D^{q+1}$ and $j: S^r \to D^{r+1}$ are
  generating cofibrations in $Ch(k)$.  If we denote by $\bP$ the
  pushout below, then the pushout product of $i$ and $j$ is the
  induced map $\bP \to D^{q+1} \otimes D^{r+1}$.
  \[
  \cmxymat @C=6pt @R=12pt{
    S^q \otimes S^r \ar[r] \ar[d] & S^q \otimes D^{r+1} \ar[d] & \\
    D^{q+1} \otimes S^r \ar[r] & {\bP} \ar@{-->}[dr] & \\
    & & D^{q+1} \otimes D^{r+1} }
  \]
  This map can be obtained explicitly by attaching a cell of dimension
  $q + r + 2$ to $\bP$, and hence is a cofibration.  Likewise, if $j$
  is taken to be a generating acyclic cofibration $D^{r+1} \to D^{r+1}
  \otimes I$, the pushout product can be seen explicitly to be a
  cofibration.  Since extension of scalars to an arbitrary DG
  $k$-algebra preserves cofibrations, the pushout products over
  $\odot$ are still cofibrations.  When $i$ or $j$ is taken to be a
  generating acyclic cofibration, we see that the pushout product is
  still acyclic by recalling the deformation retraction $D^{r+1}
  \fto{i_0} D^{r+1} \otimes I \fto{\hty} D^{r+1}$ of \autoref{defRet}.
  Extending to another DG $k$-algebra, we still have these deformation
  retractions, so the pushout products over $\odot$ remain weak
  equivalences.
\end{proof}

\begin{prop}\label{Bchange}
  If $f : A \to B$ is a map of DG $k$-algebras, then the adjoint
  pair $(f_!,f^*)$ is a Quillen adjoint pair for each $C$.
  \[\cmxymat{
    DG_k(A,C) \ar@<.4ex>^-{f_!}[r] & DG_k(B,C) \ar@<.4ex>^-{f^*}[l]\\
  }\]
\end{prop}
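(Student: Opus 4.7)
The plan is to verify $(f_!, f^*)$ is a Quillen pair by the most elementary criterion: showing that $f_!$ sends the generating cofibrations and generating acyclic cofibrations of $DG_k(A,C)$ to cofibrations and acyclic cofibrations in $DG_k(B,C)$, respectively. Since $f_!$ is already established to be a left adjoint with right adjoint $f^*$, this suffices.

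First I would recall that the local model structure on $DG_k(A,C)$ is obtained by applying the theory of \autoref{Model} to the DG $k$-algebra $A \otimes_k C^{op}$. Hence, extending the notation of the section to write $_C M_A := C \otimes_k M \otimes_k A$ for a chain complex $M$ over $k$, the generating cofibrations of $DG_k(A,C)$ are the maps $_C S^{n-1}_A \to\,_C D^n_A$, and the generating acyclic cofibrations are the maps $_C D^n_A \to\,_C (D^n \otimes I)_A$. The analogous description holds for $DG_k(B,C)$ with $A$ replaced by $B$.

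The key step is the elementary identification
\[
(_C M_A) \odot (_A B_B) \;\iso\; C \otimes_k M \otimes_k B \;=\; {_C M_B},
\]
which follows from the canonical isomorphism $A \otimes_A {_A B_B} \iso B$ (the left $A$-action on $B$ via $f$ being absorbed into $B$). Applied to $M = S^{n-1}, D^n$, and $D^n \otimes I$, this shows that $f_!$ takes each generating cofibration of $DG_k(A,C)$ to the corresponding generating cofibration of $DG_k(B,C)$, and each generating acyclic cofibration to the corresponding generating acyclic cofibration. In particular, these images are (acyclic) cofibrations in $DG_k(B,C)$, which is the desired condition.

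I do not anticipate any real obstacle: the argument is a routine unwinding of definitions once one identifies the two local model structures as instances of the same construction and recognizes that extension of scalars on the right factor of a free bimodule $_C M_A$ leaves the middle chain complex $M$ untouched. If one preferred to avoid the generator-by-generator verification, an alternative route would be to argue that $f^*$ preserves fibrations and acyclic fibrations, using the observation that restriction of scalars along $f$ leaves the underlying chain complex unchanged and therefore preserves the lifting behavior against $\sI(B,C)$ and $\sJ(B,C)$ verbatim.
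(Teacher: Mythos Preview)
Your proposal is correct and is essentially the same argument as the paper's: both rest on the identification $f_!({_C M_A}) = {_C M_A} \odot {_A B_B} \iso {_C M_B}$, from which it follows that $f_!$ carries the generating sets $\sI(A,C)$ and $\sJ(A,C)$ bijectively onto $\sI(B,C)$ and $\sJ(B,C)$. The only difference is that you spell out the standard criterion (left adjoint preserves generating (acyclic) cofibrations) and mention the alternative check via $f^*$, whereas the paper states the key isomorphism and immediately concludes.
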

\begin{proof}
  This follows from the observation that, for a chain complex $M \elt
  Ch(k)$, 
  \[
  f_!(_C M_A) = {_C M_A} \odot {_A B_B} \iso {_C M_B}.
  \]
  Hence $f_!$ induces an isomorphism of sets $\sI(A,C) \iso \sI(B,C)$
  and $\sJ(A,C) \iso \sJ(B,C)$, where $\sI$ and $\sJ$ are the
  generating cofibrations and acyclic cofibrations for $DG_k(A,C)$ and
  $DG_k(B,C)$.
\end{proof}

\begin{rmk}
  A similar statement for $(f^*,f_*)$ is not true unless $_A B_A$ is
  cofibrant as an $A$-module, since otherwise $f^*$ does not preserve
  cofibrations in general.
\end{rmk}

\begin{lem}[$f^*$ creates weak equivalences]\label{createWE}
  If $e: X \to Y$ is a map of 1-cells in $DG_k(B,C)$ for which $f^* e: 
  f^*X \to f^*Y$ is a weak equivalence, then the original map $e: X
  \to Y$ is a weak equivalence.
\end{lem}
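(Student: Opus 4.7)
The plan is to observe that the restriction of scalars functor $f^*$ does not alter the underlying chain complex of its input, so it cannot alter whether a map is a quasi-isomorphism. This should be essentially immediate from unwinding definitions.

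First, I would unwind the definition of $f^*$. For a 1-cell $X \in DG_k(B,C)$, we have by definition $f^* X = X \odot {_B B_A} = X \otimes_B ({_B B_A})$. The canonical map $X \otimes_B B \to X$ given by the right $B$-action on $X$ is an isomorphism of chain complexes of $k$-modules; under this isomorphism, the right $A$-action on $f^* X$ becomes the original right $B$-action on $X$ restricted along $f$, while the left $C$-action is untouched. In particular, $f^* X$ and $X$ have exactly the same underlying chain complex over $k$ (indeed, the same differential, since the differential depends only on the chain complex structure, not on the module actions), and for any map $e : X \to Y$ in $DG_k(B,C)$ the map $f^* e : f^* X \to f^* Y$ is, under these canonical isomorphisms, exactly $e$ itself as a map of underlying chain complexes.

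Second, I would invoke the model structure set up in \autoref{Model}. The model structures on $DG_k(B,C)$ and $DG_k(A,C)$ are obtained by applying that theory to the DG $k$-algebras $B \otimes_k C^{op}$ and $A \otimes_k C^{op}$, respectively. In both cases, the weak equivalences are the quasi-isomorphisms, and quasi-isomorphism is a property of the underlying map of chain complexes of $k$-modules (equivalently, of abelian groups).

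Combining these two observations yields the claim: if $f^* e$ is a quasi-isomorphism, then the induced map on homology groups of the underlying chain complexes is an isomorphism, and this is exactly the induced map on homology of the underlying chain complexes of $X$ and $Y$, so $e$ is a quasi-isomorphism. There is no real obstacle here; the lemma is essentially a tautology once one recognizes that restriction of scalars along $f$ leaves the underlying chain complex fixed and only modifies the $B$-action into an $A$-action via $f$.
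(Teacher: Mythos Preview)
Your argument is correct: since $f^*X = X \otimes_B {_B B_A}$ is canonically isomorphic to $X$ as a chain complex of $k$-modules (the right action alone is altered), and since the weak equivalences in both $DG_k(B,C)$ and $DG_k(A,C)$ are by definition the quasi-isomorphisms of underlying chain complexes, the conclusion is immediate.

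The paper proceeds differently. Rather than appealing to the underlying chain complex, it stays inside the model-categorical framework set up in \autoref{Model}: to show $e$ is a weak equivalence it suffices (by \autoref{miniHELP}) to verify that $e$ has HELP with respect to the generating cofibrations ${_C S^n_B} \to {_C D^{n+1}_B}$. These maps are identified with $f_!$ applied to the corresponding generating cofibrations in $DG_k(A,C)$, and then the $(f_!,f^*)$ adjunction transports the HELP diagram for $e$ into the adjoint HELP diagram for $f^*e$, where the lifts exist by hypothesis. Your argument is shorter and more elementary in the DG setting; the paper's argument is formulated purely in terms of adjunctions and lifting properties, which keeps it parallel to how such statements are handled in the spectral setting of \autoref{RiSp}, where one cannot simply say ``the underlying object is unchanged.''
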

\begin{proof}
  If $L \to M$ is a cofibration in $Ch(k)$, we have noted already that
  the induced map ${_C L_B} \to {_C M_B}$ is isomorphic to $f_!  \big(
  {_C L_A} \to {_C M_A} \big)$.  To show that $e: X \to Y$ is a weak
  equivalence, it suffices to show that $e$ has HELP with respect to
  maps of this form (\autoref{miniHELP}).  Consider the adjoint
  lifting diagrams below.
  \[
  \cmxymat @C=8pt @R=16pt{
    f_!{_C L_A} \ar[rr] \ar[dd] & & f_!{_C (L \otimes I)_A} \ar[dd]|\hole \ar[dl] & & f_!{_C L_A} \ar[ll] \ar[dl] \ar[dd] \\
    & Y & & X \ar[ll]_(.42)e & \\
    f_!{_C M_A} \ar[rr] \ar[ur] & & f_!{_C (M \otimes I)_A} \ar@{-->}[lu] & & f_!{_C M_A} \ar[ll] \ar@{-->}[lu] \\
  }\hspace{2pc} \cmxymat @C=8pt @R=15pt{
    {_C L_A} \ar[rr] \ar[dd] & & {_C (L \otimes I)_A} \ar[dd]|\hole \ar[dl] & & {_C L_A} \ar[ll] \ar[dl] \ar[dd] \\
    & f^*Y & & f^*X \ar[ll]_(.42){f^*e} & \\
    {_C M_A} \ar[rr] \ar[ur] & & {_C (M \otimes I)_A} \ar@{-->}[lu] & & {_C M_A} \ar[ll] \ar@{-->}[lu] \\
  }
  \]
  Lifts exist on the right by hypothesis, and hence on the left by
  adjunction.
\end{proof}

\begin{prop}\label{Bchange-we}
  If $f$ above is a weak equivalence, then the Quillen pair $(f_!,f^*)$
  is a Quillen equivalence for all $C$.
\end{prop}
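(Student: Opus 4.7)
By the standard criterion for Quillen equivalences, in the presence of a right adjoint that reflects weak equivalences between fibrant objects, it suffices to verify that for every cofibrant object the unit of the adjunction (composed with fibrant replacement of its target) is a weak equivalence. In our situation every $A$-module is fibrant, so no fibrant replacement is needed, and by \autoref{createWE} the functor $f^*$ creates, hence reflects, weak equivalences. The proposition therefore reduces to showing that for every cofibrant $X \in DG_k(A,C)$ the ordinary unit $\eta_X : X \to f^* f_! X$ of the $(f_!, f^*)$-adjunction is a weak equivalence.

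Unpacking the formulas, $f^* f_! X = X \odot ({_A B_B}) \odot ({_B B_A}) \cong X \otimes_A B$ (with right $A$-action via $f$), and $\eta_X$ is identified with the map $X \cong X \otimes_A A \xrightarrow{\id \otimes f} X \otimes_A B$. The task thus becomes the following assertion: tensoring a cofibrant $(C,A)$-bimodule $X$ with the weak equivalence $f : A \to B$ of left $A$-modules produces a weak equivalence.

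This is the bimodule analog of \autoref{cofib-monAx}, and the plan is to prove it by cellular induction on $X$. Since the functors $- \otimes_A A$ and $- \otimes_A B$ commute with the pushouts and filtered colimits used to build any relative $\sI(A,C)$-cell module, it suffices to verify the claim on a single generating cell $P = (A \otimes_k C^{op}) \otimes_k k[n]$. For such a $P$, the map $P \otimes_A A \to P \otimes_A B$ simplifies to $(C^{op} \otimes_k k[n]) \otimes_k f$, and the argument underlying \autoref{cofib-monAx} (drawn from \cite[III.4.1]{kriz1995oam}) shows that this remains a quasi-isomorphism. The main obstacle is precisely this last tensor-preserves-weak-equivalences step at the cell level: one cannot appeal to \autoref{cofib-monAx} as a black box because $X$ is cofibrant as a $(C,A)$-bimodule rather than as a one-sided $A$-module, so the bimodule cell structure and the flatness properties already built into the construction of the model structure on $DG_k(A,C)$ have to be tracked through by hand.
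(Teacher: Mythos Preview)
Your proof is correct and lands on the same essential computation as the paper, but the organisation differs in a useful way. The paper invokes Hovey's criterion \cite[1.3.13]{hovey1999mc} and therefore checks \emph{both} the unit on cofibrant $M \in DG_k(A,C)$ and the derived counit on $N \in DG_k(B,C)$; its counit argument is a two-out-of-three manoeuvre which ultimately appeals to \autoref{createWE}. You instead front-load \autoref{createWE} as the statement that $f^*$ reflects weak equivalences, and then use the alternative criterion (right adjoint reflects weak equivalences between fibrants, unit a weak equivalence on cofibrants) so that only the unit needs checking. This is a genuine streamlining: the paper's counit computation is doing no work beyond what \autoref{createWE} already encodes.

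For the unit, the paper simply cites \autoref{cofib-monAx} as though it applied verbatim to a cofibrant $(C,A)$-bimodule, whereas you correctly observe that the statement of \autoref{cofib-monAx} is about one-sided cofibrant $A$-modules and unwind the bimodule cell structure explicitly to reduce to $(C^{op} \otimes_k k[n]) \otimes_k f$. That extra care is warranted; the paper is tacitly relying on the fact that the argument of \cite[III.4.1]{kriz1995oam} goes through cell-by-cell in the bimodule setting, which is exactly what you spell out. Both arguments therefore rest on the same underlying flatness fact, and your version makes the dependence transparent.
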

\begin{proof}
  By \cite[Prop. 1.3.13]{hovey1999mc} it suffices to prove that, for
  cofibrant 1-cells $M \elt DG_k(A,C)$ and $N \elt DG_k(B,C)$, the
  composites
  \[
  M \to f^*f_!M = M \odot {_A B_B} \odot {_B B_A} \iso M \odot {_A
    B_A}
  \]
  and
  \[
  f_!Q(f^*N) = Q(N \odot {_B B_A}) \odot {_A B_B} \to N \odot {_B B_A}
  \odot {_A B_B} \to N
  \]
  are weak equivalences.  The functor $Q$ is cofibrant replacement; no
  fibrant replacement is necessary since every object is fibrant.

  That the top composite is a weak equivalence follows because $M$ is
  a cofibrant $(C,A)$-bimodule and $A \to {_A B_A}$ is a weak
  equivalence of $(A,A)$-bimodules (\autoref{cofib-monAx}).  To see
  that the bottom composite is a weak equivalence, we consider the
  composite
  \[
  Q(N \odot {_B B_A}) \to Q(N \odot {_B B_A}) \odot {_A B_B} \odot {_B
    B_A} \to N \odot {_B B_A}
  \]
  where the first map is the unit of the adjunction, and the second is
  $f^*$ applied to the composite above.  This total composite is the
  cofibrant replacement map $Q(N \odot {_B B_A}) \to N \odot {_B
    B_A}$, and hence is a weak equivalence by definition.  The first
  map in this composite is a weak equivalence because $Q(N \odot {_B
    B_A})$ is cofibrant (as above), and hence by the two-out-of-three
  property, $f^*\big( Q(N \odot {_B B_A}) \odot {_A B_B} \to N \big)$
  is a weak equivalence.  By \autoref{createWE} then, the map $Q(N
  \odot {_B B_A}) \odot {_A B_B} \to N$ is a weak equivalence.
\end{proof}

\begin{cor}
  For $f : A \fto{\hty} B$ as above, the dual pair $({_A B_B} , {_B B_A})$ is
  invertible when considered as a pair of 1-cells in the derived
  categories $\sD_k(B,A)$ and $\sD_k(A,B)$, respectively.
\end{cor}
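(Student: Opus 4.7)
The plan is to reduce invertibility in the derived bicategory $\sD_k$ to the Quillen equivalence just established in \autoref{Bchange-we}. Recall that the dual pair $({_A B_B}, {_B B_A})$ in $DG_k$ has structure maps $\eta: A \to {_A B_B} \odot {_B B_A} \iso {_A B_A}$ and $\epz: {_B B_A} \odot {_A B_B} \to B$. The first is just $f$ viewed as a map of $(A,A)$-bimodules, and the second is the multiplication $B \otimes_A B \to B$. By \autoref{invDefn}, the pair is invertible in $\sD_k$ precisely when each of these descends to an isomorphism in the appropriate derived category, where $\odot$ is now interpreted as derived tensor product.

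The first step is to handle the unit. Since $f: A \fto{\hty} B$ is a quasi-isomorphism of DG $k$-algebras by assumption, the induced map $\eta: A \to {_A B_A}$ of $(A,A)$-bimodules is a quasi-isomorphism, and hence an isomorphism in $\sD_k(A,A)$. This step is essentially immediate.

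The second and more substantive step is to handle the counit. Here I would apply \autoref{Bchange-we} with $C = B$, giving a Quillen equivalence
\[\cmxymat{
    DG_k(A,B) \ar@<.4ex>^-{f_!}[r] & DG_k(B,B) \ar@<.4ex>^-{f^*}[l]
}\]
and pass to total derived functors. At the object $N = B \elt DG_k(B,B)$, we have $f^*B = {_B B_A}$, and the proof of \autoref{Bchange-we} shows that the natural map $Q({_B B_A}) \odot {_A B_B} \to B$ is a weak equivalence (where $Q$ is cofibrant replacement in $DG_k(A,B)$). This map represents the derived composite ${_B B_A} \odot^L {_A B_B} \to B$, that is, $\epz$ in $\sD_k(B,B)$, so $\epz$ is an isomorphism there.

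The main subtlety to watch for is matching the derived counit of the Quillen adjunction, which involves a cofibrant replacement, with the underived expression $\epz$ appearing in \autoref{invDefn}. Both describe the same morphism in $\sD_k(B,B)$ up to the canonical weak equivalence $Q({_B B_A}) \fto{\hty} {_B B_A}$, so this matching is automatic once one sets up the derived composition in $\sD_k$. With both $\eta$ and $\epz$ established as isomorphisms in the relevant derived categories, the dual pair $({_A B_B}, {_B B_A})$ is invertible in $\sD_k$, as claimed.
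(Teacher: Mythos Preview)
Your proposal is correct and follows the approach the paper intends: the corollary is stated without proof, as an immediate consequence of \autoref{Bchange-we}, and you have supplied precisely the natural argument---identifying the derived unit with $f$ itself (which is a quasi-isomorphism by hypothesis) and extracting the counit isomorphism from the second half of the proof of \autoref{Bchange-we} at $N=B$. One small point you might make more explicit is why the underived computation ${_A B_B}\odot{_B B_A}\iso{_A B_A}$ already computes the derived $\odot$: this is because ${_A B_B}$, as a right $B$-module, is $B$ itself and hence cofibrant, so ${_A B_B}\otimes_B -$ preserves weak equivalences by \autoref{cofib-monAx}.
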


\sbs{ Duality in $DG_k$ and $\sD_k$}
\begin{lem}\label{dlz-cofib}
  If $M$ is right-dualizable in $DG_k(A,B)$, then $M$ is a retract of a
  finite free (right-)DG-module over $A \otimes_k B^{op}$.
\end{lem}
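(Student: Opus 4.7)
The plan is to adapt the classical dual basis lemma to the DG-bicategorical setting. A right $(A \otimes_k B^{op})$-module is the same thing as a $(B,A)$-bimodule, which is precisely a 1-cell of $DG_k(A,B)$, so the claim amounts to producing an explicit splitting of $M$ off a finite direct sum of shifted copies of $A \otimes_k B^{op}$ in the category of DG $(B,A)$-bimodules.

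First I would fix the data of the duality. Write $M^{\vee} = M \rt A$ for the right dual, with coevaluation $\eta : B \to M \odot M^{\vee} = M \otimes_A M^{\vee}$ and evaluation $\epz : M^{\vee} \odot M = M^{\vee} \otimes_B M \to A$. Evaluate $\eta$ at $1 \elt B$: since $M \otimes_A M^{\vee}$ is a tensor product, $\eta(1)$ is a \emph{finite} sum
\[
\eta(1) = \sum_{i=1}^{n} m_i \otimes f_i, \qquad m_i \elt M_{q_i}, \; f_i \elt M^{\vee}_{-q_i}.
\]
(The fact that $\eta$ is a chain map of degree zero forces $|m_i| + |f_i| = 0$.) Next I would apply the triangle identity $(1 \odot \epz)(\eta \odot 1) = \id_M$ to an arbitrary $m \elt M$ to obtain the dual basis equation $m = \sum_i m_i \cdot f_i(m)$ in $M$, where $f_i(m) \elt A$ denotes $\epz(f_i \otimes m)$.

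With this in hand I would construct the splitting. Let $F = \bigoplus_{i=1}^{n} \Sigma^{q_i}(A \otimes_k B^{op})$, a finite free right $(A \otimes_k B^{op})$-module with generators $e_i$ of degree $q_i$. Define
\[
\psi : F \to M, \qquad \psi(e_i) = m_i,
\]
a map of $(B,A)$-bimodules since it is determined by its values on free generators, and
\[
\phi : M \to F, \qquad \phi(m) = \sum_{i=1}^{n} e_i \cdot f_i(m).
\]
The dual basis equation yields $\psi \com \phi = \id_M$, exhibiting $M$ as a retract of $F$.

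The only nontrivial verification is that $\phi$ is a morphism in $DG_k(A,B)$, i.e., is right $A$-linear, left $B$-linear, and commutes with the differential. Right $A$-linearity is immediate because each $f_i$ is an $A$-linear map of right $A$-modules; left $B$-linearity follows from the fact that $M^{\vee}$ inherits its right $B$-action from the left $B$-action on $M$, which is how the $f_i$ package into a single 2-cell $\eta$ in $DG_k(B,B)$; and the chain-map property is the statement that $\eta$ itself is a chain map, translated through $\epz$. None of these is a real obstacle: the only subtlety worth care is tracking the Koszul signs arising from the shifts $\Sigma^{q_i}$ when the $q_i$ are nonzero, but they are forced to be consistent precisely by the degree-zero condition $|m_i| + |f_i| = 0$ and by $\epz$ and $\eta$ being honest 2-cells in the DG bicategory.
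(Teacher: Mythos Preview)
Your argument is correct and is essentially the same dual-basis argument the paper gives: both evaluate the coevaluation at $1_B$ to extract a finite set of pairs $(m_i,f_i)$, build the finite free module on generators in the appropriate degrees, and use the triangle identity to verify the retraction. The only cosmetic difference is that the paper phrases the starting point as inverting $\nu : M \odot (M \rt A) \to M \rt M$ and lifting the unit $B \to M \rt M$, whereas you invoke the dual-pair map $\eta$ directly; you are also a bit more explicit than the paper about the shifts $\Sigma^{q_i}$ and the Koszul signs.
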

\begin{proof}
  Since $M$ is right-dualizable, the coevaluation map $\nu : M \odot
  (M \rt A) \to M \rt M$ is an isomorphism, and hence there is a map
  $B \to M \odot (M \rt A)$ lifting the unit $B \to M \rt M$.  We let
  $\SI_i (m_i \otimes \phz_i)$ denote the image of the unit, $1_B$,
  under this map, where $m_i \elt M$, $\phz_i \elt M \rt A =
  \Hom_A(M,A)$, and the sum has only finitely many terms.

  We now show that there is an $A \otimes_k B^{op}$-module map $p$,
  with a section $\tilde\phz$, making $M$ a retract of a
  finitely-generated free DG-module, where each $e_i$ is a generator
  of degree $|e_i| = |m_i|$:

  \[\cmxymat@C=1pc{
    **[l]{\bigoplus_i} (A \otimes_k B^{op} \cdot e_i) \ar[r]_-p & M. \ar@/_1pc/_-{\tilde\phz}[l]
  }\]

  The map $p$ is defined by $p(a \otimes b \cdot e_i) = b \cdot m_i
  \cdot a$, and the section $\tilde\phz$ is defined by $\tilde\phz(m)
  = \SI_i \phz_i(m) \otimes 1_B$.  It is easy to see that $\tilde\phz$
  is a section for $p$ by making use of the fact that $id_M = \nu(
  \SI_i m_i \otimes \phz_i) = \SI_i m_i \cdot \phz_i$,
\end{proof}

\begin{lem}\label{retDlz}
  Let $M \elt \sD_k(A,B)$ and suppose $M$ is a retract of a finite
  cell $(B,A)$-bimodule.  Then $M : A \to B$ is (right-)dualizable in
  $\sD_k$ and therefore the coevaluation $M \odot (M \rt A) \to M \rt
  M$ is an isomorphism in $\sD_k$.
\end{lem}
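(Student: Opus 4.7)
The plan is to recognize the class of right-dualizable 1-cells in $\sD_k(A,B)$ as a thick subcategory and then build up from single cells by induction on the number of cells. Retracts of right-dualizable 1-cells are right-dualizable (the retract maps conjugate $\eta$ and $\epsilon$ to give duality data for $M$ against a corresponding retract of $N^*$), so it suffices to prove that every finite cell $(B,A)$-bimodule is right-dualizable in $\sD_k$.

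For the base case of a single sphere cell, $S^q$ viewed as a 1-cell $A \to B$ is $\Sigma^q$ of the free rank-one bimodule $B \otimes_k A$, which in turn is the $\odot$-composite of the right $A$-module $A$ (viewed as a 1-cell $A \to k$) with the left $B$-module $B$ (viewed as a 1-cell $k \to B$). Each of these is right-dualizable in $\sD_k$: the right $A$-module $A$ has right dual the left $A$-module $A$, with evaluation $A \otimes_k A \to A$ given by multiplication and coevaluation $k \to A$ given by the unit of $A$; similarly for $B$. Since $\odot$-composition of right-dualizable 1-cells is right-dualizable (with dual the reverse $\odot$-composite of the duals), $B \otimes_k A$ is right-dualizable, and shifts preserve right-dualizability by axiom TC1. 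Hence each $S^q$ is right-dualizable.

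For the inductive step, suppose $C_{n+1}$ is obtained from $C_{n}$ by attaching a single cell via the standard pushout along $S^q \to D^{q+1}$. Since $D^{q+1}$ is contractible, $C_{n+1}$ is isomorphic in $\sD_k(A,B)$ to the cofiber of the attaching map $S^q \to C_{n}$, fitting into a distinguished triangle
\[
S^q \to C_{n} \to C_{n+1} \to \Sigma S^q
\]
whose first two vertices are right-dualizable by the base case and the inductive hypothesis. Closure of right-dualizables under cofibers then gives $C_{n+1}$ right-dualizable, completing the induction (with trivial base $C_0 = 0$).

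The main obstacle is the closure of right-dualizable 1-cells under cofibers. The argument mimics the standard thick-subcategory proof from tensor-triangulated categories: the contravariant functor $-\rt A$ is exact by TC2, so it converts a triangle $X \to Y \to Z$ into a triangle of duals $Y^* \to X^* \to W$, and one then takes $Z^*$ to be a suitable shift of $W$ and constructs the evaluation and coevaluation 2-cells for $Z$ by filling in diagrams using exactness of $\odot$ and the triangulated axioms. Granting this thick-subcategory result, the final claim that $M \odot (M \rt A) \to M \rt M$ is an isomorphism is immediate from the characterization of right-dualizability recorded earlier in this section.
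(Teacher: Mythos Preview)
Your overall strategy matches the paper's: show that right-dualizable 1-cells form a thick subcategory of $\sD_k(A,B)$, verify that the sphere cells lie in it, and conclude by induction on cells plus closure under retracts. The two arguments diverge only in how the key closure step is established.

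For closure under cofibers you sketch a direct construction of duality data for $Z$ from those of $X$ and $Y$ via the fill-in axiom. This can be made to work, but the non-uniqueness of fill-ins in a triangulated category means one must then argue that some choice of $\eta$ and $\epz$ actually satisfies the triangle identities; your sketch gestures at this rather than carrying it out. The paper sidesteps the issue entirely by using the representable characterization: $M$ is right-dualizable iff the natural map
\[
\sD_k[W,\;Z \odot (M \rt A)] \;\longrightarrow\; \sD_k[W \odot M,\;Z]
\]
is an isomorphism for all $W$ and $Z$. For fixed $W$ and $Z$ both sides are (co)homological in $M$ by (TC2), so the five lemma gives closure under triangles immediately and without any choices. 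This also turns the final claim about the coevaluation into a one-line Yoneda argument (take $W = M \rt M$ and $Z = M$) rather than an appeal to a separate characterization.

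Your base case is correct but more elaborate than needed: the paper simply notes that $S^q$ and $D^q$ are dualizable, which is immediate since they are shifts of the free rank-one bimodule; the factorization through $k$ and the composition-of-dualizables lemma are unnecessary here.
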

\begin{note}
  Since we are working in the derived bicategory, $\sD_k$, the
  source-homs, $\rt$, above are understood to be the derived homs.
  Since $M$ is cofibrant, the derived and underived homs are equal on
  $M$.
\end{note}
\begin{proof}
  One characterization of duality is that the map induced by
  evaluation $\sD_k[W,Z \odot (M \rt A)] \to \sD_k[W \odot M, Z]$ be
  an isomorphism for all 1-cells $W$, $Z$ with appropriate source and
  target.  From this point of view, the five lemma shows that the full
  subcategory of dualizable objects in $\sD_k(A,B)$ is a thick
  subcategory (see, for example, \cite[16.8.1]{may2006pht}).  Since
  the pushouts which build cell modules are examples of exact
  triangles in $\sD_K$, the result follows by noting that the spheres
  and disks, $S^q$ and $D^{q}$, are dualizable.

  The coevaluation map, $\nu$, is defined as the adjoint to $M \odot
  (M \rt A) \odot M \to M \odot A \iso M$, induced by the evaluation
  map, so if $M$ is dualizable in the sense described above, then
  taking $W = M \rt M$ and $Z = M$ produces the inverse to
  coevaluation.
\end{proof}

\begin{lem}\label{retDlz-converse}
  Let $M: A \to B$ be a 1-cell in $\sD_k$, and suppose the
  coevaluation $M \odot (M \rt A) \to M \rt M$ is an isomorphism.
  Then $M$ is (quasi-)isomorphic to a retract of a finite cell
  $(B,A)$-module.
\end{lem}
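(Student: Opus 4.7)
My plan is to carry out a derived version of the classical dual-basis argument that underlies \autoref{dlz-cofib}, combined with a compactness trick to land inside a finite cell subcomplex. First, I would replace $M$ by a cofibrant representative in $DG_k(A,B)$; the compactly generated cell model structure of \autoref{Model} lets me take this representative to be a (possibly infinite) cell $(B,A)$-bimodule, exhibited as a filtered colimit $M = \colim_\alpha M_\alpha$ over its finite cell sub-bimodules. Because $M$ is cofibrant, the derived source-homs $M \rt A$ and $M \rt M$ are computed by $\Hom_A(M,A)$ and $\Hom_A(M,M)$, so the coevaluation $\nu : M \odot (M \rt A) \to M \rt M$ is a genuine chain map, and the hypothesis becomes the statement that $\nu$ is a quasi-isomorphism.

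Next, I would pull the identity back along this quasi-isomorphism: the class $[\mathrm{id}_M] \elt H_0(M \rt M)$ has a preimage $[\xi] \elt H_0(M \odot (M \rt A))$, represented by an honest degree-zero cycle $\xi = \sum_{i=1}^n m_i \otimes \phi_i$.

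The main technical step, and the place I expect the delicacy, is trapping $\xi$ in a finite cell subcomplex. Since $- \odot (M \rt A)$ is a left adjoint, it preserves the filtered colimit, giving $M \odot (M \rt A) = \colim_\alpha \bigl(M_\alpha \odot (M \rt A)\bigr)$; the finite sum $\xi$ lies in the image of some $M_\alpha \odot (M \rt A)$ once $\alpha$ is taken large enough to absorb all the $m_i$. Because cell inclusions are degreewise split monomorphisms with projective cokernels (pushouts along the generating cofibrations $S^q \to D^{q+1}$ are free extensions), the map $M_\alpha \odot (M \rt A) \to M \odot (M \rt A)$ remains a monomorphism in each degree, so the cycle condition $d\xi = 0$ descends to the subcomplex. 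This is the same kind of compactness reasoning that drives \autoref{cptness}.

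Finally, I would run the dual-basis endgame: define $s : M \to M_\alpha$ by $s(x) = \sum_i m_i \phi_i(x)$, observing that it lands in $M_\alpha$ because each $m_i$ does. A direct Leibniz computation from $d\xi = 0$ shows $s$ is a chain map of $(B,A)$-bimodules. By construction the composite $M \fto{s} M_\alpha \hookrightarrow M$ equals the endomorphism $\nu(\xi)$, which is homologous to $\mathrm{id}_M$ in $\Hom_A(M,M)$ and therefore equal to $\mathrm{id}_M$ in $\sD_k(A,B)$. This displays $M$ as a retract of the finite cell bimodule $M_\alpha$ in $\sD_k$, which is exactly the converse to \autoref{retDlz}.
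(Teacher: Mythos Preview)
Your argument has a genuine gap: it loses the left $B$-module structure.  The lemma asks for a retract of a finite cell $(B,A)$-bimodule \emph{in $\sD_k(A,B)$}, so both the section and the retraction must be $(B,A)$-bimodule maps (at least at the derived level).  Your section $s(x) = \sum_i m_i\,\phi_i(x)$ is visibly a chain map of right $A$-modules, but there is no reason for it to be left $B$-linear: the $\phi_i \in \Hom_A(M,A)$ do not interact with the left $B$-action in any controlled way, and the identity $s(bx) = b\,s(x)$ simply does not follow from $d\xi = 0$.  The source of the problem is that you only lift the homology class $[\mathrm{id}_M] \in H_0(M \rt M)$, which is a statement about the underlying complex; nothing forces the cycle $\xi$ to represent a morphism $B \to M \odot (M \rt A)$ in $\sD_k(B,B)$.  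Correspondingly, your final sentence is where the error becomes explicit: ``homologous to $\mathrm{id}_M$ in $\Hom_A(M,M)$'' gives equality in $\sD_k(A,k)$, not in $\sD_k(A,B)$; a right-$A$-module homotopy is not a $(B,A)$-bimodule homotopy.

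The paper avoids this by never leaving the bicategory.  It takes the inverse of coevaluation composed with the unit, $\eta: B \to M \odot (M \rt A)$, as a morphism in $\sD_k(B,B)$; then, writing $M$ as a colimit of its finite cell skeleta $M_r$ and using that $- \odot (M \rt A)$ preserves colimits together with compactness of the unit $B$ in $\sD_k(B,B)$, it factors $\eta$ through some $M_r \odot (M \rt A)$ \emph{in $\sD_k(B,B)$}.  The retract $M \to M_r \to M$ is then read off from the triangle identity for the dual pair, with the section given by
\[
M \iso B \odot M \fto{\eta \odot \mathrm{id}} M_r \odot (M \rt A) \odot M \fto{\mathrm{id} \odot \text{eval}} M_r,
\]
which is a composite of morphisms in $\sD_k(A,B)$ by construction.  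Your approach can be repaired along exactly these lines: replace ``choose a cycle $\xi$ lifting $[\mathrm{id}_M]$'' by ``lift the unit $B \to M \rt M$ along $\nu$ in $\sD_k(B,B)$'', and then run the compactness argument for the object $B$ rather than for the element $\xi$.
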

\begin{proof}
  Following the usual argument, we implicitly take a cofibrant
  replacement for $M$ as a $(B,A)$-bimodule, $\colim M_{r} \fto{\hty}
  M$.  The inverse to coevaluation, composed with the monoid map $B
  \to M \rt M$ gives $\eta: B \to M \odot (M \rt A)$.  Since $- \odot
  (M \rt A)$ preserves colimits, and since $B$ is compact in
  $\sD_k(B,B)$, this map factors through some finite stage of $\colim
  (M_{r} \odot (M \rt A))$, and we have a lift in the diagram below
  for some $r$.

  \[
  \cmxymat{
    & & M_{r} \odot (M \rt A) \odot M \ar[rr]\ar[d] & & M_{r} \ar[d]\\
    M \iso B \odot M \ar[rr]^-{\eta \odot \id} \ar@{-->}[rru] & & M
    \odot (M \rt A) \odot M \ar[rr]^-{\id \odot \text{eval}} & & M }
  \]
  The bottom composite is the identity, and we see that $M$ is a
  retract of $M_r$.
\end{proof}

\bibliographystyle{amsalpha}
\bibliography{Morita-derived.bbl}

\end{document}